\newcommand{\R}{\mathbb{R}}
\newcommand{\xx}{\mathbf{x}}
\newcommand{\yy}{\mathbf{y}}
\newcommand{\po}{\partial}
\newcommand{\ve}{\varepsilon}
\newcommand{\X}{\times}
\renewcommand{\d}{\delta}
\newcommand{\Div}{{\rm div}\,}
\newcommand{\x}{\mathbf{x}}
\newcommand{\y}{\mathbf{y}}
\newcommand{\m}{\mathbf{m}}
\newcommand{\uu}{\mathbf{u}}
\newcommand{\cO}{{\mathcal O}}
\newcommand{\Sbb}{\mathbb S}
\newcommand{\rhoeta}{[\rho]^\eta}
\newcommand{\rhoveta}{[\rho^{(\ve)}]^\eta}
\newcommand{\rhoneta}{[\rho_n]^\eta}
\newcommand{\rhodeta}{[\rho^{(\delta)}]^\eta}
\theoremstyle{plain}
\newtheorem{theorem}{Theorem}[section]
\newtheorem{corollary}{Corollary}[section]
\newtheorem{lemma}{Lemma}[section]
\newtheorem{proposition}{Proposition}[section]
\theoremstyle{definition}
\theoremstyle{remark}
\newtheorem{remark}{Remark}[section]
\numberwithin{equation}{section}
\author[H. Frid]{Hermano Frid}
 \address{Instituto de Matem\'atica Pura e Aplicada - IMPA\\ Estrada Dona Castorina, 110\\
Rio de Janeiro, RJ, 22460-320, Brazil}
\email{hermano@impa.br}
\thanks{H.~Frid gratefully acknowledges the support from CNPq, through grant proc.~303950/2009-9, and FAPERJ, through grant E-26/103.019/2011.}
\author[D.~Marroquin]{Daniel Marroquin}
\thanks{D.~Marroquin thankfully acknowledges the support from CNPq, through grant proc. 150118/2018-0.}
\address{Instituto de Matem\'{a}tica - Universidade Federal do Rio de Janeiro\\
Av. Athos da Silveira Ramos, 149, Cidade Universit\'{a}ria, Rio de Janeiro, RJ, 21945-970, Brazil}
\email{marroquin@im.ufrj.br}
\author[J.F.C.~Nariyoshi]{Jo\~ao F.C.~Nariyoshi}
 \address{Instituto de Matem\'atica Pura e Aplicada - IMPA\\ Estrada Dona Castorina, 110\\
Rio de Janeiro, RJ, 22460-320, Brazil}
\thanks{J. F. C.~Nariyoshi appreciatively acknowledges the support from CNPq, through grant proc. 140600/2017-5.}
\email{jfcn@impa.br}
\title[Effective viscous flux and compressible Navier-Stokes]
{On Hodge decomposition, effective viscous flux and compressible Navier-Stokes}
\subjclass[2010]{35Q35, 76N06, 76N10} %Secondary: 26B20, 35F31, 35B51}
\keywords{Compressible Navier-Stokes equations, Effective visous flux, Helmholtz decomposition}
\begin{document}

\begin{abstract}
It has been known, since the pioneering works by Serre, Hoff, Va\u \i gant-Kazhikhov, Lions and Feireisl, among others, the regularizing properties of the effective viscous flux and its characterization as the function whose gradient is the gradient part in the Hodge decomposition of the Newtonian force of the  fluid, when the shear viscosity of the fluid is constant. In this article, we 
explore further the connection between the Hodge decomposition of the Newtonian force and the regularizing properties of its gradient part, by addressing the problem of the global existence of weak solutions for compressible Navier-Stokes equations with both viscosities depending on a spatial mollification of the density. 
\end{abstract}

\maketitle

\section{introduction}

The dynamics of a viscous compressible fluid are modeled by the well known Navier-Stokes equations. For a barotropic fluid, the Navier-Stokes equations may be written as
\begin{align}
&\partial_t \rho + \Div(\rho \uu) = 0, \label{1.eq.continuity} \\
&\partial_t(\rho \uu) + \Div (\rho \uu \otimes \uu) + \nabla P = \Div \Sbb,\label{1.eq.momentum}
\end{align}
where $\rho$ and $\uu$ are the density and the velocity field of the fluid, respectively, $P=P(\rho)$ is the pressure, $\Sbb$ is the viscous stress tensor. Note that we are neglecting possible external forces for simplicity.

Let $T>0$ be fixed. Throughout this paper we consider equations \eqref{1.eq.continuity}-\eqref{1.eq.momentum} posed on a smooth bounded open set $\Omega\subseteq \mathbb{R}^N$, with $N\ge 2$, along with the following initial and bounndary conditions:
\begin{align}
&\uu(t,x)=0, & x\in\po\Omega, 0<t<T,\label{1.eq.boundarydata}\\
&\rho(0,x)=\rho_0(x), \rho(0,x)\uu(0,x)=m_0(x), &x\in \Omega.\label{1.eq.initialconds}
\end{align}

Moreover, we assume that the fluid is isentropic and satisfies
\begin{equation}\label{1.eq.pressure}
P(\rho)=A\rho^\gamma,
\end{equation}
for some constants $A>0$ and $\gamma > N/2$. Let us consider the case of a Newtonian fluid, where the viscous stress tensor takes the form
\begin{equation}\label{1.eq.VST}
\Sbb = \lambda\, (\Div \uu) \mathbb{I} + 2\mu\, \mathbb{D}(\uu),
\end{equation}
where $\mathbb{I}$ is the identity matrix in $\mathbb{R}^N$, $\mathbb{D}(\uu)=\frac{1}{2}(\nabla\uu+(\nabla\uu)^\top)$ is the symmetric part of the velocity gradient and $\lambda$ and $\mu$ are the viscosity coefficients which, in general, depend on the density.

We also assume that, in the viscous stress tensor, given by \eqref{1.eq.VST},  the viscosity coefficients are functions of a mollification of the density. More precisely, we denote $\rhoeta=\eta\star \tilde{\rho}$, where $\tilde{\rho}$ is the extension of $\rho$ by $0$ to the whole $\mathbb{R}^N$ and $\eta$ is a smooth function with compact support, and we assume that
\begin{equation}\label{1.eq.viscosities1}
\lambda=\lambda(\rhoeta),\qquad \mu=\mu(\rhoeta),
\end{equation}
where $\zeta\mapsto\lambda(\zeta)$ and $\zeta\mapsto\mu(\zeta)$ are smooth functions satisfying 
\begin{equation}\label{1.eq.viscosities2}
\mu(\zeta)\geq \mu_0 >0,\text{ and } \lambda(\zeta)+\frac{2}{N}\mu(\zeta)\ge 0,\qquad \text{for all }\zeta \in \mathbb{R}.
\end{equation}
Note that if $\eta$ is nonnegative, then we only need to assume \eqref{1.eq.viscosities2} for $\zeta\geq 0$. 

Under these hypotheses, the main purpose of this paper is to prove the following result. 
\begin{theorem} \label{T:1.1}
  Let the initial data satisfy
\begin{equation} \label{1.eq.initialdata}
  \begin{cases}
    \rho_0 \geq 0  \text{ and } \rho_0 \in L^\gamma(\Omega), \\
    \m_0(\x) = 0 \text{ whenever } \rho_0(\x) = 0, \text{ and } \frac{|\m_0|^2}{\rho_0} \in L^1(\Omega). 
  \end{cases}
\end{equation} 
  Then, there exists a weak energy solution $(\rho,\rho\uu)$ to the initial/boundary value problem for the Navier--Stokes equations \eqref{1.eq.continuity}--\eqref{1.eq.momentum} under the conditions expressed in \eqref{1.eq.boundarydata}--\eqref{1.eq.viscosities2}. Furthermore, $(\rho, \rho\uu)$ is a renormalized solution to the continuity equation \eqref{1.eq.continuity}. 
  \end{theorem}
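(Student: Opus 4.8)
The plan is to construct the solution by a multi-level approximation scheme in the spirit of Lions and Feireisl, adapted to accommodate the density-mollified viscosities. First I would regularize the continuity equation by adding an artificial viscosity term $\ve \Delta \rho$ and a pressure regularization $\delta \rho^\beta$ (with $\beta$ large) to the momentum equation, and solve the resulting system on a Galerkin basis in the velocity. The crucial structural observation is that since $\lambda$ and $\mu$ depend only on $\rhoeta = \eta \star \tilde\rho$, which is a \emph{smooth, bounded} function of $x$ whenever $\rho \in L^1$ (indeed $\|\rhoeta\|_{C^k} \lesssim_{k,\eta} \|\rho\|_{L^1}$), the viscosity coefficients are themselves smooth in $(t,x)$ with bounds controlled solely by the mass $\int_\Omega \rho\,dx$, which is conserved. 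Thus on each approximation level the momentum equation is a linear elliptic system in $\uu$ with smooth, uniformly elliptic coefficients (ellipticity from \eqref{1.eq.viscosities2}), and the Galerkin and $\ve,\delta$ limits can be carried out much as in the constant-viscosity theory, yielding the standard energy inequality
\begin{equation*}
\sup_{t\in[0,T]}\int_\Omega \Bigl(\tfrac12 \rho|\uu|^2 + \tfrac{A}{\gamma-1}\rho^\gamma\Bigr)\,dx + \int_0^T\!\!\int_\Omega \bigl(2\mu |\mathbb D(\uu)|^2 + \lambda (\Div\uu)^2\bigr)\,dx\,dt \le \int_\Omega \Bigl(\tfrac{|\m_0|^2}{2\rho_0} + \tfrac{A}{\gamma-1}\rho_0^\gamma\Bigr)\,dx .
\end{equation*}

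The heart of the matter is passing to the limit in the pressure term $P(\rho)=A\rho^\gamma$, for which one needs strong convergence of the density, and here the effective viscous flux identity — the theme advertised in the abstract — does the work. With variable viscosities the natural quantity is not $(\lambda+2\mu)\Div\uu - P$ but rather the object obtained from the Hodge/Helmholtz decomposition of the Newtonian force $\Div\Sbb - \nabla P$: writing $\Sbb = \lambda(\rhoeta)(\Div\uu)\mathbb I + 2\mu(\rhoeta)\mathbb D(\uu)$, one decomposes $\Div\Sbb$ into gradient and divergence-free parts and identifies the scalar potential $G$ of the gradient part (the "effective viscous flux"). The key step is to prove the weak continuity of the commutator-type quantity $\overline{P\,\Div\uu} - \overline{P}\,\Div\uu$ against the density, i.e. a relation of the form $\overline{P\rho} - \overline P\,\bar\rho = $ (something controlled by the effective flux), exploiting that $\mu(\rhoeta_n)\to\mu(\rhoeta)$ \emph{strongly} because $\rhoeta_n\to\rhoeta$ strongly (mollification gains compactness for free from $L^1$ weak-$*$ convergence of $\rho_n$ plus tightness). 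This strong convergence of the coefficients is precisely what removes the obstruction that normally forces one to work with the specific combination $\lambda+2\mu$; it lets the Div–Curl lemma close the argument on the commutator between the pressure and the Riesz-transform structure of the Hodge projection.

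From the effective-viscous-flux relation one derives, as usual, monotonicity of the limit: defining the defect measure $\overline{\rho^\gamma} - \bar\rho^\gamma \ge 0$ and using the renormalized continuity equation for $\bar\rho$ (valid since $\bar\rho\in L^2$ by the $\gamma>N/2$ bound after the first level, via the Bogovskii-operator pressure estimate improving integrability of $\rho$ to $L^{\gamma+\theta}_{\mathrm{loc}}$), one shows the defect satisfies a transport inequality forcing it to vanish, hence $\rho_n\to\rho$ strongly in $L^1$ and $P(\rho_n)\to P(\rho)$. The renormalization property of the limit solution then follows from the Diperna–Lions regularization-commutator lemma applied to $\rho\in L^\infty_tL^\gamma_x$ with $\uu\in L^2_tH^1_x$. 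The main obstacle I anticipate is bookkeeping the effective viscous flux identity correctly when the coefficients are genuinely $x$-dependent: one must check that the extra terms generated by $\nabla\mu(\rhoeta)$ and $\nabla\lambda(\rhoeta)$ — which do not appear in the classical computation — are compact (they are, since $\nabla\rhoeta$ is bounded in every $L^p$ by the mass and $\rho_n\to\rho$ weakly), and that the Hodge decomposition is performed with the correct boundary conditions so that the scalar potential $G$ lies in a space compatible with the Sobolev embedding needed to run the Div–Curl argument. Once that is in place, the remaining limits are routine modifications of the Lions–Feireisl machinery.
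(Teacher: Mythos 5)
Your proposal follows essentially the same route as the paper: the Feireisl-style multi-level scheme (Galerkin, $\ve$-artificial viscosity, $\delta$-artificial pressure), the key compactness observation that mollification turns weak convergence of $\rho_n$ into strong $C([0,T];C^\infty(\overline\Omega))$ convergence of $\mu([\rho_n]^\eta)$ and $\lambda([\rho_n]^\eta)$, the resulting weak-continuity identity for the effective viscous flux obtained from the Hodge decomposition of $\Div\Sbb$, and the standard defect-measure/renormalized-solution argument for strong convergence of the densities. One refinement on the obstacle you flag: the extra commutator term $[\mu(\rhoeta),\nabla\Delta^{-1}\nabla]:\mathbb D(\uu)$ is not disposed of merely because $\nabla\rhoeta$ is bounded; the paper invokes the Coifman--Meyer commutator estimate to show this term is bounded (and converges weakly) in $L^2(0,T;W^{1,q}(\R^N))$ for $1<q<2$, which paired with compactness of $B(\rho_n)F([\rho_n]^\eta)$ in $C([0,T];W^{-1,p})$ is what actually lets one pass to the limit.
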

  
  \begin{remark}
  A pair $(\rho,\rho\uu)$ is called a \textit{weak energy solution} to \eqref{1.eq.continuity}-\eqref{1.eq.momentum}, \eqref{1.eq.boundarydata}-\eqref{1.eq.initialconds} if
  \begin{equation*}
    \begin{cases}
      \rho \text{ is nonnegative and} \in L^\infty(0,T;L^\gamma(\Omega)) , \\
      \uu \in L^2(0,T;H_0^1(\Omega;\R^N)),
    \end{cases}
  \end{equation*}
  satisfy \eqref{1.eq.continuity}-\eqref{1.eq.momentum} in the sense of distributions and the following energy inequality holds: for almost every $0<t<T$,
  \begin{multline}\label{1.energy}
  \int_\Omega \Big[ \frac{1}{2} \rho(t,\xx) |\uu (t,\xx)|^2 + \frac{A}{\gamma-1} \rho(t,\xx)^\gamma \Big] \, d\xx \\
  + \int_0^t\int_\Omega \Big[ \mu | \nabla \uu(t',\xx)|^2 + (\lambda + \mu) (\Div \uu(t',\xx))^2  \Big] \, d\xx\, dt'  \\
  \quad\quad\leq \int_\Omega \Big[ \frac{1}{2} |\m_0(\xx)|^2 + \frac{A}{\gamma-1} \rho_0(\xx)^\gamma \Big] \, d\xx \stackrel{\text{def}}{=} E_0. 
  \end{multline}
  \end{remark}
  
  \begin{remark}
  The definition of \textit{renormalized solution} is as follows: $(\rho,\uu)$ is said to be a renormalized solution to the continuity equation \eqref{1.eq.continuity}, if the equation
  \begin{equation*}   
   B(\rho)_t + \Div (B(\rho)\uu) + b(\rho)\Div \uu = 0
  \end{equation*}
 holds in the sense of distributions in $(0,T)\X\R^N$, provided that $\rho$ and $\uu$ are extended to be zero outside of $\Omega$, for any functions
\begin{equation}\label{1.eq.renorm1}
B \in C[0,\infty)\cap C^1(0,\infty),\quad b\in C[0,\infty)\text{ bounded in } [0,\infty),\quad B(0)=b(0)=0,
\end{equation}
satisfying 
\begin{equation}\label{1.eq.renorm2}
b(z)=B'(z)z-B(z).
\end{equation}
\end{remark}

\bigskip

In many cases, it is often convenient to further decompose (the divergence of) the viscous stress tensor in terms of the Hodge decomposition to obtain
\begin{equation}\label{1.eq.momentum1}
\partial_t(\rho \uu) + \Div (\rho \uu \otimes \uu) + \nabla q =  \mathbf{w},
\end{equation}
where
\begin{equation}\label{1.eq.EVF0}
q:=P-\Div\Delta^{-1}\Div\Sbb,
\end{equation}
and $\mathbf{w} :=\Div \Sbb - \nabla(\Div\Delta^{-1}\Div\Sbb)$, which satisfies $\Div \mathbf{w} =0$. Here, the operator $\Div\Delta^{-1}\Div$ may be regarded through its components in terms of the Riesz transform as
\[
\Div\Delta^{-1}\Div\Sbb = (\nabla\Delta^{-1}\nabla):\mathbb{S} = \sum_{i,j=1}^N R_iR_j\Sbb_{ij},
\]
where, $R_i$ is the operator whose Fourier symbol is $-i\frac{\xi_i}{|\xi|}$ (see Appendix \ref{ApdxC}). The function $q$ is often referred to in \cite{F} as the {\sl effective viscous pressure} (see, e.g., \cite{F}). 

 Then, we note that $\Div\Delta^{-1}\Div\mathbb{D}(\uu)= \Div\uu$ and so
\begin{multline}\label{1.eq.EVF}
q=P-(\lambda + 2\mu )\Div \uu + 2 [\mu, \nabla\Delta^{-1}\nabla]:\mathbb{D}(\uu)\\
=P-(\lambda + 2\mu )\Div \uu + 2 \sum_{i,j=1}^N [\mu;R_iR_j]\mathbb{D}_{ij}( \uu),
\end{multline}
where $[b;R_iR_j ](f_{ij}) = b R_iR_j(f_{ij}) - R_iR_j(bf_{ij})$ is the
Lie bracket.

Note that if the shear viscosity $\mu$ is constant, then 
$$
q=q_0 := P - (\lambda+2\mu)\Div \uu, 
$$
which is  usually called the {\sl effective viscous flux}. 

It has been known, since the pioneering works by Serre \cite{S}, Hoff \cite{H1,H2}, Va\u \i gant-Kazhikhov \cite{VK}, Lions \cite{L2} and Feireisl \cite{F}, among others, the regularizing properties of the effective viscous flux $q_0$, when the shear viscosity is constant.

Reasoning as in \cite{F2}, applying the operator $\Div\Delta^{-1}$ to equation \eqref{1.eq.momentum1} we see that
\[
q=-\Div\Delta^{-1}\partial_t(\rho\uu) - \Div\Delta^{-1}\Div(\rho\uu\otimes\uu),
\]
and thus, formally, multiplying by $\rho$ and using the continuity equation, it holds
\begin{multline}\label{1.eq.EVFrho}
q\rho = -\partial_t\big(\Div\Delta^{-1}[\rho\uu]\rho \big) - \Div\big( \rho\uu\, \Div\Delta^{-1}[\rho\uu] \big) \\
+\rho\uu\, \Div\Delta^{-1}\Div [\rho\uu]- \Div\Delta^{-1}\Div[\rho\uu\otimes\uu]\,\rho.
\end{multline}
Now, the right-hand side of this identity can be shown to be weakly continuous in the following sense. Let $(\rho^\ve,\rho^\ve\uu^\ve)$ be a sequence of solutions of the Navier-Stokes equations \eqref{1.eq.continuity}-\eqref{1.eq.momentum} (or a convenient approximation of them) such that $\rho^\ve\to \rho$, $\rho^\ve\uu^\ve\to \rho\uu$ and $\nabla\uu^\ve\to\nabla\uu$ weakly in a suitable $L^p$ space, related to the natural a priori estimates. 
If the viscosity coefficients are constant, it can been shown that $(\rho,\rho\uu)$ is a weak solution of the Navier-Stokes equations with $P=\overline{P(\rho)}$, where the overline stands for a weak limit of the sequence indexed by $\ve$. Then, considering identity \eqref{1.eq.EVFrho} for both $(\rho^\ve,\rho^\ve\uu^\ve)$ and $(\rho,\rho\uu)$, each of the terms on the right-hand side converges weakly to its counterpart as $\ve\to 0$. Thus, it follows that
\begin{equation}\label{1.eq.EVF-Lions}
\overline{\Big(P(\rho)-(\lambda + 2\mu )\Div \uu\Big)\rho} =\Big(\overline{P(\rho)} +(\lambda+2\mu)\Div\uu\Big)\rho.
\end{equation}
This remarkable identity, first discovered by P.-L.~Lions (see \cite{L2}), is the key point to show existence of global weak solutions for the Navier-Stokes equations. The main issue is to show that the sequence of densities converges strongly in order to account for the nonlinearity of the pressure term, that is, to show that $\overline{P(\rho)}=P(\rho)$. 

Note that, if the viscosity coefficients are functions of $\rho$, then the (formal) reasoning outlined above does not yield identity \eqref{1.eq.EVF-Lions}, as weak convergence, in general, does not commute with products.  Also, the presence of the commutator in identity \eqref{1.eq.EVF} requires careful analysis. On the bright side, the regularity results by Coifman and Meyer \cite{CM} and Coifman, Rochberg and Weiss \cite{CRW} regarding commutators of Riesz transforms and the operators of multiplication shed some light into the possibility of extending these results to more general settings.
It is the purpose of this paper to further explore these ideas  for non-constant viscosities satisfying \eqref{1.eq.viscosities1}-\eqref{1.eq.viscosities2}  to prove Theorem~\ref{T:1.1}.

The problem of existence of global solutions to the compressible Navier-Stokes equations with  viscosity coefficients depending on the density is a difficult problem, specially in dimensions greater than one. The general theory developed by Lions in \cite{L2}, and later extended by Feireisl, Novotn\'{y} and Petzeltov\'{a} in \cite{FNP} and by Feireisl \cite{F}, who extended the range of the exponent of the pressure law to the optimal value, relies on a certain continuity with respect to weak convergence of the effective viscous flux \eqref{1.eq.EVF-Lions}, which, so far, for space dimension $\ge3$,  has only been proved in the case of constant viscosity coefficients. In dimension $2$, Va\u \i gant and Kazhikhov \cite{VK} studied the periodic case under the assumptions that $\mu$ is constant and $\lambda(\rho)=\rho^\beta$, with $\beta>3$. Of course, in this case the term involving the commutator in identity \eqref{1.eq.EVF} equals zero and  so $q=q_0$.
Their result relies heavily on the regularity of the effective viscous flux and its identification with the function whose gradient is the gradient part of the Hodge decomposition of the Newtonian force of the fluid, which corresponds to \eqref{1.eq.EVF0}. One great advantage of this decomposition in dimension $2$ is that the divergence-free part of the Hodge decomposition $w$ may be written as $\nabla^\perp G$, for some function $G$, which allows for the deduction of higher order regularity estimates on the solutions when combined with the periodic boundary conditions. 

A significant breakthrough on the construction of weak solutions with density dependent viscosities has been made by Bresch and Desjardins in a series of papers \cite{BD1,BD2,BD3,BD4,BD5} and by Bresch, Desjardins and Lin \cite{BDL}. In general, when the viscosity coefficients depend on the density, the equations may become degenerate when close to the vacuum and, in particular, the velocity field is not bounded in $L^2(0,T;H^1(\Omega))$. However, this degeneracy provides a very particular structure that yields some integrability properties of the gradient of the density, under some restrictions that relate the viscosity coefficients. Namely the relation
\[
\lambda(\zeta)=2(\zeta\mu'(\zeta)-\mu(\zeta)),
\]	 
known as the Bresch-Desjardins relation, which was introduced in \cite{BD4}. In \cite{VY}, Vasseur and Yu \cite{MV} proved existence of solutions for the compressible barotropic Navier–Stokes equations when $\mu(\rho)=\rho$ and $\lambda=0$ using a stability result by Mellet and Vasseur \cite{MV}. Independently and using different methods, Li and Xin \cite{LX} also established existence of solutions for the compressible barotropic Navier–Stokes equations with density dependent viscosities covering the case considered by Vasseur and Yu, as well as more general viscosities satisfying the
Bresch-Desjardins relation, but with a non-symmetric stress diffusion ($\mathbb{S}=\mu(\rho)\nabla\uu+ \lambda(\rho)\Div\uu$) and several extra restrictions on the viscosities and the pressure law. The case of the heat-conductive fluids has been treated by Bresch and Desjardins \cite{BD3}. More recently, Bresch, Vasseur and Yu \cite{BVY} extended the previous results by Vasseur and Yu and by Li and Xin to more general assumptions on the viscosities, satisfying the Bresch-Desjardins relation, including, in particular, the case when $\mu(\rho)=\mu_0\rho^\alpha$ with $\mu_0>0$ and $2/3<\alpha<4$. All of these results, regarding non-constant viscosity coefficients, have been posed either in $\mathbb{R}^3$ or in the torus $\mathbb{T}^3$ and they all rely on the higher regularity on the density allowed by the degeneracy of the viscosity.  None of them, however, uses the regularity of the effective viscous flux discovered by Lions, as it is only available for the case of constant viscosities so far.

The core result of this paper is  the extension of the weak continuity of the effective viscous flux, first proved by Lions~\cite{L2}, to the case where the viscosities may depend on  local spatial averages of the density as  in \eqref{1.eq.viscosities1}. Then, we  apply it to prove Theorem~\ref{T:1.1}. 
The strategy of the proof of Theorem~\ref{T:1.1} follows the same lines  of Feireisl in \cite{F}. We find solutions of the Navier-Stokes equations as weak limits of a sequence of solutions of a regularized system, where the main difficulty is to show strong convergence of the densities in order to account for the pressure term. At this point, aside from the notion of renormalized solutions, the key tool, and main result of this paper, is an identity which extends \eqref{1.eq.EVF-Lions} to the case we consider here (see Theorem~\ref{2.thm.EVF} below). Namely,
\begin{multline}\label{1.eq.EVF-identity}
\overline{\left(P(\rho)-\big( \lambda(\rhoeta) + 2\mu(\rhoeta)\big)\Div \uu\right)\rho} \\=\left(\overline{P(\rho)} +\big( \lambda(\rhoeta) + 2\mu(\rhoeta)\big)\Div\uu\right)\rho.
\end{multline}
This identity is  crucial to prove that the limit functions $(\rho,\uu)$ solve the continuity equation in the sense of renormalized solutions. This last bit of information may be used in order to prove the strong convergence of the densities. However, at this last stage we also need the following identity regarding the effective viscous flux
\begin{equation}\label{1.eq.EVF-identity2}
\overline{\left(\frac{P(\rho)}{\lambda(\rhoeta) + 2\mu(\rhoeta)}-\Div \uu\right)\rho} =\left(\frac{\overline{P(\rho)}}{\lambda(\rhoeta) + 2\mu(\rhoeta)} +\Div\uu\right)\rho.
\end{equation}
Note that when the viscosity coefficients are constant both \eqref{1.eq.EVF-identity} and \eqref{1.eq.EVF-identity2} reduce to \eqref{1.eq.EVF-Lions}.

The rest of this paper is organized as follows. In Section~\ref{S.EVF} we state precisely and prove the main ingredient for the proof   of Theorem~\ref{T:1.1}, that is,  Theorem~\ref{2.thm.EVF}, which is about the weak continuity of the effective viscous flux. Let us point that the key to the proof of this result is the realization that the effective viscous pressure may be written as \eqref{1.eq.EVF}. 
Then, as an application of Theorem~\ref{2.thm.EVF}, we prove Theorem~\ref{T:1.1} by adapting the framework contained in \cite{F}. More precisely, in Section~\ref{S.regularized} we introduce a two-level regularization of the Navier-Stokes depending on two small parameters $\ve$ and $\delta$, which correspond to an artificial viscosity added to the continuity equation and an artificial pressure term, respectively. We discuss the solvability of the regularized system and state some a priori estimates on the solutions. In Section~\ref{S.veto0} we show convergence of the solutions as the artificial viscosity tends to $0$, by means of a variant of Theorem~\ref{2.thm.EVF}. Finally, in Section~\ref{S.dto0} we show convergence of the solutions as the artificial pressure vanishes using Theorem~\ref{2.thm.EVF} once again, which is the last step of the proof of Theorem~\ref{T:1.1}.

\section{The effective viscous flux}\label{S.EVF}

Let us consider a sequence $(\rho_n,\uu_n)$ of weak energy solutions of \eqref{1.eq.continuity}-\eqref{1.eq.momentum} with $P$ given by \eqref{1.eq.pressure}, $\mathbb{S}$ given by \eqref{1.eq.VST} and whose viscosity coefficients satisfy \eqref{1.eq.viscosities1} and \eqref{1.eq.viscosities2}. Let us also assume that $\rho_n$ and $\uu_n$ satisfy the continuity equation \eqref{1.eq.continuity} in the sense of renormalized solutions and that there are some function $\rho$, $\uu$ and $\overline{P}$ such that
\begin{equation}\label{2.convergen}
\begin{cases}
\rho_n\to \rho &\text{weakly--}\star\text{ in }L^{\infty}(0,T;L^\gamma(\Omega)),\\
\uu_n\to\uu &\text{weakly in }L^2(0,T;H_0^1(\Omega)),\\
P(\rho_n) \to \overline{P} &\text{weakly in } L^r((0,T)\times\Omega), \text{ for some $r>1$},
\end{cases}
\end{equation}
as $n\to\infty$. Moreover, let us assume that 
\begin{equation}\label{2.rhoun2}
\rho_n|\uu_n|^2 \ \text{is bounded in }L^\infty(0,T;L^1(\Omega)).
\end{equation}

Then, we have the following result, which establishes the announced extension of  \eqref{1.eq.EVF-identity}, and represents a decisive step in the proof of  the weak continuity of the effective viscous flux.

\begin{theorem}\label{2.thm.EVF}
Let $\gamma>\frac{N}{2}$. Then, passing to a subsequence if necessary  
\begin{multline}\label{2.eq.thmEFV}
      \int_0^T \int_\Omega \varphi B(\rho_n) \Big[ \frac{P(\rho_n)}{2\mu([\rho_n]^\eta) + \lambda([\rho_n]^\eta)} - \Div \uu_n \Big] \, d\xx dt\\
               \rightarrow \int_0^T \int_\Omega \varphi \overline{B(\rho)} \Big[ \frac{\overline{P}}{2\mu([\rho]^\eta) + \lambda([\rho]^\eta)} - \Div \uu \Big] \, d\xx dt,
    \end{multline}        
for any $\varphi\in C_c^\infty((0,T)\times\Omega)$,  and any bounded and continuous function $B$, where $B(\rho_n)\to \overline{B(\rho)}$ weakly-$\star$ in $L^\infty((0,T)\times\Omega)$. Likewise, we also have that
\begin{multline}\label{2.eq.thmEFV2}
      \int_0^T \int_\Omega \varphi B(\rho_n) \Big[ P(\rho_n) - \Big(2\mu([\rho_n]^\eta) + \lambda([\rho_n]^\eta)\Big)\Div \uu_n \Big] \, d\xx dt\\
               \rightarrow \int_0^T \int_\Omega \varphi \overline{B(\rho)} \Big[ \overline{P} - \Big(2\mu([\rho]^\eta) + \lambda([\rho]^\eta)\Big)\Div \uu \Big] \, d\xx dt,
    \end{multline} 
\end{theorem}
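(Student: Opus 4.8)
\emph{Approach.} The plan is to run the Lions--Feireisl argument for the effective viscous flux, isolating the contributions of the variable viscosity. I would first record the consequences of the hypotheses. Since $\rho_n$ is bounded in $L^\infty(0,T;L^\gamma(\Omega))$ and, by \eqref{1.eq.continuity}, $\partial_t\rho_n$ is bounded in a negative Sobolev space, an Aubin--Lions argument gives $\rho_n\to\rho$ in $C([0,T];L^\gamma_{\mathrm{weak}}(\Omega))$; in particular $[\rho_n]^\eta=\eta\star\tilde\rho_n$ converges for every $(t,\xx)$, and since $[\rho_n]^\eta$ together with all its spatial derivatives is bounded in $L^\infty(0,T;L^\infty(\mathbb{R}^N))$, one gets $[\rho_n]^\eta\to[\rho]^\eta$ locally uniformly together with all derivatives, with uniform bounds. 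Hence $\mu_n:=\mu([\rho_n]^\eta)\to\mu_\infty:=\mu([\rho]^\eta)$ and $\lambda_n:=\lambda([\rho_n]^\eta)\to\lambda_\infty:=\lambda([\rho]^\eta)$ locally uniformly with uniformly bounded derivatives of every order; consequently $2\mu_n+\lambda_n$ is bounded below away from $0$ (by \eqref{1.eq.viscosities2}), $\mathbb{S}_n\rightharpoonup\mathbb{S}_\infty:=\lambda_\infty(\Div\uu)\mathbb{I}+2\mu_\infty\mathbb{D}(\uu)$ weakly in $L^2$, and $(\rho,\uu)$ solves \eqref{1.eq.momentum} with $P$ replaced by $\overline P$ and $\mathbb{S}$ by $\mathbb{S}_\infty$ (using also that $\rho_n\uu_n\to\rho\uu$ strongly in $C([0,T];W^{-1,q}(\Omega))$ for a suitable $q>1$, from \eqref{1.eq.momentum}). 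Using the renormalized continuity equation for $(\rho_n,\uu_n)$ to bound $\partial_t B(\rho_n)$ in $L^2(0,T;H^{-1}(\Omega))$, together with Rellich's theorem, one also gets $B(\rho_n)\to\overline{B(\rho)}$ strongly in $C([0,T];H^{-1}(\Omega))$ for every bounded continuous $B$. Finally, it is enough to prove \eqref{2.eq.thmEFV2}: since $2\mu_n+\lambda_n\to 2\mu_\infty+\lambda_\infty$ strongly and is bounded above and below, multiplying by $1/(2\mu_n+\lambda_n)$ turns \eqref{2.eq.thmEFV2} into \eqref{2.eq.thmEFV}.

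\emph{Splitting off the effective viscous pressure.} By \eqref{1.eq.EVF} I would write $P(\rho_n)-(2\mu_n+\lambda_n)\Div\uu_n=q_n-2g_n$, where $q_n:=P(\rho_n)-\Div\Delta^{-1}\Div\mathbb{S}_n$ is the effective viscous pressure and $g_n:=\sum_{i,j}[\mu_n;R_iR_j]\mathbb{D}_{ij}(\uu_n)$, and similarly $\overline P-(2\mu_\infty+\lambda_\infty)\Div\uu=\bar q-2g_\infty$ with $\bar q:=\overline P-\Div\Delta^{-1}\Div\mathbb{S}_\infty$ and $g_\infty:=\sum_{i,j}[\mu_\infty;R_iR_j]\mathbb{D}_{ij}(\uu)$. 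For the $q_n$ term, applying $\Div\Delta^{-1}$ to \eqref{1.eq.momentum1} gives $q_n=-\Div\Delta^{-1}\partial_t(\rho_n\uu_n)-\Div\Delta^{-1}\Div(\rho_n\uu_n\otimes\uu_n)$, an expression \emph{free of the viscosity}; hence the convergence
\[
\int_0^T\!\!\int_\Omega\varphi\,B(\rho_n)\,q_n\,d\xx\,dt\ \longrightarrow\ \int_0^T\!\!\int_\Omega\varphi\,\overline{B(\rho)}\,\bar q\,d\xx\,dt
\]
is established exactly as in the constant-coefficient case treated by Feireisl \cite{F}: multiply by $\varphi B(\rho_n)$, integrate by parts in time replacing $\partial_t B(\rho_n)$ via the renormalized continuity equation, integrate by parts in space, and dispose of the only non-obvious remainders --- which are bilinear in the weakly convergent quantities $\rho_n\uu_n$ and $B(\rho_n)\uu_n$ composed with double Riesz transforms --- by the commutator lemma of Feireisl--Novotn\'y--Petzeltov\'a \cite{FNP}, whose Lebesgue-exponent hypothesis is met precisely because $\gamma>N/2$; commutators of the fixed cut-off $\varphi$ with Calder\'on--Zygmund operators are compact and harmless. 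The limit is identified as $\int\varphi\overline{B(\rho)}\bar q$ because $(\rho,\uu)$ solves the limit momentum equation, so $\bar q=-\Div\Delta^{-1}\partial_t(\rho\uu)-\Div\Delta^{-1}\Div(\rho\uu\otimes\uu)$.

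\emph{The viscosity commutator term.} The new point is the $g_n$ term: to show $\int\varphi B(\rho_n)g_n\to\int\varphi\overline{B(\rho)}g_\infty$, which is not immediate since a priori $g_n$ only converges weakly in $L^2$ and $B(\rho_n)$ only weakly-$\star$ in $L^\infty$. Here the smoothness of the mollified density is decisive. Working on $\mathbb{R}^N$ --- so that the Fourier-multiplier identities are available, using that the zero-extension of $\uu_n\in H^1_0(\Omega)$ lies in $H^1(\mathbb{R}^N)$ --- the identity $\sum_{i,j}R_iR_j\mathbb{D}_{ij}(\vv)=\Div\vv$ and the product rule $\mathbb{D}_{ij}(\mu_n\uu_n)=\mu_n\mathbb{D}_{ij}(\uu_n)+\tfrac12(u_n^i\partial_j\mu_n+u_n^j\partial_i\mu_n)$ rewrite $g_n$ with \emph{no derivatives of $\uu_n$}:
\[
g_n=-\uu_n\cdot\nabla\mu_n+\sum_{i,j}R_iR_j\bigl(u_n^i\,\partial_j\mu_n\bigr).
\]
Since $\nabla\mu_n$ and all its derivatives are bounded in $L^\infty(0,T;L^\infty(\mathbb{R}^N))$ and $R_iR_j$ preserves $H^1(\mathbb{R}^N)$, the sequence $g_n$ is bounded in $L^2(0,T;H^1(\mathbb{R}^N))$; as $\nabla\mu_n\to\nabla\mu_\infty$ locally uniformly, the same identity (applied to $(\uu,\mu_\infty)$) shows $g_n\rightharpoonup g_\infty$ weakly in $L^2(0,T;H^1)$. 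Now $\varphi g_n\rightharpoonup\varphi g_\infty$ weakly in $L^2(0,T;H^1_0(\Omega))$ while $B(\rho_n)\to\overline{B(\rho)}$ strongly in $L^2(0,T;H^{-1}(\Omega))$, so the duality pairing converges and $\int\varphi B(\rho_n)g_n\to\int\varphi\overline{B(\rho)}g_\infty$. (Equivalently, one may keep $g_n$ inside the commutator and invoke the compactness of commutators of smooth functions with Riesz transforms --- Coifman--Rochberg--Weiss and Coifman--Meyer \cite{CRW,CM} --- together with the operator-norm convergence $[\mu_n;R_iR_j]\to[\mu_\infty;R_iR_j]$; either way the mechanism is that the smooth multiplier $\mu_n=\mu([\rho_n]^\eta)$ trades the weak convergence of $\mathbb{D}(\uu_n)$ for a compactness gain.) Adding the two pieces gives \eqref{2.eq.thmEFV2}, hence \eqref{2.eq.thmEFV}. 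Passing from $B\in C^1$ with bounded renormalizing function $b$ (for which the renormalized continuity equation is available) to a general bounded continuous $B$ is routine, since every term in \eqref{2.eq.thmEFV}--\eqref{2.eq.thmEFV2} depends on $B$ continuously for local uniform convergence with a uniform sup-bound.

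\emph{Main obstacle.} I expect the crux to be the $g_n$ term --- specifically, proving that the commutator structure with the mollified viscosity upgrades the weak $L^2$ convergence to convergence strong enough to survive multiplication by the merely weakly-$\star$ convergent $B(\rho_n)$; this is exactly the reason the mollification $[\rho_n]^\eta$ is built into the viscosity law. The remaining delicate points --- the zero-extensions, the fact that the Riesz-transform identities are clean only on $\mathbb{R}^N$, and the commutators with the spatial cut-off --- are routine but require care.
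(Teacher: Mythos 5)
Your proposal is correct, and the architecture is recognizably the same as the paper's: test the momentum equations with $\varphi\,\Delta^{-1}\nabla(\text{something involving }B(\rho_n))$, use the renormalized continuity equation to replace time derivatives, control the lower-order terms by the compactness of the convolution $\rho\mapsto[\rho]^\eta$ and the div--curl machinery, and isolate the new viscosity commutator. That said, you organize it in two ways that genuinely differ from the paper's proof and are worth recording. First, you prove \eqref{2.eq.thmEFV2} with the simpler test functions $\varphi\,\Delta^{-1}\nabla(B(\rho_n))$ and then deduce \eqref{2.eq.thmEFV} by noting that $(2\mu_n+\lambda_n)^{-1}\to(2\mu_\infty+\lambda_\infty)^{-1}$ uniformly, extending the convergence from $C_c^\infty$ to $C_c$ test functions via the uniform $L^p$ bound on $B(\rho_n)[P(\rho_n)-(2\mu_n+\lambda_n)\Div\uu_n]$ (with $p>1$). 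The paper instead proves \eqref{2.eq.thmEFV} directly with the $F$-twisted test functions $\varphi\,\Delta^{-1}\nabla(B(\rho_n)F([\rho_n]^\eta))$, which costs an extra lemma (Lemma~\ref{2.l.eqBF}) deriving the transport-type equation for $B(\rho_n)F([\rho_n]^\eta)$; your route avoids that lemma but requires the (easy, though nontrivial) density argument for the test function class. Second, for the viscosity commutator $g_n=\sum_{i,j}[\mu_n;R_iR_j]\mathbb{D}_{ij}(\uu_n)$, you exploit the identity $g_n=-\uu_n\cdot\nabla\mu_n+\sum_{i,j}R_iR_j(u_n^i\partial_j\mu_n)$, valid on $\mathbb{R}^N$ after zero-extension, to read off an $L^2(0,T;H^1(\mathbb{R}^N))$ bound directly and then pair against $B(\rho_n)\to\overline{B(\rho)}$ strong in $L^2(0,T;H^{-1}(\Omega))$; the paper instead invokes the Coifman--Meyer estimate (Theorem~\ref{A.C.2}, part (2)) to put $g_n$ in $L^2(0,T;W^{1,q})$ for $q<2$ and pairs with $B(\rho_n)F([\rho_n]^\eta)\to\overline{B(\rho)}F([\rho]^\eta)$ strong in $C([0,T];W^{-1,p})$. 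Your explicit identity is more elementary and in fact gives the slightly sharper Hilbertian bound; the paper's route via Coifman--Meyer is more uniform since it also handles the cut-off commutator $[\nabla\Delta^{-1}\nabla,\varphi]:\mathbb{S}_n$ ($I_6^n$ in the paper) by the same theorem, a term you do still need to dispatch and which you (correctly) acknowledge as a routine compactness step. The only point I'd press you to make rigorous is the sentence "applying $\Div\Delta^{-1}$ to \eqref{1.eq.momentum1} gives $q_n=-\Div\Delta^{-1}\partial_t(\rho_n\uu_n)-\Div\Delta^{-1}\Div(\rho_n\uu_n\otimes\uu_n)$": this formula is only formal because the momentum equation holds on $\Omega$, not $\mathbb{R}^N$, and the actual proof has to proceed through the localized test functions you describe afterward; as written, the display could mislead a reader into thinking $q_n$ has a clean global representation, which is exactly the trap the compactly supported multiplier $\varphi$ is there to avoid.
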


The proof of  \eqref{2.eq.thmEFV} is similar to that of \eqref{2.eq.thmEFV2}, therefore we only prove in details  the former and comment briefly the modifications that have to be made for the proof of the latter.  Before proceeding to the proof properly, we give a brief account of the strategy of the proof as follows. First, we rewrite the momentum equation satisfied by the sequence $(\rho_n,\uu_n)$ as
\begin{equation}\label{2.eq.momentumn}
 \nabla P(\rho_n) - \Div \Sbb_n = -\partial_t(\rho_n \uu_n) - \Div (\rho_n \uu_n \otimes \uu_n) ,
\end{equation}
and show that we may take the limit as $n\to\infty$ to conclude that the following equation holds in the sense of distributions
\begin{equation}\label{2.eq.momentumlimit}
 \nabla \overline{P} - \Div \Sbb = -\partial_t(\rho \uu) - \Div (\rho \uu \otimes \uu),
\end{equation}
where, $\mathbb{S}=\lambda(\rhoeta)(\Div\uu)\mathbb{I}+2\mu(\rhoeta)\mathbb{D}(\uu)$.

Then, we take the test functions
  \begin{equation} \label{2.eqphi}
   \begin{cases}
    \phi_n(t,\xx) = \varphi(t,\xx) \Delta^{-1} \nabla \Big( B(\rho_n) F([\rho_n]^\eta) \Big)(t,\xx) \text{ and} \\  
    \phi(t,\xx) = \varphi(t,\xx) \Delta^{-1} \nabla \Big( \overline{B(\rho)} F([\rho]^\eta) \Big)(t,\xx)
   \end{cases}
  \end{equation}
  in equations \eqref{2.eq.momentumn} and \eqref{2.eq.momentumlimit}, respectively, where
 \begin{equation}
    F(\xi) = \frac{1}{\lambda(\xi) + 2\mu(\xi)}. \label{2.defF}
  \end{equation}
After some manipulation, upon integrating by parts, the terms involved in the conclusion of Theorem~\ref{2.thm.EVF} will appear in each one of the two resulting equations. Then, to conclude it suffices to show that each of the remaining terms converges to its counterpart as $n \to \infty$. 

Now, in order to deduce equation \eqref{2.eq.momentumlimit} we first observe that 
\begin{equation}\label{2.eq.rhontorho}
\rho_n\to \rho\text{ in }C([0,T];L_{weak}^\gamma(\Omega)),
\end{equation} 
where the space $C([0,T];L_{\text{weak}}^p(\Omega))$ is defined as in Appendix~\ref{A.ApdxA}. Indeed, \eqref{2.eq.rhontorho} follows by applying Proposition~\ref{A.B.prop2} in view of the equations verified by each $\rho_n$. Since the convolution operator $f\in L^\gamma(\Omega)\mapsto [f]^\eta \in C^k(\overline{\Omega})$ is compact for any integer $k\ge 0$, we see that 
\[
\lambda([\rho_n]^\eta)\to \lambda(\rhoeta) \text{ and } \mu([\rho_n]\eta)\to \mu(\rhoeta)\quad \text{strongly in }C(0,T;C^\infty(\overline{\Omega})).
\]
Thus, if $\mathbb{S}_n$ denotes the viscous stress tensor corresponding to $(\rho_n,\uu_n)$, we have that $\mathbb{S}_n\to \mathbb{S}$ in the sense of distributions. Moreover, by \eqref{2.convergen} and \eqref{2.rhoun2} we have that the sequence $\rho_n\uu_n$ is bounded in $L^\infty(0,T;L^{\frac{2\gamma}{\gamma+1}}(\Omega))$, so we can apply Proposition~\ref{A.B.prop2} once again to deduce that (up to a subsequence) $\rho_n\uu_n$ converges in $C([0,T];L_{weak}^{\frac{2\gamma}{\gamma+1}}(\Omega))$. To show that the limit equals $\rho\uu$ it suffices to note that $L^\gamma(\Omega)$ is compactly imbedded in $H^{-1}(\Omega)$, so that
\[
\rho_n\to \rho \text{ in }C([0,T];H^{-1}(\Omega)),
\]
which, together with \eqref{2.convergen}, yields the convergence 
\begin{equation}\label{2.rhonuntorhouweak}
\rho_n\uu_n\to \rho\uu\quad\text{in }C([0,T];L_{weak}^{\frac{2\gamma}{\gamma+1}}(\Omega)).
\end{equation}
Now, note that $\frac{2\gamma}{\gamma+1}$ is also compactly imbedded in $H^{-1}(\Omega)$ due to our hypothesis that $\gamma > N/2$ (see e.g. theorem~2.8 in \cite{F}). Thus, we conclude that 
\begin{equation}\label{2.eq.rhonunstrong}
\rho_n\uu_n\to\rho\uu\quad\text{in }C([0,T];H^{-1}(\Omega)),
\end{equation}
and,  consequently $\rho_n\uu_n\otimes\uu_n\to \rho\uu\otimes\uu$ in the sense of distributions. Furthermore, since $H_0^1(\Omega)$ is imbedded continuously in $L^{c}(\Omega)$, where $c=\frac{2N}{N-2}$ if $N\ge 3$ and $c>1$ arbitrary if $N=2$, we have that, in fact, 
\[
\rho_n\uu_n\otimes\uu_n \to \rho\uu\otimes\uu\quad\text{weakly in }L^2(0,T;L^{s}(\Omega)),
\] 
where $\frac{1}{s}=\frac{\gamma+1}{2\gamma}+\frac{N-2}{2N}$ if $N\ge 3$ and $\frac{1}{s}>\frac{\gamma+1}{2\gamma}$ if $N=2$. In this way, equation \eqref{2.eq.momentumlimit} results by taking the limit as $n\to\infty$ in equation \eqref{2.eq.momentumn}.

In order to proceed with the remaining details of the proof of Theorem~\ref{2.thm.EVF}, we first need a couple of preliminary observations. The first one concerns the resulting terms in the left-hand side of equations \eqref{2.eq.momentumn} and \eqref{2.eq.momentumlimit} after taking the test functions $\phi_n$ and $\phi$.

\begin{lemma}\label{2.l.eval}
Let $F$ be given by \eqref{2.defF}. Then,
\begin{multline*}
\int_0^T\int_\Omega \overline{P} \Div\phi\, d\x dt - \int_0^T\int_\Omega \mathbb{S}\cdot \nabla\phi\, d\x dt \\
= \int_0^T\int_\Omega \varphi q\overline{B(\rho)} F([\rho]^\eta)d\x dt - \int_0^T\int_\Omega [\nabla\Delta^{-1}\nabla,\varphi]:(\mathbb{S})\overline{B(\rho)} F([\rho]^\eta)d\x dt\\
   + \int_0^T\int_\Omega \Big((\overline{P}\ \mathbb{I} -\mathbb{S})\cdot\nabla\varphi\Big)\cdot \Delta^{-1}\nabla \Big(\overline{B(\rho)} F([\rho]^\eta)\Big)d\x dt,
\end{multline*}
where, $q$ is the effective viscous pressure, corresponding to $\overline{P}$ minus the gradient part of the Hodge decomposition of $\Div\mathbb{S}$, that is
\begin{equation}\label{2.eq.EVP}
q:=\overline{P}-\Big(\lambda([\rho]^\eta)+2\mu([\rho]^\eta)\Big)\Div\uu + 2[\mu(\rhoeta),\nabla\Delta^{-1}\nabla]:\mathbb{D}(\uu).
\end{equation}

Similarly,
\begin{multline*}
\int_0^T\int_\Omega P(\rho_n) \Div\phi\, d\x dt -\int_0^T\int_\Omega \mathbb{S}_n\cdot \nabla\phi_n \, d\x dt \\
= \int_0^T\int_\Omega \varphi q_n B(\rho_n) F([\rho_n]^\eta)d\x dt - \int_0^T\int_\Omega [\nabla\Delta^{-1}\nabla,\varphi]:(\mathbb{S}_n)B(\rho_n) F([\rho_n]^\eta)d\x dt\\
   + \int_0^T\int_\Omega \Big((P(\rho_n)\ \mathbb{I}-\mathbb{S}_n)\cdot\nabla\varphi\Big)\cdot \Delta^{-1}\nabla \Big(B(\rho_n) F([\rho_n]^\eta)\Big)d\x dt,
\end{multline*}
where, 
\begin{equation}\label{2.eq.EVPn}
q_n:=P(\rho_n)-\Big(\lambda([\rho_n]^\eta)+2\mu([\rho_n]^\eta)\Big)\Div\uu_n + 2[\mu(\rhoneta),\nabla\Delta^{-1}\nabla]:\mathbb{D}(\uu_n).
\end{equation}
\end{lemma}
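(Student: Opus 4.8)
The plan is to take the test functions $\phi_n$ and $\phi$ defined in \eqref{2.eqphi} and substitute them, respectively, into the momentum equations \eqref{2.eq.momentumn} and \eqref{2.eq.momentumlimit}, then identify the left-hand side contributions. Since $\phi = \varphi\, \Delta^{-1}\nabla\bigl(\overline{B(\rho)}F([\rho]^\eta)\bigr)$, we compute $\nabla\phi$ by the product rule: $\nabla\phi = \nabla\varphi \otimes \Delta^{-1}\nabla(\overline{B(\rho)}F([\rho]^\eta)) + \varphi\, \nabla\Delta^{-1}\nabla(\overline{B(\rho)}F([\rho]^\eta))$, and similarly $\Div\phi = \nabla\varphi\cdot\Delta^{-1}\nabla(\overline{B(\rho)}F([\rho]^\eta)) + \varphi\,(\overline{B(\rho)}F([\rho]^\eta))$, using $\Div\Delta^{-1}\nabla = \mathrm{Id}$. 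Plugging these into $\int_0^T\int_\Omega(\overline P\,\Div\phi - \mathbb{S}\cdot\nabla\phi)$, the terms containing $\nabla\varphi$ collect into the last integral of the asserted identity, while the terms with the factor $\varphi$ produce $\int\varphi\,\overline{B(\rho)}F([\rho]^\eta)\bigl(\overline P - \mathbb{S}:\nabla\Delta^{-1}\nabla(\cdot)/(\cdot)\bigr)$ — more precisely, $\int \varphi\bigl(\overline P\,\overline{B(\rho)}F([\rho]^\eta) - \mathbb{S}:\nabla\Delta^{-1}\nabla(\overline{B(\rho)}F([\rho]^\eta))\bigr)$.

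The crux is then to rewrite the term $\int \varphi\, \mathbb{S}:\nabla\Delta^{-1}\nabla(\overline{B(\rho)}F([\rho]^\eta))$. Here I would move the operator $\nabla\Delta^{-1}\nabla$ off of $\overline{B(\rho)}F([\rho]^\eta)$ and onto $\varphi\mathbb{S}$, at the cost of a commutator: $\int \varphi\,\mathbb{S}:\nabla\Delta^{-1}\nabla(g) = \int g\,\nabla\Delta^{-1}\nabla:(\varphi\mathbb{S}) = \int g\bigl(\varphi\,\nabla\Delta^{-1}\nabla:\mathbb{S} + [\nabla\Delta^{-1}\nabla,\varphi]:\mathbb{S}\bigr)$, where $g = \overline{B(\rho)}F([\rho]^\eta)$, using that $\nabla\Delta^{-1}\nabla$ is self-adjoint (in the sense $R_iR_j$ is self-adjoint on $L^2(\R^N)$, extending $\mathbb{S}$ and the test objects by zero outside $\Omega$). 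Recalling the definition \eqref{1.eq.EVF0}--\eqref{1.eq.EVF} of the effective viscous pressure, $\overline P - \nabla\Delta^{-1}\nabla:\mathbb{S} = \overline P - (\lambda+2\mu)\Div\uu + 2[\mu(\rhoeta),\nabla\Delta^{-1}\nabla]:\mathbb{D}(\uu) = q$, which is exactly \eqref{2.eq.EVP}. This converts the $\varphi$-terms into $\int\varphi\, q\,\overline{B(\rho)}F([\rho]^\eta) - \int g\,[\nabla\Delta^{-1}\nabla,\varphi]:\mathbb{S}$, yielding the first identity. The second identity is obtained verbatim with $\rho\rightsquigarrow\rho_n$, $\overline P\rightsquigarrow P(\rho_n)$, $\overline{B(\rho)}\rightsquigarrow B(\rho_n)$, $\mathbb{S}\rightsquigarrow\mathbb{S}_n$, $q\rightsquigarrow q_n$.

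The main technical points to be careful about are: (i) justifying the integration by parts / self-adjointness move for $\nabla\Delta^{-1}\nabla$ on the bounded domain — one works on $\R^N$ with zero extensions and uses the $L^p$-boundedness of Riesz transforms, noting $\mathbb{S}\in L^2((0,T)\times\Omega)$ and $g\in L^\infty$ so all pairings converge; (ii) checking that $\phi_n,\phi$ are admissible test functions for the respective (distributional) momentum equations — this needs $\Delta^{-1}\nabla(B(\rho_n)F([\rho_n]^\eta))$ to have enough spatial regularity (it lies in $W^{1,p}$ for the relevant $p$ since $B(\rho_n)F([\rho_n]^\eta)\in L^\infty$) and $\varphi\in C_c^\infty$ localizes it; one may need to regularize in time or appeal to the standard density argument as in \cite{F}; (iii) the commutator term $[\nabla\Delta^{-1}\nabla,\varphi]:\mathbb{S}$ must be recognized as a genuine object — this is where the Coifman–Rochberg–Weiss type estimates enter later, but for the \emph{statement} of Lemma~\ref{2.l.eval} it is only a bookkeeping identity. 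I expect step (ii), the admissibility of the test function and the precise integration-by-parts bookkeeping keeping track of which terms carry $\nabla\varphi$ versus $\varphi$, to be the most delicate part; the algebra leading to the appearance of $q$ and $q_n$ is then routine given the definition \eqref{1.eq.EVF}.
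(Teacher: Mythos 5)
Your proposal is correct and matches the paper's argument essentially step for step: expand $\nabla\phi$ and $\Div\phi$ by the product rule, separate the $\nabla\varphi$-terms from the $\varphi$-terms, use self-adjointness of $\nabla\Delta^{-1}\nabla$ (extending by zero to $\R^N$) to transfer the operator onto $\varphi\Sbb$ at the cost of the commutator $[\nabla\Delta^{-1}\nabla,\varphi]$, and then identify $\overline P - \nabla\Delta^{-1}\nabla:\Sbb$ with the effective viscous pressure $q$ via $\nabla\Delta^{-1}\nabla:\mathbb{D}(\uu)=\Div\uu$. Your technical caveats (ii)--(iii) go a bit beyond the paper, which treats the lemma as a pure bookkeeping identity and does not comment on test-function admissibility, but they do not alter the approach.
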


\begin{proof}
The proof of both identities is similar and therefore we only prove the first one. To that end, note that on the one hand we have that
\begin{multline}\label{3.eq.l1}
\int_0^T\int_\Omega \mathbb{S}\cdot \nabla\phi d\x dt = \int_0^T\int_\Omega \varphi\mathbb{S}: \nabla\Delta^{-1}\nabla \Big(\overline{B(\rho)} F([\rho])\Big)d\x dt\\
  \qquad\qquad\qquad + \int_0^T\int_\Omega \Big(\mathbb{S}\cdot\nabla\varphi\Big)\cdot \Delta^{-1}\nabla \Big(\overline{B(\rho)} F([\rho])\Big)d\x dt
\end{multline}
On the other hand, using the selfadjointness of the operator $\nabla\Delta^{-1}\nabla$ and by the definition of the commutator $[\cdot,\cdot]$ we see that
\begin{multline*}%\label{3.eq.l2}
\int_0^T\int_\Omega \varphi\mathbb{S}: \nabla\Delta^{-1}\nabla \Big(\overline{B(\rho)} F([\rho])\Big)d\x dt = \int_0^T\int_\Omega [\nabla\Delta^{-1}\nabla,\varphi](\mathbb{S})\overline{B(\rho)} F([\rho])d\x dt \\
+ \int_0^T\int_\Omega \varphi \tilde{q}\rho F([\rho]^\eta)d\x dt
\end{multline*}
where, $\tilde{q}:=\nabla\Delta^{-1}\nabla:\Sbb$. Then, noting that $\nabla\Delta^{-1}\nabla:\mathbb{D}(\uu)= \Div\uu$ we see that $\tilde{q}=\Big(\lambda([\rho]^\eta)+2\mu([\rho]^\eta)\Big)\Div\uu + 2[\mu(\rhoeta),\nabla\Delta^{-1}\nabla]:\mathbb{D}(\uu)$.

Finally, we see that
\begin{multline}\label{3.eq.l3}
\int_0^T\int_\Omega \overline{P}\Div\phi d\x dt = \int_0^T\int_\Omega \overline{P}\, \nabla\varphi\cdot \Delta^{-1}\nabla\Big( \overline{B(\rho)} F(\rhoeta) \Big) d\x dt\\
 + \int_0^T\int_\Omega \overline{P}\varphi \overline{B(\rho)} F(\rhoeta) d\x dt,
\end{multline}
and the result follows by gathering \eqref{3.eq.l1}-\eqref{3.eq.l3}.
\end{proof}

The second preliminary observation will allow us to evaluate the first term on the right-hand side of equations \eqref{2.eq.momentumn} and \eqref{2.eq.momentumlimit} after taking the test functions $\phi_n$ and $\phi$. Indeed, In order to evaluate the partial derivative of $\phi_n$ and $\phi$ with respect to $t$ we need to deduce the equations that $B(\rho_n) F(\rhoneta)$ and $\overline{B(\rho)} F(\rhoeta)$ satisfy. 

Let us point out that throughout this section the over line stands for a weak limit of the sequence indexed in $n$.

\begin{lemma}\label{2.l.eqBF}
Let $F$ be given by \eqref{2.defF} and let $b$ be given by \eqref{1.eq.renorm2}.Then, the following equation holds in the sense of distributions in $\mathbb{R}^N$
\begin{equation}\label{2.eq.BnFn}
 \Big(B(\rho_n) F(\rhoneta)\Big)_t + \Div \Big(B(\rho_n) F(\rhoneta )  \uu_n  \Big)  = h_n ,
\end{equation}
where
\begin{multline*}
h_n \stackrel{\text{def}}{=} -F(\rhoneta)b(\rho_n)\Div\uu_n +  B(\rho_n)  F'(\rhoneta)\Big[\Div \big(\rhoneta \uu_n  \big) - \Div \big(\eta \star (\rho_n  \uu_n )\big) \Big]\\
  - B(\rho_n)\rhoneta F'(\rhoneta) \Div \uu_n
\end{multline*}
  
Moreover, $h_n\to h$ weakly in $ L^2((0,T) \times \R^N)$, as $n\to\infty$, where
\begin{multline*}
    h \stackrel{\text{def}}{=}-F(\rhoeta)\overline{b(\rho)\Div\uu} +  F'(\rhoeta)\Big[ \overline{B(\rho)} \nabla\rhoeta  \cdot \uu + \rhoeta\, \overline{B(\rho) \Div \uu}   \\
  - \overline{B(\rho)}\Div \big(\eta \star (\rho \uu )\big) \Big]     - \rhoeta\, F'(\rhoeta)\overline{B(\rho)\Div\uu},
  \end{multline*}
and $(\rho, \uu)$ satisfy the following equation in the sense of distributions
\begin{equation}\label{2.eq.BF}
    \big(\overline{B(\rho)} F(\rhoeta)\big)_t + \Div \big(\overline{B(\rho)} F(\rhoeta )  \uu \big)  =\, h.
 \end{equation}
\end{lemma}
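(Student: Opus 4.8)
The strategy is to derive equation \eqref{2.eq.BnFn} first, at the level of the sequence, and then pass to the limit to obtain \eqref{2.eq.BF}. For the first step, the natural starting point is the renormalized continuity equation satisfied by each $(\rho_n,\uu_n)$: with $B$ as in \eqref{1.eq.renorm1} (truncating if needed so $B$ is, say, $C^1$ up to the boundary of its argument range), one has
\[
B(\rho_n)_t + \Div\big(B(\rho_n)\uu_n\big) + b(\rho_n)\Div\uu_n = 0
\]
in $\DD'((0,T)\times\R^N)$, where $b$ is given by \eqref{1.eq.renorm2}. Separately, one needs a transport-type equation for $F(\rhoneta)$. Since $\rhoneta = \eta\star\tilde\rho_n$ is smooth in $x$, convolving the continuity equation \eqref{1.eq.continuity} (in the form $\po_t\tilde\rho_n + \Div(\widetilde{\rho_n\uu_n}) = 0$) with $\eta$ gives $\po_t\rhoneta = -\Div(\eta\star(\rho_n\uu_n))$, hence by the chain rule
\[
\po_t F(\rhoneta) = -F'(\rhoneta)\,\Div\big(\eta\star(\rho_n\uu_n)\big).
\]
The product rule $\po_t(B(\rho_n)F(\rhoneta)) = F(\rhoneta)\po_t B(\rho_n) + B(\rho_n)\po_t F(\rhoneta)$, combined with the identity $\Div(B(\rho_n)F(\rhoneta)\uu_n) = F(\rhoneta)\Div(B(\rho_n)\uu_n) + B(\rho_n)\nabla F(\rhoneta)\cdot\uu_n$ and $\nabla F(\rhoneta)\cdot\uu_n = F'(\rhoneta)\nabla\rhoneta\cdot\uu_n = F'(\rhoneta)\big[\Div(\rhoneta\uu_n) - \rhoneta\Div\uu_n\big]$, then yields \eqref{2.eq.BnFn} after collecting terms; the three pieces of $h_n$ are exactly the leftovers: $-F(\rhoneta)b(\rho_n)\Div\uu_n$ from the renormalized equation, $B(\rho_n)F'(\rhoneta)[\Div(\rhoneta\uu_n) - \Div(\eta\star(\rho_n\uu_n))]$ from combining the transport terms, and $-B(\rho_n)\rhoneta F'(\rhoneta)\Div\uu_n$ from the splitting of $\nabla F(\rhoneta)\cdot\uu_n$. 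One must be careful that all products are well defined as distributions: $\rhoneta$, $F(\rhoneta)$, $F'(\rhoneta)$ and $\eta\star(\rho_n\uu_n)$ and its divergence are smooth in $x$ (with bounds uniform in $n$ on compact sets, since $F$ is smooth and bounded away from $0$ by \eqref{1.eq.viscosities2}), $B(\rho_n),b(\rho_n)$ are bounded, and $\Div\uu_n\in L^2$, so $h_n\in L^2((0,T)\times\R^N)$ with a uniform bound.

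**Passing to the limit.**
For \eqref{2.eq.BF} and the weak $L^2$ convergence $h_n\rightharpoonup h$, the main point is to identify the weak limit of each product in $h_n$. Here the key structural fact is the one already used in the excerpt: the convolution map $f\in L^\gamma(\Omega)\mapsto [f]^\eta\in C^k(\overline\Omega)$ is compact, and combined with $\rho_n\to\rho$ in $C([0,T];L^\gamma_{\mathrm{weak}}(\Omega))$ (from \eqref{2.eq.rhontorho}) this gives $\rhoneta\to\rhoeta$ and hence $F(\rhoneta)\to F(\rhoeta)$, $F'(\rhoneta)\to F'(\rhoeta)$, $\nabla\rhoneta\to\nabla\rhoeta$ \emph{strongly} in $C([0,T];C^k(\overline\Omega))$ for every $k$. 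Similarly, since $\rho_n\uu_n\to\rho\uu$ in $C([0,T];H^{-1}(\Omega))$ by \eqref{2.eq.rhonunstrong}, and convolution with $\eta$ maps $H^{-1}$ to $C^\infty$ compactly, $\eta\star(\rho_n\uu_n)\to\eta\star(\rho\uu)$ and $\Div(\eta\star(\rho_n\uu_n))\to\Div(\eta\star(\rho\uu))$ strongly in $C([0,T];C^k(\overline\Omega))$. Therefore in each product appearing in $h_n$, one factor converges strongly in a smooth topology and the other converges weakly in $L^2$ (or weakly-$\star$ in $L^\infty$): $b(\rho_n)\Div\uu_n\rightharpoonup\overline{b(\rho)\Div\uu}$ (defining the overlined quantity as this weak limit), $B(\rho_n)\rightharpoonup\overline{B(\rho)}$ weakly-$\star$ in $L^\infty$, $B(\rho_n)\Div\uu_n\rightharpoonup\overline{B(\rho)\Div\uu}$ weakly in $L^2$, and $\Div\uu_n\rightharpoonup\Div\uu$ weakly in $L^2$. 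Multiplying a weakly convergent sequence by a strongly (uniformly) convergent one passes to the limit in the product, so term by term: $F(\rhoneta)b(\rho_n)\Div\uu_n\rightharpoonup F(\rhoeta)\overline{b(\rho)\Div\uu}$; $B(\rho_n)F'(\rhoneta)\Div(\rhoneta\uu_n) = F'(\rhoneta)\rhoneta\,B(\rho_n)\Div\uu_n + F'(\rhoneta)\big(\nabla\rhoneta\big)\cdot\big(B(\rho_n)\uu_n\big)$ — here $B(\rho_n)\uu_n\rightharpoonup\overline{B(\rho)}\,\uu$ weakly in $L^2$ because $B(\rho_n)\rightharpoonup\overline{B(\rho)}$ weakly-$\star$ in $L^\infty$ and $\uu_n\to\uu$ strongly in $L^2((0,T)\times\Omega)$ (Rellich, since $\uu_n$ is bounded in $L^2(0,T;H^1_0)$ and the equation controls a time derivative — or one invokes the Aubin–Lions/compactness already implicit in the setup) — giving $F'(\rhoeta)\rhoeta\,\overline{B(\rho)\Div\uu} + F'(\rhoeta)\nabla\rhoeta\cdot(\overline{B(\rho)}\,\uu)$; and $-B(\rho_n)F'(\rhoneta)\Div(\eta\star(\rho_n\uu_n))\rightharpoonup -\overline{B(\rho)}\,F'(\rhoeta)\Div(\eta\star(\rho\uu))$; and $-B(\rho_n)\rhoneta F'(\rhoneta)\Div\uu_n\rightharpoonup -\rhoeta F'(\rhoeta)\overline{B(\rho)\Div\uu}$. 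Summing these gives precisely the stated $h$. The convergence is in the weak $L^2$ sense because each limiting product lies in $L^2$ and the strong-times-weak argument gives convergence tested against $L^2$ functions.

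**Obtaining \eqref{2.eq.BF} and the main obstacle.**
Finally, equation \eqref{2.eq.BF} follows by passing to the limit in \eqref{2.eq.BnFn} tested against $\psi\in C_c^\infty((0,T)\times\R^N)$: the term $\int B(\rho_n)F(\rhoneta)\po_t\psi$ converges to $\int\overline{B(\rho)}F(\rhoeta)\po_t\psi$ since $B(\rho_n)\rightharpoonup\overline{B(\rho)}$ weakly-$\star$ in $L^\infty$ and $F(\rhoneta)\to F(\rhoeta)$ strongly; the term $\int B(\rho_n)F(\rhoneta)\uu_n\cdot\nabla\psi$ converges to $\int\overline{B(\rho)}F(\rhoeta)\uu\cdot\nabla\psi$ using $B(\rho_n)\uu_n\rightharpoonup\overline{B(\rho)}\,\uu$ weakly in $L^2$ against the strongly convergent $F(\rhoneta)\nabla\psi$; and the right-hand side converges to $\int h\psi$ by the just-established $h_n\rightharpoonup h$. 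I expect the main technical obstacle to be a bookkeeping one: correctly identifying which products are genuine nonlinearities (the ones carrying an overline, namely $\overline{b(\rho)\Div\uu}$, $\overline{B(\rho)\Div\uu}$, and $\overline{B(\rho)}$) versus which simplify because one factor is the \emph{mollified} density — the point being that the mollification kills the oscillations, so $F(\rhoneta),\nabla\rhoneta,\Div(\eta\star(\rho_n\uu_n))$ all converge strongly and therefore commute with the weak limits, whereas un-mollified quantities like $\Div\uu_n$ and $b(\rho_n)$ do not. Keeping the strong/weak dichotomy straight in each of the roughly six product terms is where care is needed; the underlying functional-analytic inputs (compactness of convolution, $C([0,T];L^\gamma_{\mathrm{weak}})$ convergence, Rellich for $\uu_n$) are all either already established in the excerpt or standard.
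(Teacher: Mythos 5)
Your overall strategy coincides with the paper's: derive \eqref{2.eq.BnFn} by combining the renormalized continuity equation for $B(\rho_n)$ with a transport equation for $F(\rhoneta)$ (obtained by convolving the extended continuity equation with $\eta$ and applying the chain rule), then identify the weak limit of each term using the key strong/weak dichotomy between mollified quantities and the raw sequences. The derivation of $h_n$ by collecting leftover terms, and the observation that $\rhoneta$, $F(\rhoneta)$, $F'(\rhoneta)$, $\nabla\rhoneta$, and $\Div(\eta\star(\rho_n\uu_n))$ all converge strongly in $C([0,T];C^k(\overline\Omega))$ while $\Div\uu_n$, $b(\rho_n)\Div\uu_n$, $B(\rho_n)\Div\uu_n$, $B(\rho_n)$ only converge weakly, is exactly the right bookkeeping.

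There is, however, a genuine error in your justification of the single convergence $B(\rho_n)\uu_n\rightharpoonup\overline{B(\rho)}\,\uu$. You invoke ``$\uu_n\to\uu$ strongly in $L^2((0,T)\times\Omega)$ (Rellich, \ldots Aubin--Lions).'' This is not available here. The a priori estimates control $\uu_n$ in $L^2(0,T;H^1_0(\Omega))$ but there is no bound on $\partial_t\uu_n$ in any negative-order space; what the momentum equation controls is $\partial_t(\rho_n\uu_n)$, not $\partial_t\uu_n$, so Aubin--Lions cannot be applied to $\uu_n$ directly. Indeed, if the velocities converged strongly in $L^2$, essentially all the nonlinearities of the problem could be passed to the limit directly and the entire effective-viscous-flux machinery (the subject of the paper) would be superfluous; the lack of strong $L^2$ compactness of the velocity is precisely the difficulty. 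The correct route, which is what the paper's Step~\#1 does, is to use the time compactness of $B(\rho_n)$ rather than of $\uu_n$: from the renormalized equation \eqref{2.eq.Bn} and the uniform $L^\infty$ bound on $B(\rho_n)$, Proposition~\ref{A.B.prop2} gives $B(\rho_n)\to\overline{B(\rho)}$ in $C([0,T];L^\gamma_{\mathrm{weak}}(\Omega))$; since $L^\gamma(\Omega)\hookrightarrow H^{-1}(\Omega)$ compactly (as $\gamma>N/2$), this upgrades to strong convergence in $C([0,T];H^{-1}(\Omega))$, and pairing this against $\uu_n\rightharpoonup\uu$ weakly in $L^2(0,T;H^1_0(\Omega))$ yields $B(\rho_n)\uu_n\to\overline{B(\rho)}\,\uu$ in $\mathcal{D}'$, which combined with the uniform $L^2(0,T;L^{2^*}(\Omega))$ bound on $B(\rho_n)\uu_n$ identifies it as the weak limit. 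With this substitution, the rest of your argument is sound.
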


\begin{proof}
 \textit{Step \#1.} By Proposition~\ref{A.B.prop2} we have that $B(\rho_n)\to \overline{B(\rho)}$ in $C([0,T];L_{weak}^\gamma(\Omega))$. Then, since $L^\gamma(\Omega)$ is compactly imbedded in $H^{-1}(\Omega)$ due to our assumption that $\gamma > N/2$, we conclude that
\[
B(\rho_n)\uu_n \to \overline{B(\rho)}\, \uu
\]
in the sense of distributions. Then, we can take the limit as $n\to \infty$ in the equation
\begin{equation} \label{2.eq.Bn}  
   B(\rho_n)_t + \Div (B(\rho_n)\uu_n) + b(\rho_n)\Div \uu_n = 0,
  \end{equation}
with $b$ given by \eqref{1.eq.renorm2}, to obtain that the following equation is satisfied in the sense of distributions
\begin{equation*}%\label{2.eq.B} 
   \overline{B(\rho)}_t + \Div (\overline{B(\rho)}\, \uu) + \overline{b(\rho)\Div \uu }= 0.
  \end{equation*}
Here, $\overline{b(\rho)\Div \uu }$ is a weak limit of the sequence $b(\rho_n)\Div \uu_n$.

\textit{Step \#2.} In order to deduce \eqref{2.eq.BnFn} we proceed as follows. First, extending $\rho_n$ and $\uu_n$ by zero outside $\Omega$, we may take $\eta(\x-\y)$ as a test function in the continuity equation
\begin{equation*}%\label{2.e.continuitynRN}
\partial_t\rho_n + \Div(\rho_n\uu_n) = 0
\end{equation*}
to deduce an equation for $\rhoneta$. Then, we use this equation to deduce an equation for $F(\rhoveta)$. Finally, we use the resulting equation and combine it with equation \eqref{2.eq.Bn} to conclude. We omit the details.

\textit{Step \#3.} Lastly, in order to deduce equation \eqref{2.eq.BF} it suffices to take the limit as $n\to\infty$ in equation \eqref{2.eq.BnFn}, noting that each term converges weakly to its  counterpart.
\end{proof}

\begin{proof}[Proof of Theorem~\ref{2.thm.EVF}]
 \textit{Step \#0.} Let us first mention that, as pointed out in \cite{F}, it is enough to prove the  Theorem for functions $B$ in $C[0,\infty)\cap C^1(0,\infty)$ which are bounded and such that $b(z)$, given by \eqref{1.eq.renorm2}, is also continuous in $[0,\infty)$, bounded and satisfy $B(0)=b(0)=0$; that is, for functions that satisfy the conditions \eqref{1.eq.renorm1} and \eqref{1.eq.renorm2} of the definition of renormalized solutions. Indeed, we can always approximate any bounded and continuous function $B$ by a sequence $B_m$, $m=1,2,...$, of functions that satisfy these requirements, which are bounded uniformly with respect to $m$ and converge to $B-B(0)$ on compact subsets of $\mathbb{R}$, and then pass to the limit as $m\to\infty$ in \eqref{2.eq.thmEFV}.
 
\textit{Step \#1.} Let us prove \eqref{2.eq.thmEFV}. Let $\phi_n$ be given by \eqref{2.eqphi}. Then, seeing that 
\[\partial_t\phi = \varphi(t,\xx) \Delta^{-1} \nabla \partial_t\Big( B(\rho_n) F([\rho_n]^\eta) \Big) + \partial_t\varphi\ \Delta^{-1} \nabla \Big( B(\rho_n) F([\rho_n]^\eta) \Big),
\]
 we take $\phi_n$ as a test function in equation \eqref{2.eq.momentumn} to obtain
\begin{equation}\label{2.idn}
\int_0^T\int_\Omega P(\rho_n) \Div\phi_n\, d\x dt -\int_0^T\int_\Omega \mathbb{S}_n\cdot \nabla\phi_n \, d\x dt = \sum_{j=1}^5 I_j^n,
\end{equation}
where,
\begin{align*}
&I_1^n=\int_0^T \int_\Omega \varphi\rho_n\uu_n \Delta^{-1} \nabla\Div \Big( B(\rho_n) F([\rho_n]^\eta)\uu_n \Big)d\x dt,\\
&I_2^n=-\int_0^T \int_\Omega \varphi\rho_n\uu_n \Delta^{-1} \nabla h_n d\x dt,\\
&I_3^n = -\int_0^T \int_\Omega \rho_n\uu_n \varphi_t\Delta^{-1} \nabla \Big( B(\rho_n) F([\rho_n]^\eta)\Big) d\x dt,\\
&I_4^n= - \int_0^T\int_\Omega (\rho_n\uu_n\otimes\uu_n\cdot \nabla\varphi)\cdot\Delta^{-1} \nabla \Big( B(\rho_n) F([\rho_n]^\eta)\Big) d\x dt,\\
&I_5^n=-\int_0^T\int_\Omega \varphi\rho_n\uu_n\otimes\uu_n : \nabla\Delta^{-1} \nabla \Big( B(\rho_n) F([\rho_n]^\eta)\Big) d\x dt.
\end{align*}

Note that we have used Lemma~\ref{2.l.eqBF} in order to deal with the time derivative of $ B(\rho_n) F([\rho_n]^\eta)$, giving rise to $I_1^n$ and $I_2^n$.

Similarly, taking $\varphi$, given by \eqref{2.eqphi}, as a test function in equation \eqref{2.eq.momentumlimit} and using Lemma~\ref{2.l.eqBF} once again, we obtain that
\begin{equation}\label{2.idlimit}
\int_0^T\int_\Omega \overline{P}\,  \Div\phi\, d\x dt -\int_0^T\int_\Omega \mathbb{S}\cdot \nabla\phi \, d\x dt = \sum_{j=1}^5 I_j,
\end{equation}
where,
\begin{align*}
&I_1=\int_0^T \int_\Omega \varphi\rho\uu \Delta^{-1} \nabla\Div \Big( \overline{B(\rho)} F([\rho]^\eta)\uu \Big)d\x dt,\\
&I_2=-\int_0^T \int_\Omega \varphi\rho\uu \Delta^{-1} \nabla h d\x dt,\\
&I_3 = -\int_0^T \int_\Omega \rho\uu \varphi_t\Delta^{-1} \nabla \Big( \overline{B(\rho)} F([\rho]^\eta)\Big) d\x dt,\\
&I_4= - \int_0^T\int_\Omega (\rho\uu\otimes\uu\cdot \nabla\varphi)\cdot\Delta^{-1} \nabla \Big( \overline{B(\rho)} F([\rho]^\eta)\Big) d\x dt,\\
&I_5=-\int_0^T\int_\Omega \varphi\rho\uu\otimes\uu : \nabla\Delta^{-1} \nabla \Big( \overline{B(\rho)} F([\rho]^\eta)\Big) d\x dt.
\end{align*}

\textit{Step \#2.} We claim that $I_j^n\to I_j$ for $j=2,3,4$ and that $I_1^n+I_5^n\to I_1+I_5$ as $n\to \infty$. Indeed, using the boundedness of the function $B$, another application of Proposition~\ref{A.B.prop2} yields the convergence
\begin{equation}\label{2.BnFntoBF}
B(\rho_n)F(\rhoneta)\to \overline{B(\rho)}F(\rhoeta)\quad \text{in }C([0,T];L_{weak}^p(\Omega)), \quad\text{for any finite }p>1.
\end{equation}
Consequently, we may use the regularizing properties of the operator $\Delta^{-1}\nabla$ (see  Proposition~\ref{A.C.1}) to conclude that
\begin{equation}\label{2.BnFntoBFstrongC}
\Delta^{-1}\nabla\Big( B(\rho_n) F([\rho_n]^\eta)\Big)\to \Delta^{-1}\nabla\Big( \overline{B(\rho)} F([\rho]^\eta)\Big)\quad\text{in }C(K),
\end{equation}
for any compact $K\subseteq [0,T]\times\overline{\Omega}$. Thus, 
\begin{equation}
I_3^n\to I_3\quad\text{and}\quad I_4^n\to I_4^n \text{ as }n\to\infty.
\end{equation}

Similarly, by Lemma~\ref{2.l.eqBF} we have that $h_n\to h$ weakly in $L^2((0,T)\times\Omega)$ and so
\[
\Delta^{-1}\nabla h_n\to \Delta^{-1}\nabla h\quad\text{weakly in }L^2(0,T;H^1(\Omega)),
\]
which, together with \eqref{2.eq.rhonunstrong} implies that
\begin{equation}
I_2^n\to I_2\quad\text{as }n\to \infty.
\end{equation}

Regarding $I_1^n$ and $I_5^n$ we see that
\begin{multline*}
I_1^n+I_5^n = \int_0^T\int_\Omega \uu_n \cdot\Big[ B(\rho_n)F(\rhoneta)\nabla\Delta^{-1}\Div(\varphi\rho_n\uu_n) \\
 - \nabla\Delta^{-1}\nabla\Big(B(\rho_n)F(\rhoneta)\Big)\cdot\varphi\rho_n\uu_n  \Big] d\x dt,
\end{multline*}
and, likewise,
\begin{multline*}
I_1+I_5 = \int_0^T\int_\Omega \uu \cdot\Big[ \overline{B(\rho)}F(\rhoeta)\nabla\Delta^{-1}\Div(\varphi\rho\uu) \\
 - \nabla\Delta^{-1}\nabla\Big(\overline{B(\rho)}F(\rhoeta)\Big)\cdot\varphi\rho\uu  \Big] d\x dt,
\end{multline*}

From the relations \eqref{2.BnFntoBF}, \eqref{2.rhonuntorhouweak} and Theorem~\ref{A.C.3} we have that
\begin{multline*}
B(\rho_n)F(\rhoneta)\nabla\Delta^{-1}\Div(\varphi\rho_n\uu_n)  - \nabla\Delta^{-1}\nabla\Big(B(\rho_n)F(\rhoneta)\Big)\cdot\varphi\rho_n\uu_n \\
\to  \overline{B(\rho)}F(\rhoeta)\nabla\Delta^{-1}\Div(\varphi\rho\uu)  - \nabla\Delta^{-1}\nabla\Big(\overline{B(\rho)}F(\rhoeta)\Big)\cdot\varphi\rho\uu 
\end{multline*}
weakly in $L^r(\Omega)$ for any $1\leq r<\frac{2\gamma}{\gamma+1}$ and for each fixed $t\in[0,T]$. Since $\gamma>N/2$, we have that $L^r(\Omega)$ is compactly imbedded in $H^{-1}(\Omega)$ and thus,
\begin{multline*}
B(\rho_n)F(\rhoneta)\nabla\Delta^{-1}\Div(\varphi\rho_n\uu_n)  - \nabla\Delta^{-1}\nabla\Big(B(\rho_n)F(\rhoneta)\Big)\cdot\varphi\rho_n\uu_n \\
\to  \overline{B(\rho)}F(\rhoeta)\nabla\Delta^{-1}\Div(\varphi\rho\uu)  - \nabla\Delta^{-1}\nabla\Big(\overline{B(\rho)}F(\rhoeta)\Big)\cdot\varphi\rho\uu \\
\text{strongly in }L^2(0,T;H^{-1}(\Omega)).
\end{multline*}
Hence, using \eqref{2.convergen} we conclude that
\begin{equation}
I_1^n+I_5^n\to I_1+I_5\quad\text{as }n\to\infty,
\end{equation}
which proves the claim.

Looking back at identities \eqref{2.idn} and \eqref{2.idlimit} we obtain that
\begin{multline}\label{2.eq.limitEVP}
\lim_{n\to\infty}\int_0^T\int_\Omega P(\rho_n) \Div\phi_n\, d\x dt -\int_0^T\int_\Omega \mathbb{S}_n\cdot \nabla\phi_n \, d\x dt\\
= \int_0^T\int_\Omega \overline{P} \Div\phi\, d\x dt -\int_0^T\int_\Omega \mathbb{S}\cdot \nabla\phi \, d\x dt
\end{multline}

\textit{Step \#3.} Now, we claim that
\begin{equation}\label{2.limitEVP}
\lim_{n\to\infty}\int_0^T\int_\Omega \varphi q_n B(\rho_n) F([\rho_n]^\eta)d\x dt = \int_0^T\int_\Omega \varphi q \overline{B(\rho)} F([\rho]^\eta)d\x dt,
\end{equation}
where $q_n$ and $q$, which are given by \eqref{2.eq.EVP} and \eqref{2.eq.EVPn}, are the effective viscous pressure corresponding to the momentum equations \eqref{2.eq.momentumn} and \eqref{2.eq.momentumlimit}, respectively.

In light of \eqref{2.eq.limitEVP} and Lemma~\ref{2.l.eval}, it suffices to show that
\[
I_j^n\to I_j\quad\text{as }n\to\infty, 
\]
for $j=6,7$, where
\begin{align*}
&I_6^n=- \int_0^T\int_\Omega [\nabla\Delta^{-1}\nabla,\varphi]:(\mathbb{S}_n)B(\rho_n) F([\rho_n]^\eta)d\x dt,\\
&I_7^n= \int_0^T\int_\Omega \Big((P(\rho_n)\ \mathbb{I}-\mathbb{S}_n)\cdot\nabla\varphi\Big)\cdot \Delta^{-1}\nabla \Big(B(\rho_n) F([\rho_n]^\eta)\Big)d\x dt
\end{align*}
and, accordingly,
\begin{align*}
&I_6=- \int_0^T\int_\Omega [\nabla\Delta^{-1}\nabla,\varphi]:(\mathbb{S})\overline{B(\rho)} F([\rho]^\eta)d\x dt\\
&I_7= \int_0^T\int_\Omega \Big((\overline{P}\ \mathbb{I}-\mathbb{S})\cdot\nabla\varphi\Big)\cdot \Delta^{-1}\nabla \Big(B(\rho) F([\rho]^\eta)\Big)d\x dt
\end{align*}

In view of \eqref{2.BnFntoBFstrongC} it is clear that
\[
I_7^n\to I_7\quad\text{as }n\to\infty.
\]

Regarding $I_6^n$ we may invoke the regularizing properties of the commutator of Riesz transforms and the operator of multiplication, discovered by Coifman and Meyer in \cite{CM}, which we state in the Appendix, for convenience (see Theorem~\ref{A.C.2}). Indeed, by part (2) of Theorem~\ref{A.C.2} we have that
\begin{multline*}
[\nabla\Delta^{-1}\nabla,\varphi]:(\mathbb{S}_n) \\
\to [\nabla\Delta^{-1}\nabla,\varphi]:(\mathbb{S})\quad \text{weakly in }L^2(0,T;W^{1,q}(\mathbb{R}^N)), \text{ for any }1<q<2.
\end{multline*}
Therefore, noting that \eqref{2.BnFntoBF} implies that
\begin{multline}\label{2.eq.2.32}
B(\rho_n)F(\rhoneta)\\
\to \overline{B(\rho)}F(\rhoeta)\quad\text{strongly in }C([0,T];W^{-1,p}(\mathbb{R}^N)),\text{ for any }1<p<\infty,
\end{multline}
then, we readily conclude that
\[
I_6^n\to I_6\quad\text{as }n\to\infty.
\]

\textit{Step \#4.} At last, with identity \eqref{2.limitEVP} at hand, from the definition of $q_n$ and $q$, we see that all that is left to conclude the proof of the Theorem is to show that $I_8^n\to I_8$ as $n\to\infty$, where
\[
I_8^n= \int_0^T\int_\Omega\varphi [\nabla\Delta^{-1}\nabla,\mu(\rhoneta)]:\mathbb{D}(\uu_n)\ B(\rho_n)F(\rhoneta)d\x dt,
\]
and
\[
I_8^n= \int_0^T\int_\Omega\varphi [\nabla\Delta^{-1}\nabla,\mu(\rhoeta)]:\mathbb{D}(\uu)\ \overline{B}(\rho)F(\rhoeta)d\x dt.
\]

This, however, is another consequence of part (2) of Theorem~\ref{A.C.2}, which implies that
\begin{multline*}
[\nabla\Delta^{-1}\nabla,\mu(\rhoneta)]:\mathbb{D}(\uu_n)\\
\to [\nabla\Delta^{-1}\nabla,\mu(\rhoeta)]:\mathbb{D}(\uu)\quad\text{weakly in }L^2(0,T;W^{1,q}(\mathbb{R}^N)),\\
 \text{ for any }1<q<2,
\end{multline*}
which together with \eqref{2.eq.2.32} yields the desired convergence and concludes the proof.
\end{proof}

\begin{remark}
The proof of \eqref{2.eq.thmEFV2} may be carried out in the same way using the test functions
 \begin{equation*} %\label{2.eqphi2}
   \begin{cases}
    \phi_n(t,\xx) = \varphi(t,\xx) \Delta^{-1} \nabla \Big( B(\rho_n) \Big)(t,\xx) \text{ and} \\  
    \phi(t,\xx) = \varphi(t,\xx) \Delta^{-1} \nabla \Big( \overline{B(\rho)}  \Big)(t,\xx),
   \end{cases}
  \end{equation*}
  instead of \eqref{2.eqphi}. In fact, the proof in this case is slightly less complicated in the sense that Lemma~\ref{2.l.eqBF} is not necessary, since the equations satisfied by $B(\rho_n)$ and by $\overline{B(\rho)}$ are much simpler than those corresponding to $B(\rho_n)F(\rhoneta)$ and by $\overline{B(\rho)}F(\rhoeta)$.
\end{remark}

\section{Existence of solutions: Regularized problem}\label{S.regularized}

We now turn our attention to the problem of existence of solutions to the Navier-Stokes equations. To this end, let us consider the following auxiliary system
\begin{align}
&\partial_t \rho + \Div(\rho \uu) = \ve\Delta\rho   \label{2.eq.continuity-ve} \\
&\partial_t(\rho \uu) + \Div (\rho \uu \otimes \uu) + \nabla P = \Div \Sbb + \ve\nabla\uu\cdot\nabla\rho \label{2.eq.momentum-ve},
\end{align}
where, $\ve>0$ and $\delta>0$ are small constants,
\begin{equation}
  P=P^{(\delta)}(\rho) \stackrel{\text{def}}{=} A\rho^\gamma + \delta\, \rho^\beta. \label{2.e2}
\end{equation}
with the new exponent $\beta$ satisfying
\begin{equation}
  \beta > \operatorname{Max}\, \Big\{ 4,\frac{3}{2} N, \gamma \Big\}, \label{2.ebeta}
\end{equation}
  
Note that, aside from the artificial viscosity added to the continuity equation, intended to reguralize the density, two new terms are added to the continuity equation. Namely, an artificial pressure term $\delta \rho^\beta$, which will allow for improved integrability of the density, as well as the term $\ve\nabla\uu\cdot\nabla\rho$ which makes up for the unbalance in the energy of the system caused by the introduction of the viscosity in the continuity equation. This approximation of system \eqref{1.eq.continuity}-\eqref{1.eq.momentum} resembles the one introduced by Feireisl, Novotn\'{y}, and Petzeltov\'{a} in \cite{FNP}, where they study the Navier-Stokes equations with constant viscosity coefficients.

We will find solutions of the Navier-Stokes equations as a limit of the sequence of solutions to the regularized system taking the limit as $\ve\to 0$, leaving $\delta$ fixed first, and then taking the limit when $\delta\to 0$. To that end, we consider equations \eqref{2.eq.continuity-ve}-\eqref{2.eq.momentum-ve} with $\mathbb{S}$ given by \eqref{1.eq.VST} and viscosity coefficients of the form \eqref{1.eq.viscosities1} satisfying \eqref{1.eq.viscosities2}, subject to the following initial and boundary conditions
\begin{align}
&\nabla\rho(t,x)\cdot\nu=0, & x\in\po\Omega, 0<t<T\label{2.eq.boundarydata2-ve}\\
&\uu(t,x)=0, & x\in\po\Omega, 0<t<T,\label{2.eq.boundarydata1-ve}\\
&\rho(0,x)=\rho_0^{(\delta)}(x), \rho(0,x)\uu(0,x)=\m_0^{(\delta)}(x), &x\in \Omega,\label{2.eq.initialconds-ve}
\end{align}
where $\nu$ is the normal vector to $\partial\Omega$ and $\rho_0^{(\delta)}$ and $\m_0^{(\delta)}$ are, respectively, suitable approximations of the initial datum $\rho_0$ and $\m_0$ from the original system. Note that a Neumann boundary condition was added to the density in accordance with the introduction of the viscosity term in the continuity equation.

Existence and uniqueness of solutions can be proven as in \cite{FNP} through a Faedo-Galerkin method. More precisely, first the continuity equation \eqref{2.eq.continuity-ve} is solved globally in terms of the velocity field assuming that the latter is as smooth as needed. Then, for each $n\in\mathbb{N}$ we find a solution $\uu_n$ for the Faedo-Galerkin approximations of the momentum equation, namely equation \eqref{2.eq.momentum-ve} with $\rho=\rho_n$ being the solution of \eqref{2.eq.continuity-ve} in terms of $\uu_n$, satisfied in the weak sense with test functions in the finite-dimensional space $X_n\subseteq L^2(\Omega;\mathbb{R}^d)$ generated by the first $n$ eigenfunctions of the Laplacian in $H_0^1(\Omega)$. The Faedo-Galerkin approximations are solved using Schauder's fixed point theorem, which provides a unique local solution $u_n\in X_n$. Finally, it is shown that each $\uu_n$ may be prolonged globally and that the sequence $(\rho_n,\uu_n)$ has a limit point, which is the desired solution of the regularized system, based on a couple of global a priori estimates. Note that the strong convergence of the densities is straightforward due to the artificial viscosity term, which regularizes the continuity equation. In particular, the explicit dependence of the viscosity coefficients of the fluid on the density poses no extra difficulties at this stage. In summary, we have the following.

\begin{proposition} \label{2.thm}
Fix $\ve > 0$, and assume that initial datum $\rho_0^{(\delta)}$ and $\m_0^{(\delta)}$ satisfy the following properties:
\begin{enumerate}
  \item [(a)] $\rho_0^{(\delta)} \in C^\infty(\overline \Omega)$ and $\m_0^{(\delta)} \in C^\infty(\overline \Omega; \R^N)$;
  \item [(b)] there exists a constant $m > 0$ such that $\rho^{(\delta)}(\xx) > m$ everywhere;
  \item [(c)] $\frac{\partial \rho_0^{(\delta)}}{\partial \nu(\xx)}(\xx) = 0$ on $\partial \Omega$.
\end{enumerate}

Then, there exist a weak solution $(\rho^\ve,\rho^\ve\uu^\ve)$ of \eqref{2.eq.continuity-ve}-\eqref{2.eq.momentum-ve} \eqref{2.eq.boundarydata1-ve}-\eqref{2.eq.initialconds-ve} such that
 $$
 (\rho^{(\ve)}, \uu^{(\ve)}) \in \big(C([0,T];L^2(\Omega)) \cap L^2(0,T;H^1(\Omega))\big) \times L^2(0,T;H_0^1(\Omega;\R^N)).
 $$ 
Moreover, $\rho^{(\varepsilon)}\geq 0$ and the pair satisfies the energy estimates
\begin{multline}\label{2.energyestimate}
  \int_\Omega \Big[ \frac{1}{2} \rho^{(\ve)}(t) |\uu^{(\ve)}(t)|^2 + \frac{A}{\gamma-1} \rho^{(\ve)}(t)^\gamma + \frac{\delta}{\beta-1} \rho^{(\ve)}(t)^\beta \Big] \, d\xx\\
  + \int_0^t\int_\Omega \Big[ \mu(\rhoeta) | \nabla \uu^{(\ve)}|^2 + \Big(\lambda(\rhoeta) + \mu(\rhoeta)) (\Div \uu^{(\ve)})^2  \Big] \, d\xx\, dt' \\
  + \ve \int_0^t\int_\Omega \Big[ A \,\gamma\, (\rho^{(\ve)})^{\gamma - 1} + \delta \, (\rho^{(\ve)})^{\beta - 1} \Big] |\nabla \rho^{(\ve)}|^2 \,d\xx\,dt'  \\
  \leq \int_\Omega \Big[ \frac{1}{2} (\rho_0^{(\delta)})^{-1} |\m_0^{(\delta)}|^2 + \frac{A}{\gamma-1} (\rho_0^{(\delta)})^\gamma + \frac{\delta}{\beta-1} (\rho_0^{(\delta)})^\beta \Big] \, d\xx  \stackrel{\text{def}}{=} E_0^{(\delta)}, 
  \end{multline}
  and
  \begin{multline}\label{2.energyestimate2}
  \int_\Omega |\rho^{(\ve)}(t)|^2\, d\xx  + \ve \int_0^t \int_\Omega |\nabla \rho^{(\ve)} |^2\, d\xx\,dt'\\
  \leq \int_\Omega |\rho_0^{(\delta)}|^2\,d\xx + \int_0^t\int_\Omega \Div \uu^{(\ve)} (\rho^{(\ve)})^2\, d\xx\,dt' 
\end{multline}
for almost every $0 < t < T$. 
\end{proposition}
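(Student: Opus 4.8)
The plan is to follow the Faedo--Galerkin scheme of Feireisl, Novotn\'y and Petzeltov\'a \cite{FNP}; the only new feature here is that the viscosities are smooth functions of the mollified density $\rhoeta$, but since $\rho\mapsto\rhoeta$ is a smoothing operator that is compact from $L^\gamma(\Omega)$ into $C^k(\overline{\Omega})$ for every $k$, this introduces no essential difficulty. \emph{Step 1 (regularized continuity equation).} For a velocity $\uu$ in a suitable smooth class (in particular $\uu\in C([0,T];X_n)$ as below), the equation $\partial_t\rho+\Div(\rho\uu)=\ve\Delta\rho$ with the homogeneous Neumann condition \eqref{2.eq.boundarydata2-ve} and datum $\rho_0^{(\delta)}$ is linear and uniformly parabolic; standard $L^p$ and Schauder parabolic theory give a unique solution $\rho=\rho[\uu]$, mass is conserved, and the maximum principle yields $0<\underline{\rho}(t)\le\rho(t,\xx)\le\overline{\rho}(t)$, with bounds depending only on $m$, $\|\rho_0^{(\delta)}\|_{L^\infty}$ and $\int_0^t\|\Div\uu\|_{L^\infty}$. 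One also records that $\uu\mapsto\rho[\uu]$ is continuous (indeed locally Lipschitz) in the topologies used in the fixed-point argument below.

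\emph{Step 2 (Galerkin momentum equation).} Let $X_n=\operatorname{span}\{w_1,\dots,w_n\}\subseteq H_0^1(\Omega;\R^N)$ be spanned by the first $n$ eigenfunctions of the Dirichlet Laplacian. For each $n$ I would seek $\uu_n\in C([0,T_n];X_n)$, with $\rho_n:=\rho[\uu_n]$ from Step 1, such that for every $w\in X_n$
\begin{multline*}
\frac{d}{dt}\int_\Omega \rho_n\uu_n\cdot w\,d\xx\\
+ \int_\Omega\big[\Div(\rho_n\uu_n\otimes\uu_n)+\grad P^{(\delta)}(\rho_n)-\Div\Sbb_n-\ve\,\grad\uu_n\cdot\grad\rho_n\big]\cdot w\,d\xx=0,
\end{multline*}
with $\uu_n(0)$ the $X_n$-projection of $\m_0^{(\delta)}/\rho_0^{(\delta)}$. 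Writing $\uu_n=\sum_k c_k(t)w_k$, the Gram matrix $\big(\int_\Omega\rho_n w_j\cdot w_k\big)_{jk}$ is invertible since $\rho_n\ge\underline{\rho}>0$, so this is a closed ODE system for $(c_k)$ whose right-hand side is continuous in $(c_k)$ through the solution operator $\uu\mapsto\rho[\uu]$; Schauder's fixed point theorem then provides a local solution on some $[0,T_n]$.

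\emph{Step 3 (a priori estimates and global existence).} Testing the Galerkin equation with $\uu_n$ itself (admissible since $\uu_n(t)\in X_n$) and combining with the identities obtained by multiplying the continuity equation by $\tfrac12|\uu_n|^2$ and by $H'(\rho_n)$, where $H(s)=\tfrac{A}{\gamma-1}s^\gamma+\tfrac{\delta}{\beta-1}s^\beta$, one checks that the term $\ve\,\grad\uu_n\cdot\grad\rho_n$ exactly compensates the extra contribution produced by the $\ve\Delta\rho$ term in the continuity equation; this yields \eqref{2.energyestimate}. Multiplying the continuity equation by $\rho_n$ gives \eqref{2.energyestimate2}. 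Since $E_0^{(\delta)}$ is finite and independent of $n$ and $t$, these bounds control $\uu_n$ in $L^2(0,T;H_0^1)$ and $\rho_n$ in $L^\infty(0,T;L^\beta)$ uniformly in $n$, so the Galerkin solution cannot blow up in finite time and $T_n=T$.

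\emph{Step 4 (passage to the limit $n\to\infty$).} Because $\ve>0$ is fixed, parabolic regularity for the continuity equation upgrades the uniform bounds to bounds for $\rho_n$ in $L^2(0,T;H^2)\cap H^1(0,T;L^2)$, so by Aubin--Lions $\rho_n\to\rho$ strongly (e.g.\ in $C([0,T];L^2)$ and a.e.). Hence $P^{(\delta)}(\rho_n)\to P^{(\delta)}(\rho)$ strongly (the uniform $L^\beta$ bound gives equi-integrability), and $\rhoneta\to\rhoeta$ in $C([0,T];C^k(\overline{\Omega}))$, so $\mu(\rhoneta)\to\mu(\rhoeta)$ and $\lambda(\rhoneta)\to\lambda(\rhoeta)$ strongly; the momentum $\rho_n\uu_n$, being bounded and satisfying an equation, converges strongly in a negative Sobolev space, which together with $\uu_n\rightharpoonup\uu$ in $L^2(0,T;H_0^1)$ handles the convective term $\rho_n\uu_n\otimes\uu_n$ and the term $\ve\,\grad\uu_n\cdot\grad\rho_n$. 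Passing to the limit against each fixed $w_k$ and using density of $\bigcup_n X_n$ yields the weak formulation of \eqref{2.eq.continuity-ve}--\eqref{2.eq.momentum-ve}; weak lower semicontinuity of the convex energy and dissipation functionals preserves \eqref{2.energyestimate}, \eqref{2.energyestimate2} passes to the limit likewise, and $\rho^{(\ve)}\ge 0$ follows from the maximum principle at the level of $\rho_n$. I expect the only delicate points to be the bookkeeping of the exact cancellation of the two $\ve$-terms in Step 3 and the continuity of the solution operator in the fixed-point argument of Step 2; there is no genuinely new difficulty relative to \cite{FNP}, precisely because the density enters the viscosities only through the smoothing operator $\rho\mapsto\rhoeta$.
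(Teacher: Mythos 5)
Your proposal reproduces, with more detail than the paper itself supplies, exactly the Faedo--Galerkin/Schauder scheme of Feireisl--Novotn\'y--Petzeltov\'a that the paper cites and outlines: solve the parabolic continuity equation in terms of the velocity, set up the finite-dimensional momentum system, derive the energy estimates from testing with $\uu_n$ and from multiplying the continuity equation by $H'(\rho_n)$ and by $\rho_n$, then pass to the limit $n\to\infty$ using the strong compactness of $\rho_n$ furnished by the $\ve\Delta\rho$ term, with the only new ingredient (viscosities depending on $\rhoeta$) handled just as the paper notes, through the compactness of the mollification map. This is the same approach as the paper's proof.
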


Since the poof of this result, as outlined above, follows exactly as in \cite{FNP} (cf. \cite{F}) we %will only deduce the stated a priori estimates and 
omit %the rest of 
the details. Note that the assumptions on the initial values $\rho_0^{(\delta)}(\xx)$ and $\m_0^{(\delta)}(\xx)$ may be relaxed. However, we can always approximate functions $\rho_0$ and $\m_0$ in the class given by \eqref{1.eq.initialdata} by smoother initial data (depending on the parameter $\delta$ and independent of $\ve$) satisfying the hypotheses of Proposition~\ref{2.thm}, as will be shown later. 

As an immediate consequence of the energy inequalites \eqref{2.energyestimate}--\eqref{2.energyestimate2}, we conclude the following.

\begin{corollary}\label{2.corollary1}
Let  $(\rho^{(\ve)}, \uu^{(\ve)})$ be the solutions of the regularized system provided by Proposition~\ref{2.thm}. Then, the following estimates hold true uniformly in $\ve > 0$
\begin{align}
  &\sup_{0 < t < T} \int_\Omega \rho^{(\ve)}(t,\xx)^\gamma\,     d\xx \leq \text{(const.)}\,E_0^{(\delta)} \label{2.e6},\\
  &\sup_{0 < t < T} \int_\Omega \delta\, \rho^{(\ve)}(t,\xx)^\beta d\xx \leq \text{(const.)}\,E_0^{(\delta)} \label{2.e7},\\
  &\int_\Omega \rho^{(\varepsilon)}(t,\xx) \, d\xx =  \int_\Omega \rho_0^{(\delta)}(\xx) \, d\xx  \text{ for all $0 \leq t \leq T$}        \label{2.e8}, \\
  &\sup_{0 < t < T} \int_\Omega \rho^{(\ve)}(t,\xx) |\uu^{(\ve)}(t,\xx) |^2\, d\xx  \leq \text{(const.)}\,E_0^{(\delta)} \label{2.e9}, \\
  &\int_0^T \int_\Omega \big[ |\uu^{(\ve)}(t,\xx)|^2 + |\nabla \uu(t,\xx)|^2 \big]\, d\xx dt \leq \text{(const.)}\,E_0^{(\delta)}, \text{and} \label{2.e10} \\
  &\ve \int_0^T\int_\Omega | \nabla \rho^{(\ve)}(t,\xx) |^2\,d\xx dt \leq \text{(const. depend. on $E_0^{(\delta)}$ and $T$)}. \label{2.e11}
\end{align}
\end{corollary}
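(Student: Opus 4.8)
The plan is to derive Corollary~\ref{2.corollary1} directly from the energy inequalities \eqref{2.energyestimate}--\eqref{2.energyestimate2} of Proposition~\ref{2.thm}, exploiting the nonnegativity of every term on the left-hand side of \eqref{2.energyestimate} together with the sign conditions \eqref{1.eq.viscosities2} on the viscosity coefficients. First I would observe that, since $\rho^{(\ve)}\ge 0$ and $\mu(\rhoeta)\ge\mu_0>0$, $\lambda(\rhoeta)+\mu(\rhoeta)=\bigl(\lambda(\rhoeta)+\tfrac2N\mu(\rhoeta)\bigr)+\bigl(1-\tfrac2N\bigr)\mu(\rhoeta)\ge 0$ for $N\ge 2$, and $A\gamma(\rho^{(\ve)})^{\gamma-1}+\delta(\rho^{(\ve)})^{\beta-1}\ge 0$, each of the four integrals on the left of \eqref{2.energyestimate} is nonnegative. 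Hence each one is individually bounded by the right-hand side $E_0^{(\delta)}$, which is a constant independent of $\ve$ (it depends only on $\delta$ and the fixed approximate initial data $\rho_0^{(\delta)}$, $\m_0^{(\delta)}$). This immediately gives \eqref{2.e6} (from the $\tfrac{A}{\gamma-1}\rho^\gamma$ term), \eqref{2.e7} (from the $\tfrac{\delta}{\beta-1}\rho^\beta$ term) and \eqref{2.e9} (from the kinetic energy term), after absorbing the constants $\tfrac{\gamma-1}{A}$, $\tfrac{\beta-1}{1}$, and $2$ into the ``(const.)''.

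Next, for the mass conservation identity \eqref{2.e8}, I would integrate the regularized continuity equation \eqref{2.eq.continuity-ve} over $\Omega$ in space: since $\uu^{(\ve)}\in H_0^1(\Omega;\R^N)$ vanishes on $\partial\Omega$, the term $\int_\Omega\Div(\rho^{(\ve)}\uu^{(\ve)})\,d\xx$ vanishes by the divergence theorem, and $\ve\int_\Omega\Delta\rho^{(\ve)}\,d\xx=0$ by the Neumann boundary condition \eqref{2.eq.boundarydata2-ve}. Therefore $\tfrac{d}{dt}\int_\Omega\rho^{(\ve)}(t,\xx)\,d\xx=0$, which yields \eqref{2.e8} after noting $\rho^{(\ve)}(0,\cdot)=\rho_0^{(\delta)}$. (One should check this computation is justified at the regularity level asserted in Proposition~\ref{2.thm}; a standard mollification or approximation argument handles the minor technicality.)

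For \eqref{2.e10}, the bound on $\nabla\uu^{(\ve)}$ in $L^2((0,T)\times\Omega)$ comes from the second line of \eqref{2.energyestimate}: dropping the nonnegative $(\lambda+\mu)(\Div\uu)^2$ term, $\mu_0\int_0^T\!\!\int_\Omega|\nabla\uu^{(\ve)}|^2\,d\xx\,dt'\le\int_0^T\!\!\int_\Omega\mu(\rhoeta)|\nabla\uu^{(\ve)}|^2\le E_0^{(\delta)}$, so $\|\nabla\uu^{(\ve)}\|_{L^2}^2\le\mu_0^{-1}E_0^{(\delta)}$; the bound on $\|\uu^{(\ve)}\|_{L^2((0,T)\times\Omega)}$ follows from this by the Poincar\'e inequality (valid since $\uu^{(\ve)}(t,\cdot)\in H_0^1(\Omega)$). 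Finally, \eqref{2.e11} is obtained from the third line of \eqref{2.energyestimate}: using \eqref{2.e7} one has $\sup_t\|\rho^{(\ve)}(t)\|_{L^\beta}$ bounded in terms of $E_0^{(\delta)}/\delta$, hence $\rho^{(\ve)}\ge$ is under control, but more directly one simply integrates \eqref{2.energyestimate2} in $t$ over $(0,T)$: $\ve\int_0^T\!\!\int_\Omega|\nabla\rho^{(\ve)}|^2\le\int_\Omega|\rho_0^{(\delta)}|^2+\int_0^T\!\!\int_\Omega\Div\uu^{(\ve)}(\rho^{(\ve)})^2$, and the last term is estimated by H\"older's inequality as $\|\Div\uu^{(\ve)}\|_{L^2((0,T)\times\Omega)}\|\rho^{(\ve)}\|_{L^4((0,T)\times\Omega)}^2$, both factors being bounded by powers of $E_0^{(\delta)}$ (the first by \eqref{2.e10}, the second by interpolation between \eqref{2.e6} in time and \eqref{2.e7} to reach $L^4$, using $\beta>\gamma$ and $\beta\ge 4$) together with a factor of $T^{1/2}$; this gives a bound depending on $E_0^{(\delta)}$ and $T$, as stated. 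I do not expect any genuine obstacle here: the only point requiring a modicum of care is justifying the integration-by-parts manipulations at the stated regularity in \eqref{2.e8} and making the $L^4$ interpolation in \eqref{2.e11} explicit, but both are routine.
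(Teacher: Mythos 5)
Your proof is correct and fills in, in the way the paper clearly intends, what the paper itself dismisses as an ``immediate consequence of the energy inequalities'' \eqref{2.energyestimate}--\eqref{2.energyestimate2}: nonnegativity of each term on the left of \eqref{2.energyestimate} (using $\mu\ge\mu_0>0$ and $\lambda+\mu\ge 0$ from \eqref{1.eq.viscosities2}) gives \eqref{2.e6}, \eqref{2.e7}, \eqref{2.e9}, \eqref{2.e10}; integrating the regularized continuity equation gives \eqref{2.e8}; and \eqref{2.e11} comes from \eqref{2.energyestimate2} once the right-hand side is controlled via H\"older together with \eqref{2.e10} and the $L^\infty_t L^\beta_x$ bound. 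One small simplification: for the $L^4((0,T)\times\Omega)$ bound on $\rho^{(\ve)}$ you do not need to interpolate between \eqref{2.e6} and \eqref{2.e7} --- since $\beta>4$, \eqref{2.e7} alone gives $\rho^{(\ve)}$ bounded in $L^\infty(0,T;L^\beta(\Omega))\hookrightarrow L^4((0,T)\times\Omega)$ with a constant depending on $E_0^{(\delta)}$, $\delta$, $T$, $|\Omega|$; likewise, \eqref{2.energyestimate2} need only be evaluated at $t=T$ (or its supremum over $t$ taken), not integrated again in $t$.
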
 

Note that \eqref{2.e10} is enabled by \eqref{1.eq.viscosities2} and \eqref{2.e11} is uniform in $\ve$ due to \eqref{2.ebeta}.

Next we are going to find a sequence $\ve_n \to 0$ (leaving $\d$ fixed) so that $\rho^{(\ve_n)}$ and $\uu^{(\ve_n)}$ converge to a solution of the original system, based on the a priori estimates above. However, a further a priori estimate on the densities is needed as we cannot assert that $P(\rho^{(\ve)})$ has a weakly convergent subsequence as $\ve\to 0$. Indeed, with the estimates available so far we only have a bound for the pressure in $L^\infty(0,T;$ $L^1(\Omega))$. Fortunately, the improved integrability estimates for the density from \cite{FNP,F} can be repeated line by line in our present case. The idea is to obtain an $L^1$ estimate for $\rho^{(\ve)}P(\rho^{(\ve)})$ by taking 
$$
\Div^{-1} \Big[\rho - \frac{1}{|\Omega|} \int_\Omega \rho_0(\yy)\,d\yy \Big](t,\xx)
$$
as a test function in the momentum equation \eqref{2.eq.momentum-ve}, where $\Div^{-1} : \{ f \in L^p(\Omega); \int_\Omega f \, d\xx = 0 \} \rightarrow W_0^{1,p}(\Omega;\R^N)$ ($1<p<\infty$) denotes the so-called Bogovskii operator (see \cite{Bg,BoSo}). Note that here we have utilized the conservation of the mass \eqref{2.e8}. We omit the details.

\begin{lemma} \label{2.lemmarho}
  There exists a constant $C = C(\beta, E_0^{(\delta)}, \delta), T) > 0$, independent of $\ve$, such that
  \begin{equation*}
    \int_0^T \int_{\Omega} \Big[ \rho^{(\ve)}(t',\xx)^{\gamma + 1} + \rho^{(\ve)}(t',\xx)^{\beta + 1} \Big]\,d\xx\,dt' \leq C. %\label{2.e12}
  \end{equation*}
\end{lemma}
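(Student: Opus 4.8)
The plan is to use the Bogovskii operator trick, exactly as in \cite{FNP,F}, adapted to handle the density-dependent viscosities. First I would introduce the test function
\[
\varphi(t,\xx) := \psi(t)\,\Div^{-1}\Big[\rho^{(\ve)} - \tfrac{1}{|\Omega|}\int_\Omega \rho^{(\ve)}(t,\yy)\,d\yy\Big](t,\xx),
\]
where $\psi \in C_c^\infty(0,T)$ with $0 \le \psi \le 1$; by the conservation of mass \eqref{2.e8} the argument of $\Div^{-1}$ has zero mean, so $\varphi \in W_0^{1,p}(\Omega;\R^N)$ for the relevant $p$ and is an admissible test function in the momentum equation \eqref{2.eq.momentum-ve}. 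Substituting $\varphi$ into \eqref{2.eq.momentum-ve} and integrating by parts produces, on the left, the term $\int_0^T\!\psi\int_\Omega P^{(\delta)}(\rho^{(\ve)})\,\rho^{(\ve)}\,d\xx\,dt = \int_0^T\!\psi\int_\Omega \big(A(\rho^{(\ve)})^{\gamma+1} + \delta(\rho^{(\ve)})^{\beta+1}\big)\,d\xx\,dt$ plus a harmless lower-order term coming from the mean value (controlled by \eqref{2.e6}--\eqref{2.e8}); on the right one collects the remaining terms, which must all be bounded by $C(\beta,E_0^{(\delta)},\delta,T)$ using only the $\ve$-uniform estimates of Corollary~3.2 and Lemma~3.1.

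The core of the argument is estimating those right-hand side terms. The key structural facts are the Bogovskii operator bounds: $\Div^{-1}$ maps $L^p(\Omega)$ (with zero mean) into $W_0^{1,p}(\Omega;\R^N)$ boundedly for $1<p<\infty$, and moreover $\Div^{-1}$ applied to an expression of the form $\Div \gbf$ maps $L^q$ into $L^q$. I would split the right-hand side into: (i) the convection term $\int \rho^{(\ve)}\uu^{(\ve)}\otimes\uu^{(\ve)} : \nabla\varphi$, bounded via Hölder using $\rho^{(\ve)}|\uu^{(\ve)}|^2 \in L^\infty(0,T;L^1)$ from \eqref{2.e9} combined with Sobolev embedding of $\uu^{(\ve)}$ and the $W^{1,p}$ bound on $\varphi$; (ii) the time-derivative term $\int \rho^{(\ve)}\uu^{(\ve)}\cdot \partial_t\varphi$, where one uses the continuity equation \eqref{2.eq.continuity-ve} to rewrite $\partial_t\Div^{-1}\rho^{(\ve)}$ in terms of $\Div^{-1}\Div(\rho^{(\ve)}\uu^{(\ve)} - \ve\nabla\rho^{(\ve)})$ plus the mean-value correction, so $\partial_t\varphi$ stays in $L^\infty(0,T;L^q)$ for a suitable $q$ — here the $\ve$-term generates $\ve\int\rho^{(\ve)}\uu^{(\ve)}\cdot\Div^{-1}\Div\nabla\rho^{(\ve)}$ which is bounded by $\ve^{1/2}$ times the square root of \eqref{2.e11} times norms of $\rho^{(\ve)}\uu^{(\ve)}$, hence $\ve$-uniformly bounded; (iii) the viscous term $\int \Sbb_n : \nabla\varphi$, where $\Sbb = \lambda(\rhoeta)(\Div\uu)\Ibf + 2\mu(\rhoeta)\mathbb{D}(\uu)$ — crucially $\lambda(\rhoeta)$ and $\mu(\rhoeta)$ are bounded in $L^\infty((0,T)\times\Omega)$ since the mollification $[\,\cdot\,]^\eta$ maps the $L^\infty(0,T;L^1\cap L^\gamma)$-bounded density into a set bounded in $C(\overline\Omega)$ and $\lambda,\mu$ are continuous, so $\Sbb \in L^2((0,T)\times\Omega)$ $\ve$-uniformly and this pairs against $\nabla\varphi \in L^2$; (iv) the extra regularization term $\ve\int\nabla\uu^{(\ve)}\cdot\nabla\rho^{(\ve)}\cdot\varphi$, bounded by $\ve\|\nabla\uu^{(\ve)}\|_{L^2}\|\nabla\rho^{(\ve)}\|_{L^2}\|\varphi\|_{L^\infty}$, again $\ve$-uniform by \eqref{2.e10}--\eqref{2.e11}. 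Finally, one must close the estimate: the integrability exponents of $\varphi$ and $\nabla\varphi$ depend on the exponent of the density we feed into $\Div^{-1}$, and one feeds in $(\rho^{(\ve)})^\theta$ for a small $\theta$ to be chosen (the standard choice makes the left-hand side $\int (\rho^{(\ve)})^{\gamma+\theta}$), then a bootstrap/interpolation argument using \eqref{2.e6}--\eqref{2.e7} and the fact $\beta > \tfrac32 N$ upgrades to $\theta = 1$; condition \eqref{2.ebeta} on $\beta$ is exactly what makes the exponents fit.

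I expect the main obstacle to be bookkeeping the integrability exponents so that every term genuinely closes $\ve$-uniformly: one needs $\rho^{(\ve)} \in L^\infty(0,T;L^\beta)$ to control $\varphi$ in a space good enough to pair with $\rho^{(\ve)}\uu^{(\ve)}\otimes\uu^{(\ve)}$ (whose integrability, through Sobolev embedding of $H_0^1$, degrades in high dimensions), and it is precisely the requirement $\beta > \max\{4,\tfrac32 N,\gamma\}$ that guarantees this. Relative to \cite{FNP,F} the only new point is item (iii): one must verify that replacing the constant viscosities by $\mu(\rhoeta),\lambda(\rhoeta)$ changes nothing, because these coefficients are a priori bounded in $L^\infty$ uniformly in $\ve$ — a consequence of the smoothing by $\eta$ together with the $\ve$-uniform mass and energy bounds \eqref{2.e6}--\eqref{2.e9}. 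Since every individual estimate is then routine and identical in spirit to \cite{FNP,F}, I would, as the authors do, carry out this exponent bookkeeping carefully once and otherwise refer to \cite{FNP,F} for the remaining details.
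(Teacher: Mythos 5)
Your proposal follows the paper's route exactly: the authors also take the Bogovskii test function $\Div^{-1}\big[\rho^{(\ve)} - \tfrac{1}{|\Omega|}\int_\Omega \rho_0\,d\yy\big]$ in the momentum equation, observe that the only new feature relative to \cite{FNP,F} is the $\ve$-uniform $L^\infty$ bound on the mollified viscosities $\lambda([\rho^{(\ve)}]^\eta)$, $\mu([\rho^{(\ve)}]^\eta)$, and otherwise defer to \cite{FNP,F} for the exponent bookkeeping. The one small imprecision is your closing remark about feeding in $(\rho^{(\ve)})^\theta$ with small $\theta$ and bootstrapping up to $\theta=1$: at this stage that detour is unnecessary, since \eqref{2.e7} already gives $\rho^{(\ve)} \in L^\infty(0,T;L^\beta(\Omega))$ uniformly in $\ve$ for fixed $\delta$ with $\beta > \max\{4,\tfrac32 N,\gamma\}$, so $\theta = 1$ closes directly; the small-exponent variant is what is actually needed later, at the $\delta\to 0$ stage of Lemma~\ref{3.lemmarho}.
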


\begin{remark}
The similar, although weaker, result
$$
\int_a^b \int_{\omega} \Big[ \rho^{(\ve)}(t',\xx)^{\gamma + 1} + \rho^{(\ve)}(t',\xx)^{\beta + 1} \Big]\,d\xx\,dt' \leq C(\beta, E_0^{(\delta)}, \delta, T, \cO),
$$
where $\cO \subset\subset \Omega$ and $0<a<b<T$, may be proven if one chooses alternatively the test function
$$
\Delta^{-1}\nabla \Big[ \psi  \,. \,(\theta_\kappa \star \rho^{(\ve)}) \Big],
$$
with $\psi \in C_c^\infty(\Omega)$ being a localization function, and $(\theta_\kappa)_{\kappa > 0}$ being mollifiers in $C_c^\infty(\R^N)$ (see \cite{F}, lemma~7.6). The operator $\nabla \Delta^{-1}$ will play an important role in the sequence of this paper, and some of its most crucial properties are discussed in the Appendix~\ref{ApdxC}.
\end{remark}

\section{Vanishing viscosity limit}\label{S.veto0}

Now we move on to the vanishing viscosity limit. With the a priori estimates from the previous Section at hand we can find weakly convergent subsequences as $\ve\to 0$ (leaving $\delta$ fixed) so that the limit functions  $\rho$ and $\uu$ solve the Navier-Stokes equations \eqref{1.eq.continuity}-\eqref{1.eq.momentum} with $P=\overline{P(\rho)}$, where $\overline{P(\rho)}$ is a weak limit of the sequence $\{P(\rho^{(\ve)})\}$. As pointed out before, the main difficulty in this scheme is to show that $\overline{P(\rho)}=P(\rho)$, which turns out to be equivalent to showing strong convergence of the densities. To achieve this, we use a variant of Theorem~\ref{2.thm.EVF}. 

We begin by proving the following proposition, which  follows directly from Corollary~\ref{2.corollary1} and Lemma~\ref{2.lemmarho}.

\begin{proposition} \label{2.thmconvs}
  Fixing both $\rho_0^{(\delta)}$ and $\m_0^{(\delta)}$ and passing to subsequences $\ve_n \rightarrow 0$ if necessary, we may assume that there exist $\rho \in L^\infty(0,T;L^\gamma(\Omega))$, $\overline{P(\rho)} \in L^{\frac{\beta+1}{\beta}}((0,T)\X\Omega)$ and $\uu \in L^2(0,T;H_0^1(\Omega; \R^N))$, such that 
\begin{equation} \label{2.convergences}
\begin{cases}
  \rho^{(\ve)}                              \rightharpoonup \rho               &\text{ weakly in } L^{\beta+1}((0,T)\times\Omega), \\
  P(\rho^{(\ve)})                           \rightharpoonup \overline{P(\rho)} &\text{ weakly in } L^{\frac{\beta+1}{\beta}}((0,T)\times\Omega), \\
  \uu^{(\ve)}                               \rightharpoonup \uu                &\text{ weakly in } L^2(0,T;H_0^1(\Omega; \R^N)), \\
  \ve \nabla \uu^{(\ve)}\nabla \rho^{(\ve)} \rightarrow 0                      &\text{ strongly in } L^1((0,T)\times\Omega; \R^N), \text{ and} \\
  \ve \Delta \rho^{(\ve)}                   \rightarrow 0                      &\text{ strongly in $L^2(0,T; H^{-1}(\Omega))$}.
\end{cases}
\end{equation}

Moreover, we have
\begin{enumerate}
  \item[(i)]  $\rho^{(\ve)} \rightarrow \rho$ in $C([0,T];L_{\text{weak}}^{\gamma}(\Omega))$;
  \item[(ii)] $\lambda(\rhoveta) \rightarrow \lambda(\rhoeta)$ and $\mu(\rhoveta) \rightarrow \mu(\rhoeta)$ strongly in $C([0,T];C^\infty(\overline{\Omega}))$ %(see Remark B.1);
  \item[(iii)] $\rho^{(\ve)}\uu^{(\ve)} \rightarrow \rho \uu$ in $C([0,T];L_{\text{weak}}^{\frac{2\gamma}{\gamma + 1}}(\Omega))$;
  \item[(iv)] $\rho^{(\ve)} \uu^{(\ve)}\otimes\uu^{(\ve)} \rightharpoonup \rho\uu\otimes\uu$ weakly in $L^{2}(0,T; L^s(\Omega;\mathbb{R}^{N\times N}))$ for $1 < s < \infty$ satisfying $\frac{1}{s} = \frac{\gamma+1}{2\gamma} + \frac{1}{2^*}$ if $N\geq 3$, and $\frac{1}{s} > \frac{\gamma+1}{2\gamma}$ if $N = 2$.
\end{enumerate}

Finally, the pair $(\rho,\uu)$ is a weak solution to
  \begin{align} \label{2.system2}
&\partial_t \rho + \Div(\rho \uu) = 0, \\    
    &\partial_t \big(\rho \uu\big) + \Div \big(\rho \uu \otimes \uu \big) + \nabla \overline{P(\rho)} = \Div \Sbb,
\end{align}
with initial and boundary conditions
\begin{align}
    &\uu(t,x) = 0 &\text{ for $0 < t < T$ and  $\xx \in \partial\Omega$, }\label{2.system2conds0} \\
    &\rho(0,x) = \rho^{(\delta)}_0(x),\quad (\rho \uu)(0,x) = \m^{(\delta)}_0(x) &\text{ for }\xx \in \Omega,\label{2.system2conds}
  \end{align}
where $\Sbb = \lambda(\rhoeta)\, (\Div \uu) \mathbb{I} + 2\mu(\rhoeta)\, \mathbb{D}(\uu)$, and satisfies the energy estimate
\begin{multline}\label{2.energy}
  \int_\Omega \Big[ \frac{1}{2} \rho(t) |\uu(t)|^2 + \frac{A}{\gamma-1} \rho(t)^\gamma + \frac{\delta}{\beta-1} \rho(t)^\beta \Big] \, d\xx \\
  + \int_0^t\int_\Omega \Big[ \mu(\rhoeta) | \nabla \uu|^2 + \Big(\lambda(\rhoeta) + \mu(\rhoeta)\Big) (\Div \uu)^2  \Big] \, d\xx\, dt'  \\
  \leq \int_\Omega \Big[ \frac{1}{2} (\rho_0^{(\delta)})^{-1} |\m_0^{(\delta)}|^2 + \frac{A}{\gamma-1} (\rho_0^{(\delta)})^\gamma + \frac{\delta}{\beta-1} (\rho_0)^{(\delta)})^\beta \Big] \, d\xx = E_0^{(\delta)}
\end{multline}
for almost every $0 < t < T$.
\end{proposition}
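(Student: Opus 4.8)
The plan is to run a by-now-classical compactness-and-passage-to-the-limit argument in the spirit of \cite{FNP,F}; the only place where the density-dependent viscosities intervene is the treatment of the coefficients $\lambda(\rhoveta),\mu(\rhoveta)$, and I stress from the outset that the genuinely hard issue of the whole program, namely $\overline{P(\rho)}=P(\rho)$, is \emph{not} part of this statement. First I would extract the limits in \eqref{2.convergences}: Lemma~\ref{2.lemmarho} bounds $\rho^{(\ve)}$ in $L^{\beta+1}((0,T)\times\Omega)$ and, since $\gamma<\beta$, also bounds $P(\rho^{(\ve)})=A(\rho^{(\ve)})^\gamma+\delta(\rho^{(\ve)})^\beta$ in $L^{(\beta+1)/\beta}((0,T)\times\Omega)$, while \eqref{2.e10} bounds $\uu^{(\ve)}$ in $L^2(0,T;H_0^1)$; passing to a subsequence gives the first three lines of \eqref{2.convergences}. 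The last two are immediate from Cauchy--Schwarz together with \eqref{2.e10}--\eqref{2.e11}, since $\|\ve\nabla\uu^{(\ve)}\nabla\rho^{(\ve)}\|_{L^1}\le\sqrt\ve\,\|\nabla\uu^{(\ve)}\|_{L^2}\,\|\sqrt\ve\,\nabla\rho^{(\ve)}\|_{L^2}$ and $\|\ve\Delta\rho^{(\ve)}\|_{L^2(0,T;H^{-1})}\le\sqrt\ve\,\|\sqrt\ve\,\nabla\rho^{(\ve)}\|_{L^2(0,T;L^2)}$, both $O(\sqrt\ve)$.

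Next I would establish (i)--(iv). For (i), I rewrite the approximate continuity equation as $\partial_t\rho^{(\ve)}=-\Div(\rho^{(\ve)}\uu^{(\ve)})+\ve\Delta\rho^{(\ve)}$; by H\"older and \eqref{2.e6}, \eqref{2.e9} the flux $\rho^{(\ve)}\uu^{(\ve)}$ is bounded in $L^\infty(0,T;L^{2\gamma/(\gamma+1)}(\Omega))$, so the right-hand side is bounded in $L^2(0,T;W^{-1,q}(\Omega))$ for a suitable $q>1$, and Proposition~\ref{A.B.prop2} yields $\rho^{(\ve)}\to\rho$ in $C([0,T];L^\gamma_{\mathrm{weak}}(\Omega))$ along a subsequence (the limit being $\rho$ by uniqueness of weak limits). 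Item (ii) then repeats the argument of Section~\ref{S.EVF}: the map $f\mapsto[f]^\eta$ is compact from $L^\gamma(\Omega)$ into every $C^k(\overline\Omega)$, hence $[\rho^{(\ve)}]^\eta\to[\rho]^\eta$ in $C([0,T];C^k(\overline\Omega))$ for all $k$, and the smoothness of $\lambda,\mu$ gives the conclusion. For (iii) I apply Proposition~\ref{A.B.prop2} once more, now to $\rho^{(\ve)}\uu^{(\ve)}$ — the momentum equation \eqref{2.eq.momentum-ve} bounds $\partial_t(\rho^{(\ve)}\uu^{(\ve)})$ in $L^1(0,T;W^{-1,r}(\Omega))$ for a suitable $r>1$ — and identify the limit as $\rho\uu$: the compact embedding $L^\gamma(\Omega)\hookrightarrow\hookrightarrow H^{-1}(\Omega)$ upgrades (i) to $\rho^{(\ve)}\to\rho$ in $C([0,T];H^{-1})$, which together with $\uu^{(\ve)}\rightharpoonup\uu$ in $L^2(0,T;H_0^1)$ forces $\rho^{(\ve)}\uu^{(\ve)}\to\rho\uu$ in the sense of distributions. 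Item (iv) is analogous: because $\gamma>N/2$ one has the compact embedding $L^{2\gamma/(\gamma+1)}(\Omega)\hookrightarrow\hookrightarrow H^{-1}(\Omega)$, so (iii) improves to $\rho^{(\ve)}\uu^{(\ve)}\to\rho\uu$ in $C([0,T];H^{-1})$, and pairing with $\uu^{(\ve)}\rightharpoonup\uu$ in $L^2(0,T;L^{2^*})$ identifies the distributional limit of $\rho^{(\ve)}\uu^{(\ve)}\otimes\uu^{(\ve)}$ as $\rho\uu\otimes\uu$, while a H\"older estimate places the sequence in $L^2(0,T;L^s(\Omega))$ with the stated $s$, so the convergence holds weakly there.

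With (i)--(iv) in hand, the passage to the limit in \eqref{2.eq.continuity-ve}--\eqref{2.eq.momentum-ve} is routine: the two $\ve$-terms vanish by the previous step, $\nabla P(\rho^{(\ve)})\rightharpoonup\nabla\overline{P(\rho)}$, and the approximate stress tensor $\Sbb^{(\ve)}=\lambda(\rhoveta)(\Div\uu^{(\ve)})\mathbb{I}+2\mu(\rhoveta)\mathbb{D}(\uu^{(\ve)})$ converges weakly in $L^2((0,T)\times\Omega)$ to $\Sbb=\lambda(\rhoeta)(\Div\uu)\mathbb{I}+2\mu(\rhoeta)\mathbb{D}(\uu)$ because a coefficient converging in $C([0,T]\times\overline\Omega)$ times an $L^2$-weakly convergent factor converges weakly; the initial data \eqref{2.system2conds} pass to the limit thanks to the $C([0,T];L^p_{\mathrm{weak}})$ convergences of (i) and (iii). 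Finally, I obtain \eqref{2.energy} from \eqref{2.energyestimate} by dropping the nonnegative $\ve$-dissipation term and taking $\liminf_{\ve\to0}$ of the rest: the kinetic energy is weakly lower semicontinuous by convexity of $(\rho,\m)\mapsto|\m|^2/\rho$, the internal-energy terms by lower semicontinuity of the $L^\gamma$ and $L^\beta$ norms (available pointwise in $t$ through (i)), and the viscous dissipation $\int_0^t\!\!\int_\Omega[\mu(\rhoveta)|\nabla\uu^{(\ve)}|^2+(\lambda+\mu)(\rhoveta)(\Div\uu^{(\ve)})^2]$ by replacing the coefficients by $\mu(\rhoeta)$ and $(\lambda+\mu)(\rhoeta)$ up to a uniform $o(1)$ in $C([0,T]\times\overline\Omega)$ and invoking weak $L^2$-lower semicontinuity of the resulting positive semidefinite quadratic form, whose nonnegativity is guaranteed by \eqref{1.eq.viscosities2}.

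The step I expect to be the main obstacle is item (iv), equivalently the identification of the weak limit of the convective term: this is exactly where the hypothesis $\gamma>N/2$ is used in an essential way, namely to guarantee the compact embedding $L^{2\gamma/(\gamma+1)}(\Omega)\hookrightarrow\hookrightarrow H^{-1}(\Omega)$ that turns the time-equicontinuity of $\rho^{(\ve)}\uu^{(\ve)}$ into strong convergence in $C([0,T];H^{-1})$; everything else is bookkeeping with the a priori bounds of Corollary~\ref{2.corollary1} and Lemma~\ref{2.lemmarho} and the compactness of the mollification operator. It should be stressed that this Proposition does \emph{not} assert $\overline{P(\rho)}=P(\rho)$ — the strong convergence of the densities is settled later, using Theorem~\ref{2.thm.EVF}.
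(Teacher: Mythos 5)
Your proposal is correct and follows essentially the same path as the paper's proof: extract weak limits from the a priori bounds of Corollary~\ref{2.corollary1} and Lemma~\ref{2.lemmarho}, obtain (i)--(iv) via Proposition~\ref{A.B.prop2} together with the compact embeddings $L^\gamma(\Omega)\hookrightarrow\hookrightarrow H^{-1}(\Omega)$ and $L^{2\gamma/(\gamma+1)}(\Omega)\hookrightarrow\hookrightarrow H^{-1}(\Omega)$ (both granted by $\gamma>N/2$) and the compactness of the mollification $f\mapsto[f]^\eta$, then pass to the limit in the weak formulation and in the energy inequality by lower semicontinuity. You merely flesh out details that the paper leaves terse (the explicit time-derivative bounds feeding Proposition~\ref{A.B.prop2}, the $O(\sqrt\ve)$ estimates for the artificial terms, and a pointwise-in-$t$ rather than distributional treatment of the energy inequality), and you correctly note that the first two lines of \eqref{2.convergences} rely on Lemma~\ref{2.lemmarho} and not just Corollary~\ref{2.corollary1}.
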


\begin{remark}
Let us mention that, by weak solution to \eqref{2.system2}, it should be understood that $(\rho, \uu)$ verifies
  \begin{align}
    -\int_0^T\int_\Omega &\rho \uu \cdot \frac{\partial \varphi}{\partial t}\, d\xx\,dt - \int_0^T \int_\Omega \rho \uu \otimes \uu : \nabla \varphi\, d\xx dt  - \int_0^T \int_\Omega \overline{P(\rho)} \Div \varphi\, d\xx dt \nonumber \\
    &\quad\quad\quad\quad\quad\quad + \int_0^T\int_\Omega \mathbb{S}: \nabla \varphi\,d\xx dt  =  \int_\Omega \m_0^{(\delta)}(\xx). \varphi(0,\xx)\, d\xx\text{, and} \label{2.eqm} \\
    \int_0^T\int_\Omega &\rho \phi\, d\xx dt - \int_0^T \int_\Omega \rho \uu . \nabla \phi\, d\xx dt = \int_\Omega \rho_0^{(\delta)}(\xx) \phi(0,\xx)\, d\xx \label{2.eqrho}
  \end{align}
  for any $\varphi \in C_c^\infty([0,T)\X\Omega;\mathbb{R}^N)$ and $\phi \in C^\infty([0,T)\X\overline\Omega)$. 
\end{remark}

\begin{proof}
First we observe that the convergences in \eqref{2.convergences} follow directly from Corollary~\ref{2.corollary1}. Second, note that (i) follows from Proposition~\ref{A.B.prop2} in view of the equations verified by $\rho^{(\ve)}$. Next, (ii) follows from the fact that the convolution operator $f \in L^\beta(\Omega) \mapsto (\eta \star \widetilde f) \in C^k(\overline\Omega)$ is compact for any integer $k \geq 0$, as one can easily see from the Arzelá--Ascoli theorem. 

The assertion that $\rho^{(\ve)} \uu^{(\ve)}$ converges in $C([0,T];L^{\frac{2\gamma}{\gamma+1}}(\Omega))$ follows from the same lines of (i) and the bounds of $\rho^{(\ve)}$ in $L^\infty(0,T;L^\gamma(\Omega))$ and of $\sqrt{\rho^{(\ve)}} \uu^{(\ve)}$ in $L^\infty(0,T;L^2(\Omega))$  expressed in, respectively, \eqref{2.e6} and \eqref{2.e7}. Then, in order to conclude (iii) we have to verify that the limit is indeed $\rho \uu$ (symbolically, $\overline{\rho \uu} = \overline{\rho}\,\overline{\uu}$). This can be seen as follows. Since $\gamma > N/2$, $L^\gamma(\Omega) \subset H^{-1}(\Omega)$ with compact injection. Then $\rho^{(\ve)} \rightarrow \rho$ strongly in $C([0,T]; H^{-1}(\Omega))$. This implies that $\overline{\rho \uu} = \rho \uu$ indeed.

Finally, (iv) is obtained by a similar argument of (iii), for $L^{\frac{2\gamma}{\gamma+1}}(\Omega) \subset H^{-1}(\Omega)$ compactly since $\gamma > N/2$.

All things considered, one can easily pass to the limit and conclude that \eqref{2.eqm} and \eqref{2.eqrho} are both valid, i.e., that $(\rho, \uu)$ is a equation solution to \eqref{2.system2}-\eqref{2.system2conds}. Moreover,  \eqref{2.energyestimate} implies that
\begin{multline*}
  -\int_0^T\int_\Omega \Big[ \frac{1}{2} \rho^{(\ve)} |\uu^{(\ve)}|^2 + \frac{A}{\gamma-1} (\rho^{(\ve)})^\gamma + \frac{\delta}{\beta-1} (\rho^{(\ve)})^\beta \Big] \psi'(t) \, d\xx dt \\
  + \int_0^T \int_\Omega \Big[ \mu(\rhoeta) | \nabla \uu^{(\ve)}|^2 + \Big(\lambda(\rhoeta) + \mu(\rhoeta)\Big) (\Div \uu^{(\ve)})^2  \Big] \psi(t) \, d\xx\, dt \nonumber \\
  \leq \int_\Omega \Big[ \frac{1}{2} \rho_0^{(\delta)} |\uu_0^{(\delta)}(\xx)|^2 + \frac{A}{\gamma-1} (\rho_0^{(\delta)})^\gamma + \frac{\delta}{\beta-1} (\rho_0^{(\delta)})^\beta \Big] \psi(0) \, d\xx  = \psi(0) E_0^{(\delta)},
\end{multline*}
for any nonnegative $\psi \in C_c^\infty([0,T))$, from which the energy inequality \eqref{2.energy} follows.
 \end{proof}

Next we state a variant of Theorem~\ref{2.thm.EVF} valid for solutions of the regularized system.

\begin{lemma} \label{3.thm.effviscflux}
    For any $\varphi \in C_c^\infty((0,T)\times\Omega)$,
    \begin{multline*}
      \int_0^T \int_\Omega \varphi \rho^{(\ve)} \Big[ \frac{P(\rho^{(\ve)})}{2\mu(\rhoveta) + \lambda(\rhoveta)} - \Div \uu^{(\ve)} \Big] \, d\xx dt  \\
               \rightarrow \int_0^T \int_\Omega \varphi \rho \Big[ \frac{\overline{P(\rho)}}{2\mu(\rhoeta) + \lambda(\rhoeta)} - \Div \uu \Big] \, d\xx dt.
    \end{multline*}
  \end{lemma}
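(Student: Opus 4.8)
The idea is to mimic the proof of Theorem~\ref{2.thm.EVF}, but now with the sequence indexed by $\ve$ rather than $n$, taking $B$ to be the identity function (that is, $B(z)=z$, so $\overline{B(\rho)}=\rho$ by the strong convergence $\rho^{(\ve)}\to\rho$ in $C([0,T];H^{-1}(\Omega))$). The extra terms present in the regularized system --- namely the artificial viscosity $\ve\Delta\rho^{(\ve)}$ in the continuity equation and the term $\ve\nabla\uu^{(\ve)}\cdot\nabla\rho^{(\ve)}$ in the momentum equation --- must be shown to vanish in the limit, and these are exactly the quantities controlled by the last two lines of \eqref{2.convergences} in Proposition~\ref{2.thmconvs}. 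The cleanest route is: (1)~rewrite the momentum equation \eqref{2.eq.momentum-ve} in the form $\nabla P(\rho^{(\ve)})-\Div\Sbb_\ve=-\partial_t(\rho^{(\ve)}\uu^{(\ve)})-\Div(\rho^{(\ve)}\uu^{(\ve)}\otimes\uu^{(\ve)})+\ve\nabla\uu^{(\ve)}\cdot\nabla\rho^{(\ve)}$ and pass to the limit (Proposition~\ref{2.thmconvs} already gives that the limit is the momentum equation of \eqref{2.system2}, i.e.\ \eqref{2.eq.momentumlimit} with $\overline{P}=\overline{P(\rho)}$); (2)~test both the $\ve$-equation and the limit equation against $\phi_\ve=\varphi\,\Delta^{-1}\nabla(\rho^{(\ve)}F(\rhoveta))$ and $\phi=\varphi\,\Delta^{-1}\nabla(\rho F(\rhoeta))$ respectively; (3)~invoke Lemma~\ref{2.l.eval} and Lemma~\ref{2.l.eqBF} verbatim (these are stated for a general renormalized sequence satisfying \eqref{2.convergen}--\eqref{2.rhoun2}, which our sequence does), so that after integrating by parts the effective viscous pressure terms $q_\ve\rho^{(\ve)}F(\rhoveta)$ and $q\rho F(\rhoeta)$ appear; (4)~check that each of the remaining terms --- the analogues of $I_1^\ve,\dots,I_8^\ve$ --- converges to its counterpart, exactly as in Steps \#2--\#4 of the proof of Theorem~\ref{2.thm.EVF}, using the convergences (i)--(iv) of Proposition~\ref{2.thmconvs} together with the commutator estimates Theorem~\ref{A.C.2} and the compactness lemma Theorem~\ref{A.C.3}.

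There is, however, one genuinely new point that distinguishes this lemma from Theorem~\ref{2.thm.EVF}: here the density sequence $\rho^{(\ve)}$ is \emph{not} assumed to satisfy the renormalized continuity equation --- it satisfies the regularized equation $\partial_t\rho^{(\ve)}+\Div(\rho^{(\ve)}\uu^{(\ve)})=\ve\Delta\rho^{(\ve)}$. But since $\rho^{(\ve)}\in C([0,T];L^2(\Omega))\cap L^2(0,T;H^1(\Omega))$ and $\rho^{(\ve)}\uu^{(\ve)}$ is sufficiently integrable, the regularized equation can be renormalized directly (multiply by $B'(\rho^{(\ve)})$; the viscous term produces $\ve B'(\rho^{(\ve)})\Delta\rho^{(\ve)}=\ve\Delta B(\rho^{(\ve)})-\ve B''(\rho^{(\ve)})|\nabla\rho^{(\ve)}|^2$), yielding
\[
B(\rho^{(\ve)})_t+\Div(B(\rho^{(\ve)})\uu^{(\ve)})+b(\rho^{(\ve)})\Div\uu^{(\ve)}=\ve\Delta B(\rho^{(\ve)})-\ve B''(\rho^{(\ve)})|\nabla\rho^{(\ve)}|^2.
\]
For $B(z)=z$ this is just the original equation; more importantly, the analogue of Lemma~\ref{2.l.eqBF} will acquire two extra right-hand side terms of the form $\ve\Delta(\rho^{(\ve)}F(\rhoveta))$ and $\ve(\cdots)|\nabla\rho^{(\ve)}|^2$, and one must check these tend to zero in $L^2(0,T;H^{-1}(\Omega))$, respectively $L^1$, which follows from the last two convergences in \eqref{2.convergences} and the bound $\ve\int_0^T\!\!\int_\Omega|\nabla\rho^{(\ve)}|^2\le\text{const.}$ of \eqref{2.e11} (note $F$ and $F'$ are bounded since $\lambda+2\mu\ge\mu_0>0$ and the viscosities are smooth, and $\rhoveta$ together with all its spatial derivatives are uniformly bounded by \eqref{2.e6}). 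Because of this, in the identity analogous to \eqref{2.idn} there will be one or two additional terms $I_9^\ve,I_{10}^\ve$ collecting the $\ve$-contributions, all of which vanish as $\ve\to0$ by the same estimates.

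\textbf{Main obstacle.} The only delicate estimate is controlling the new $\ve$-terms uniformly: specifically, showing $\ve\,\nabla\uu^{(\ve)}\cdot\nabla\rho^{(\ve)}\to0$ in $L^1$ (given directly in \eqref{2.convergences}, but one must make sure that when it is paired against $\varphi\rho^{(\ve)}\uu^{(\ve)}\Delta^{-1}\nabla(\cdots)$ --- an $L^\infty$ factor after the regularizing operator --- the product still goes to zero, which it does), and showing the $\ve$-terms arising from renormalizing the regularized continuity equation go to zero; the term $\ve B''(\rho^{(\ve)})|\nabla\rho^{(\ve)}|^2$ is the worrisome one, but since we only need $B(z)=z$ here we have $B''\equiv0$ and it simply disappears, while the term $\ve\Delta(\rho^{(\ve)}F(\rhoveta))=\ve F(\rhoveta)\Delta\rho^{(\ve)}+2\ve\nabla F(\rhoveta)\cdot\nabla\rho^{(\ve)}+\ve\rho^{(\ve)}\Delta F(\rhoveta)$ is handled term by term using $\ve\Delta\rho^{(\ve)}\to0$ in $L^2(0,T;H^{-1})$, $\ve\nabla\rho^{(\ve)}\to0$ in $L^2((0,T)\times\Omega)$ (from \eqref{2.e11}), and the uniform smoothness of $F(\rhoveta)$. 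Apart from bookkeeping these $\ve$-corrections, every other step is a word-for-word repetition of the proof of Theorem~\ref{2.thm.EVF} with $n$ replaced by $\ve$ and $B=\mathrm{id}$, so I would phrase the proof as ``arguing exactly as in the proof of Theorem~\ref{2.thm.EVF}, with the following modifications to account for the regularizing terms,'' and then only spell out the new terms and their vanishing.
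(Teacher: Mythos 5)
Your proposal is correct and follows essentially the same route as the paper: test the $\ve$-momentum equation and the limit equation against $\varphi\,\Delta^{-1}\nabla(\rho^{(\ve)}F(\rhoveta))$ and $\varphi\,\Delta^{-1}\nabla(\rho F(\rhoeta))$, rewrite the time-derivative of $\rho^{(\ve)}F(\rhoveta)$ via the analogue of Lemma~\ref{2.l.eqBF} adapted to the regularized continuity equation (the paper's Lemma~\ref{4.l.eqrhoF}), collect the extra $\ve$-contributions (there the single term $R^{(\ve)}$, from $\ve\nabla\uu^{(\ve)}\cdot\nabla\rho^{(\ve)}$, plus the $\ve$-pieces inside $h^{(\ve)}$), and kill them with \eqref{2.convergences} and \eqref{2.e11} while the remaining $I_j^{(\ve)}$ converge exactly as in Theorem~\ref{2.thm.EVF}. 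One small slip: the term $\ve\nabla\uu^{(\ve)}\cdot\nabla\rho^{(\ve)}$ is paired against $\varphi\,\Delta^{-1}\nabla(\rho^{(\ve)}F(\rhoveta))$, not against $\varphi\rho^{(\ve)}\uu^{(\ve)}\Delta^{-1}\nabla(\cdots)$, but the conclusion (an $L^\infty$ multiplier times an $L^1$-null sequence) is unchanged; everything else matches the paper's argument.
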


The proof is essentially the same as that of Theorem~\ref{2.thm.EVF}, modulo a few terms that tend to zero as $\ve\to 0$. Note that, in contrast with Theorem~\ref{2.thm.EVF}, here the conclusion holds with the function $B(z)=z$, which is not a bounded function. However, due to the artificial pressure term, which is fixed throughout this Section, there are higher integrability estimates available on the densities which allow for the admissibility of this unbounded function.

Since the main ideas of the proof have been set in Section~\ref{S.EVF} we will only point out the modifications that are in order to prove this result. 

First, we have the following observation which is the analogue of Lemma~\ref{2.l.eqBF} for solutions of the regularized system.

\begin{lemma}\label{4.l.eqrhoF}
Denote
 \begin{equation*}
    F(\xi) = \frac{1}{\lambda(\xi) + 2\mu(\xi)}. 
  \end{equation*}
Then, the following equation holds in the sense of distributions in $\mathbb{R}^N$
\begin{equation}\label{3.e.rhoveFve}
 \Big(\rho^{(\ve)}F(\rhoveta)\Big)_t + \Div \Big(\rho^{(\ve)} F(\rhoveta )  \uu^{(\ve)} \Big)  = h^{(\ve)},
\end{equation}
where
\begin{multline*}
h^{(\ve)}\stackrel{\text{def}}{=}\ve F(\rhoveta) \,\Div ( 1_\Omega \nabla \rho^{(\ve)})  + \ve \sum_{i=1}^N \rho^{(\ve)}F'(\rhoveta) \Big( \frac{\partial \eta}{\partial y_i} \star \frac{\partial \rho^{(\ve)}}{\partial y_i} \Big) \\ 
       + \rho^{(\ve)} F'(\rhoveta)\Big[\Div \big(\rhoveta  \uu^{(\ve)} \big) - \Div \big(\eta \star (\rho^{(\ve)} \uu^{(\ve)})\big) \Big] \\
       - \rhoveta F'(\rhoveta) \Div \uu^{(\ve)}
\end{multline*}
  
Moreover, $h^{(\ve)}\to h$ weakly in $L^2(0,T;H^{-1}(\mathbb{R}^N)) + L^{\frac{2\beta}{\beta + 2}}((0,T) \times \R^N)$, as $\ve\to 0$, where
\[
    h \stackrel{\text{def}}{=} F'(\rhoeta)\Big[ \rho \nabla\rhoeta  \cdot \uu + \rhoeta\, \overline{\rho \Div \uu} - \Div \big(\eta \star (\rho \uu )\big) \Big]   - \rhoeta\, F'(\rhoeta),
  \]
and $(\rho, \uu)$ satisfy the following equation in the sense of distributions
\begin{equation}\label{3.e.continuityRN}
    \big(\rho F(\rhoeta)\big)_t + \Div \big(\rho F(\rhoeta )  \uu \big)  =\, h,
 \end{equation}
\end{lemma}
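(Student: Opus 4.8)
\textbf{Plan for the proof of Lemma~\ref{4.l.eqrhoF}.}

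The proof will follow the same strategy as Step \#2 of the proof of Lemma~\ref{2.l.eqBF}, adapted to account for the two extra terms in the regularized continuity equation \eqref{2.eq.continuity-ve}. First I would derive an evolution equation for the mollified density $\rho^{(\ve)}$ extended by zero outside $\Omega$, call it $\widetilde{\rho^{(\ve)}}$. Testing \eqref{2.eq.continuity-ve} with $\eta(\x-\y)$ (and using the Neumann boundary condition \eqref{2.eq.boundarydata2-ve} to control the boundary term coming from $\ve\Delta\rho^{(\ve)}$, which produces the term $\ve\,\Div(1_\Omega\nabla\rho^{(\ve)})\star\eta$), one obtains
\[
\partial_t\rhoveta + \Div\big(\eta\star(\rho^{(\ve)}\uu^{(\ve)})\big) = \ve\,\eta\star\Div(1_\Omega\nabla\rho^{(\ve)}).
\]
Next, applying the chain rule — legitimate here because $F$ is smooth and $\rhoveta$ is smooth in $\x$ — gives the equation for $F(\rhoveta)$:
\[
\partial_t F(\rhoveta) + F'(\rhoveta)\,\Div\big(\eta\star(\rho^{(\ve)}\uu^{(\ve)})\big) = \ve\, F'(\rhoveta)\,\eta\star\Div(1_\Omega\nabla\rho^{(\ve)}).
\]
Finally I would combine this with the equation $\partial_t\rho^{(\ve)} + \Div(\rho^{(\ve)}\uu^{(\ve)}) = \ve\Delta\rho^{(\ve)}$ via the product rule for distributions; writing $\Div(\rho^{(\ve)}F(\rhoveta)\uu^{(\ve)}) = F(\rhoveta)\Div(\rho^{(\ve)}\uu^{(\ve)}) + \rho^{(\ve)}\uu^{(\ve)}\cdot\nabla F(\rhoveta)$ and rearranging, the terms reorganize into \eqref{3.e.rhoveFve} with $h^{(\ve)}$ as stated. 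The second term in $h^{(\ve)}$, namely $\ve\sum_i\rho^{(\ve)}F'(\rhoveta)(\partial_{y_i}\eta\star\partial_{y_i}\rho^{(\ve)})$, arises from rewriting $\eta\star\Div(1_\Omega\nabla\rho^{(\ve)}) = \Div(1_\Omega\nabla\rho^{(\ve)})\star\eta$ and transferring one derivative onto $\eta$; alternatively it already appears in the form written. The bookkeeping here is routine but must be done carefully, paying attention to which quantities are smooth and which are merely $L^2$ in $\x$.

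To pass to the limit as $\ve\to 0$ and obtain \eqref{3.e.continuityRN}, I would argue term by term. By Proposition~\ref{2.thmconvs}(ii), $F(\rhoveta)\to F(\rhoeta)$ and $F'(\rhoveta)\to F'(\rhoeta)$ strongly in $C([0,T];C^\infty(\overline\Omega))$, which handles all the smooth factors. By (i) and (iii) of Proposition~\ref{2.thmconvs}, $\rho^{(\ve)}\to\rho$ and $\rho^{(\ve)}\uu^{(\ve)}\to\rho\uu$ in the appropriate weak-$\star$ continuity spaces, and $\uu^{(\ve)}\wto\uu$ weakly in $L^2(0,T;H^1_0)$. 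Using the compact embedding $L^\gamma(\Omega)\subset H^{-1}(\Omega)$ (valid since $\gamma>N/2$) together with the weak $L^2H^1$ convergence of $\uu^{(\ve)}$, products of the form $\rho^{(\ve)}\cdot(\text{smooth})\cdot\Div\uu^{(\ve)}$ and $\eta\star(\rho^{(\ve)}\uu^{(\ve)})$ converge in the sense of distributions; the only nonlinear term that does not close is $\rhoveta F'(\rhoveta)\Div\uu^{(\ve)}$, whose limit must be written using the weak limit $\overline{\rho\Div\uu}$ — except that here the smooth prefactor $\rhoveta F'(\rhoveta)$ converges \emph{strongly} in $C$, so in fact $\rhoveta F'(\rhoveta)\Div\uu^{(\ve)}\wto\rhoeta F'(\rhoeta)\Div\uu$ weakly; the genuine weak limit $\overline{\rho\Div\uu}$ enters only through the term $\rho^{(\ve)}F'(\rhoveta)\Div(\rhoveta\uu^{(\ve)})$ after expanding $\Div(\rhoveta\uu^{(\ve)}) = \nabla\rhoveta\cdot\uu^{(\ve)} + \rhoveta\Div\uu^{(\ve)}$ and using that $\rho^{(\ve)}\Div\uu^{(\ve)}$ only converges weakly. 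This is exactly the structure mirrored in the definition of $h$.

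For the vanishing $\ve$-terms: the first term $\ve F(\rhoveta)\Div(1_\Omega\nabla\rho^{(\ve)})$ tends to zero in $L^2(0,T;H^{-1}(\mathbb R^N))$ because $\ve\Delta\rho^{(\ve)}\to 0$ strongly there (last line of \eqref{2.convergences}) and $F(\rhoveta)$ is bounded in $C^1$; the second term $\ve\sum_i\rho^{(\ve)}F'(\rhoveta)(\partial_{y_i}\eta\star\partial_{y_i}\rho^{(\ve)})$ tends to zero in $L^{2\beta/(\beta+2)}((0,T)\times\mathbb R^N)$ since $\sqrt\ve\,\nabla\rho^{(\ve)}$ is bounded in $L^2$ by \eqref{2.e11}, $\rho^{(\ve)}$ is bounded in $L^\infty(0,T;L^\beta)$ by \eqref{2.e7}, so the product carries a factor $\sqrt\ve\to 0$; this accounts for the stated function space $L^2(0,T;H^{-1}) + L^{2\beta/(\beta+2)}$ for the weak convergence of $h^{(\ve)}$. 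The main obstacle — and the only place requiring real care — is the derivation of the equation for $\rhoveta$ from the \emph{Neumann} problem: one must verify that testing \eqref{2.eq.continuity-ve} against $\eta(\x-\cdot)$ and integrating by parts produces precisely $\ve\,\eta\star\Div(1_\Omega\nabla\rho^{(\ve)})$ with no leftover boundary contribution, which is where \eqref{2.eq.boundarydata2-ve} is used; the rest is a routine (if lengthy) computation that I would summarize rather than write out in full, exactly as the authors do in Step \#2 of Lemma~\ref{2.l.eqBF}.
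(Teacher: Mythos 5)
Your proposal follows exactly the strategy of the paper's (very terse) proof — mollify the zero-extended regularized continuity equation against $\eta(\x-\y)$ using the Neumann boundary condition to obtain the equation for $\rhoveta$, apply the chain rule to get the equation for $F(\rhoveta)$, combine with the equation for $\rho^{(\ve)}$ via the product rule, and pass to the limit using Proposition~\ref{2.thmconvs} together with the bounds \eqref{2.e7} and \eqref{2.e11} to kill the $\ve$-terms — and fills in precisely the "details omitted" there. One small bookkeeping remark: the last term of the paper's displayed $h^{(\ve)}$ should, by comparison with the analogous term $-B(\rho_n)\rhoneta F'(\rhoneta)\Div\uu_n$ of Lemma~\ref{2.l.eqBF} and by your own derivation, carry the factor $\rho^{(\ve)}$, i.e. read $-\rho^{(\ve)}\rhoveta F'(\rhoveta)\Div\uu^{(\ve)}$; consequently its limit also involves $\overline{\rho\Div\uu}$ rather than $\Div\uu$, so that weak limit enters $h$ through two places (and in fact the two contributions cancel) — your discussion of that term took the paper's formula at face value, but this is a typo in the lemma's statement rather than a gap in your argument.
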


\begin{proof}
Note that, extending $\rho^{(\ve)}$ and $\uu^{\ve}$ by zero outside $\Omega$, we have that they satisfy the following equation in the sense of distributions in $\mathbb{R}^N$
\begin{equation}\label{3.e.continuityveRN}
\partial_t\rho^{(\ve)} + \Div(\rho^{(\ve)}\uu^{(\ve)}) = \Div(1_\Omega \nabla \rho^{(\ve)}).
\end{equation}

Then, in order to deduce \eqref{3.e.rhoveFve} first we take $\eta(\x-\y)$ as a test function in \eqref{3.e.continuityveRN} in order to deduce an equation for $\rhoveta$. Then, we use this equation to deduce an equation for $F(\rhoveta)$. Finally, we use the resulting equation and combine it with equation \eqref{3.e.continuityveRN} to conclude. We omit the details.

In order to deduce equation \eqref{3.e.continuityRN} it suffices to take the limit as $\ve\to 0$ in equation \eqref{3.e.rhoveFve}, noting that each term converges weakly to its  counterpart in light of Proposition \ref{2.thmconvs}.
\end{proof}
  
  \begin{proof}[Proof of Lemma~\ref{3.thm.effviscflux}]
  The idea is to take the test functions
  \begin{equation*} 
   \begin{cases}
    \phi^{(\ve)}(t,\xx) = \varphi(t,\xx) \Delta^{-1} \nabla \Big( \frac{\rho^{(\ve)}}{\lambda(\rhoveta) + 2\mu(\rhoveta)} \Big)(t,\xx), \\  
    \phi(t,\xx) = \varphi(t,\xx) \Delta^{-1} \nabla \Big( \frac{\rho}{\lambda(\rhoeta)+ 2\mu(\rhoeta)} \Big)(t,\xx)
   \end{cases}
  \end{equation*}
  in the momentum equations of, respectively, $\rho^{(\ve)}\uu^{(\ve)}$ and $\rho \uu$. In light of Lemma~\ref{4.l.eqrhoF}, we may proceed as in the proof of Theorem~\ref{2.thm.EVF} to conclude that, after some manipulation, we arrive at the following identity
\begin{equation}
\int_0^T \int_\Omega \varphi \rho^{(\ve)} \Big[ \frac{P(\rho^{(\ve)})}{2\mu(\rhoveta) + \lambda(\rhoveta)} - \Div \uu^{(\ve)} \Big] \, d\xx dt = R^{(\ve)}+\sum_{j=1}^{8} I_j^{(\ve)},
\end{equation}
where
\begin{align*}
&R^{(\ve)}= \ve \int_0^T \int_\Omega (\nabla \uu^{(\ve)}\cdot \nabla \rho^{(\ve)}) \cdot  \Delta^{-1} \nabla \Big[ \rho^{(\ve)} F(\rhoveta) \Big]\varphi \, d\xx dt \\
&I_1^{(\ve)}=\int_0^T \int_\Omega \varphi\rho^{(\ve)}\uu^{(\ve)} \Delta^{-1} \nabla\Div \Big( B(\rho^{(\ve)}) F([\rho^{(\ve)}]^\eta)\uu^{(\ve)} \Big)d\x dt,\\
&I_2^{(\ve)}=-\int_0^T \int_\Omega \varphi\rho^{(\ve)}\uu^{(\ve)} \Delta^{-1} \nabla h^{(\ve)} d\x dt,\\
&I_3^{(\ve)} = -\int_0^T \int_\Omega \rho^{(\ve)}\uu^{(\ve)} \varphi_t\Delta^{-1} \nabla \Big( B(\rho^{(\ve)}) F([\rho^{(\ve)}]^\eta)\Big) d\x dt,\\
&I_4^{(\ve)}= - \int_0^T\int_\Omega (\rho^{(\ve)}\uu^{(\ve)}\otimes\uu^{(\ve)}\cdot \nabla\varphi)\cdot\Delta^{-1} \nabla \Big( B(\rho^{(\ve)}) F([\rho^{(\ve)}]^\eta)\Big) d\x dt,\\
&I_5^{(\ve)}=-\int_0^T\int_\Omega \varphi\rho^{(\ve)}\uu^{(\ve)}\otimes\uu^{(\ve)} : \nabla\Delta^{-1} \nabla \Big( B(\rho^{(\ve)}) F([\rho^{(\ve)}]^\eta)\Big) d\x dt,\\
&I_6^{(\ve)}= \int_0^T\int_\Omega [\nabla\Delta^{-1}\nabla,\varphi]:(\mathbb{S}^{(\ve)})B(\rho^{(\ve)}) F([\rho^{(\ve)}]^\eta)d\x dt,\\
&I_7^{(\ve)}= -\int_0^T\int_\Omega \Big((P(\rho^{(\ve)})\ \mathbb{I}-\mathbb{S}^{(\ve)})\cdot\nabla\varphi\Big)\cdot \Delta^{-1}\nabla \Big(B(\rho^{(\ve)}) F([\rho^{(\ve)}]^\eta)\Big)d\x dt,\\
&I_8^{(\ve)}= 2\int_0^T\int_\Omega\varphi [\nabla\Delta^{-1}\nabla,\mu(\rhoveta)]:\mathbb{D}(\uu^{(\ve)})\ B(\rho^{(\ve)})F(\rhoveta)d\x dt,
\end{align*}

Similarly, after taking $\phi^{(\ve)}$ as a test function in equation \eqref{2.system2}, using Lemma~\ref{4.l.eqrhoF} and proceeding as in the proof of Theorem~\ref{2.thm.EVF} we arrive at the identity
\begin{equation*}
\int_0^T \int_\Omega \varphi \rho \Big[ \frac{\overline{P(\rho)}}{2\mu(\rhoeta) + \lambda(\rhoeta)} - \Div \uu \Big] \, d\xx dt = \sum_{j=1}^{8} I_j^{(\ve)},
\end{equation*}
where
\begin{align*}
&I_1=\int_0^T \int_\Omega \varphi\rho\uu\ \Delta^{-1} \nabla\Div \Big( \rho F([\rho]^\eta)\uu \Big)d\x dt,\\
&I_2=-\int_0^T \int_\Omega \varphi\rho\uu \Delta^{-1} \nabla h d\x dt,\\
&I_3 = -\int_0^T \int_\Omega \rho\uu\ \varphi_t\  \Delta^{-1} \nabla \Big( \rho F([\rho]^\eta)\Big) d\x dt,\\
&I_4= - \int_0^T\int_\Omega (\rho\uu\otimes\uu\cdot \nabla\varphi)\cdot\Delta^{-1} \nabla \Big( \rho F([\rho]^\eta)\Big) d\x dt,\\
&I_5=-\int_0^T\int_\Omega \varphi\rho\uu\otimes\uu : \nabla\Delta^{-1} \nabla \Big(\rho F([\rho]^\eta)\Big) d\x dt,\\
&I_6= \int_0^T\int_\Omega [\nabla\Delta^{-1}\nabla,\varphi]:(\mathbb{S})\ \rho F([\rho]^\eta)d\x dt,\\
&I_7= -\int_0^T\int_\Omega \Big((\overline{P(\rho)}\ \mathbb{I}-\mathbb{S})\cdot\nabla\varphi\Big)\cdot \Delta^{-1}\nabla \Big(\rho F([\rho]^\eta)\Big)d\x dt,\\
&I_8= 2\int_0^T\int_\Omega\varphi [\nabla\Delta^{-1}\nabla,\mu(\rhoeta)]:\mathbb{D}(\uu)\ \rho F(\rhoeta)d\x dt,
\end{align*}

Now, other than the fact that $R^{(\ve)}$ tends to zero as $\ve\to 0$, which follows directly from Proposition~\ref{2.thmconvs}, the proof of Theorem~\ref{2.thm.EVF} can be repeated line by line with some minor modifications in order to show that (up to a subsequence)
\[
I_j^{\ve}\to I_j, \quad\text{as }\ve\to 0,\text{ for all }i=1,...,8,
\] 
and the result follows. We omit the details.
\end{proof}

We are almost in condition to prove the strong convergence of the densities. However, first we need the following Lemma, which follows directly from a general known result (see proposition 4.2 from \cite{F}).

\begin{lemma} \label{2.renormalized}
 If we prolong $\rho$ and $\uu$ to be zero outside $\Omega$, then $\rho$ turns out to be a renormalized solution to the continuity equation $\rho_t + \Div(\rho\uu) = 0$. Moreover, the class of $B$ for which
 \begin{align*}
   B(\rho)_t + \Div (B(\rho)\uu) + (B'(\rho)\rho &- B(\rho))\Div \uu = 0 \\&\text{ in the sense of distributions in $(0,T)\X\R^N$}
 \end{align*}
 can be extended to $B \in C[0,\infty) \cap C^1(0,\infty)$ satisfying
 \begin{equation}
   |\xi B'(\xi)| \leq \text{(const.)}(\xi^{\theta} + \xi^{\gamma/2}) \text{ for all $\xi > 0$}, \label{2.classB}
 \end{equation}
 for some fixed exponent $0 < \theta < \gamma/2$. 
 \end{lemma}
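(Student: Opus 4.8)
The plan is to reduce Lemma~\ref{2.renormalized} to the general DiPerna--Lions type regularization result quoted in \cite{F} (proposition~4.2 there), so the main work is to verify that $(\rho,\uu)$ produced by Proposition~\ref{2.thmconvs} actually satisfies the hypotheses of that result, and then to carry out the commutator/mollification argument that upgrades the admissible class of nonlinearities $B$ to those obeying the growth bound \eqref{2.classB}. First I would record what is available: from Proposition~\ref{2.thmconvs} we have $\rho\in L^\infty(0,T;L^\gamma(\Omega))\cap L^{\beta+1}((0,T)\times\Omega)$, $\uu\in L^2(0,T;H_0^1(\Omega;\R^N))$, and the pair solves the continuity equation $\rho_t+\Div(\rho\uu)=0$ in $\mathcal D'((0,T)\times\Omega)$; extending both by zero outside $\Omega$ and using the no-slip condition $\uu|_{\partial\Omega}=0$, the extended pair solves the same equation in $\mathcal D'((0,T)\times\R^N)$. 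Since $\gamma>N/2\ge1$ and a fortiori $\rho\in L^2_{\loc}$ while $\uu\in L^2_tH^1_x\subset L^2_{\loc}$, the classical Friedrichs commutator lemma applies: mollifying in space, $[\rho]^\varepsilon_t+\Div([\rho]^\varepsilon\uu)=r_\varepsilon\to0$ in $L^1_{\loc}$, and then for $B\in C^1$ with $B'$ bounded one multiplies by $B'([\rho]^\varepsilon)$ and passes to the limit to get the renormalized identity. This is exactly \cite[Prop.~4.2]{F}, so for bounded $B'$ there is nothing new.

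The substantive point is the extension of the admissible class to $B$ satisfying \eqref{2.classB}. The plan here is the standard truncation argument: given such a $B$, set $B_k(\xi):=B(\min(\xi,k))$ (or a smooth cutoff version) so that $B_k'$ is bounded and hence the renormalized equation
\[
\partial_t B_k(\rho)+\Div(B_k(\rho)\uu)+\bigl(B_k'(\rho)\rho-B_k(\rho)\bigr)\Div\uu=0
\]
holds in $\mathcal D'((0,T)\times\R^N)$. Then I would pass to the limit $k\to\infty$ term by term. For the first two terms it suffices that $B_k(\rho)\to B(\rho)$ in $L^1_{\loc}$ and $B_k(\rho)\uu\to B(\rho)\uu$ in $L^1_{\loc}$; this follows from dominated convergence once we know $B(\rho)\in L^1_{\loc}$ and $B(\rho)\uu\in L^1_{\loc}$, which is where the growth restriction enters: \eqref{2.classB} forces $|B(\xi)|\lesssim 1+\xi^{1+\theta}+\xi^{1+\gamma/2}$, and since $\rho\in L^{\beta+1}$ with $\beta>\tfrac32 N>\gamma$ and $1+\gamma/2<\beta+1$, $1+\theta<1+\gamma/2<\beta+1$, we indeed have $B(\rho)\in L^{p}((0,T)\times\Omega)$ for some $p>1$; combined with $\uu\in L^2_tL^{2^*}_x$ (or any $L^c$, $c>1$, when $N=2$), Hölder gives $B(\rho)\uu\in L^1_{\loc}$, and moreover $B(\rho)\uu\in L^q_{\loc}$ for some $q>1$ so the product makes sense as a distribution whose divergence is well-defined. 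For the last term one writes $b_k(\rho):=B_k'(\rho)\rho-B_k(\rho)$ and checks $b_k(\rho)\to B'(\rho)\rho-B(\rho)$ pointwise with a dominating bound $|b_k(\rho)|\lesssim 1+\rho^{1+\theta}+\rho^{1+\gamma/2}$ of the same type; since $\Div\uu\in L^2((0,T)\times\Omega)$ and $1+\gamma/2<\beta+1$ guarantees $\rho^{1+\gamma/2}\in L^r$ with $\tfrac1r+\tfrac12<1$, the product $b_k(\rho)\Div\uu$ converges in $L^1_{\loc}$ by the generalized dominated convergence theorem (Vitali). Collecting these three limits yields the renormalized identity for $B$ itself.

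I expect the main obstacle — really the only place requiring care — to be the bookkeeping of exponents in the passage $k\to\infty$, namely checking that the growth bound \eqref{2.classB} is matched against the available integrability $\rho\in L^{\beta+1}$, $\uu\in L^2_tH^1_x$, $\Div\uu\in L^2_{t,x}$ so that every term in the renormalized equation converges in $L^1_{\loc}$ (and the flux term stays in a reflexive $L^q_{\loc}$ with $q>1$ so that taking its distributional divergence commutes with the limit). Once the exponent inequalities $1+\theta<1+\gamma/2<\beta+1$ and, for the product with $\Div\uu$, $\tfrac{1}{(\beta+1)/(1+\gamma/2)}+\tfrac12<1$ (equivalently $\beta+1>2+\gamma$, which is implied by $\beta>\tfrac32 N\ge N+1>\gamma+1$ — a point worth double-checking in the write-up) are in hand, the argument is routine and I would simply cite \cite[Prop.~4.2]{F} for the $B'$-bounded case and present the truncation/limit step explicitly. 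No new ideas beyond those already used repeatedly in this section and in \cite{F} are needed, which is why the statement is phrased as ``follows directly from a general known result.''
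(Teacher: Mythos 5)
Your overall approach is the same as the paper's: reduce to \cite[Prop.~4.2]{F}, i.e.\ the DiPerna--Lions mollification/Friedrichs commutator argument, and then extend to the growth class \eqref{2.classB} by truncation in $B$. That is what the paper does, and the structure of your argument is sound.

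However, there is an arithmetic slip in the exponent bookkeeping that then causes you — correctly — to worry about a condition that is not actually available. From $|\xi B'(\xi)|\leq C(\xi^{\theta}+\xi^{\gamma/2})$ you get $|B'(\xi)|\leq C(\xi^{\theta-1}+\xi^{\gamma/2-1})$, and integrating this (using $\theta>0$, $\gamma/2>0$) gives
\[
|B(\xi)|\lesssim 1+\xi^{\theta}+\xi^{\gamma/2},
\qquad
|b(\xi)|=|\xi B'(\xi)-B(\xi)|\lesssim 1+\xi^{\theta}+\xi^{\gamma/2},
\]
\emph{not} $1+\xi^{1+\theta}+\xi^{1+\gamma/2}$. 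You added an extra factor of $\xi$. With the correct exponents, $b(\rho)\in L^2((0,T)\times\Omega)$ already from $\rho\in L^\infty(0,T;L^\gamma(\Omega))$ (since $\rho^{\gamma/2}\in L^2_{t,x}$ on a bounded domain and $\theta<\gamma/2$), and hence $b(\rho)\,\Div\uu\in L^1$ by Cauchy--Schwarz against $\Div\uu\in L^2$. No condition whatsoever on $\beta$ enters the truncation step; the growth bound \eqref{2.classB} is calibrated precisely so that this works with just the energy-level regularity $\rho\in L^\infty_t L^\gamma_x$, $\nabla\uu\in L^2_{t,x}$.

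This matters because the inequality you flagged as ``worth double-checking'' is in fact false in general: neither $\tfrac32 N>\gamma$ nor $N+1>\gamma+1$ follows from the hypotheses, since $\gamma$ is only bounded below by $N/2$ and can be arbitrarily large; the condition \eqref{2.ebeta} merely requires $\beta>\gamma$, not $\beta>\gamma+1$. So if the exponent $1+\gamma/2$ were really needed, your argument would break. Fortunately it is not. The only place the extra integrability $\rho\in L^{\beta+1}$ (hence $\rho\in L^2$) is used — and this is what the paper's one-line proof is emphasizing — is in the Friedrichs commutator step itself: for the commutator $r_\kappa=\Div(\rho\uu)\star\theta_\kappa-\Div((\rho\star\theta_\kappa)\uu)$ to tend to $0$ in $L^1_{\loc}$ with $\uu\in L^2_tH^1_x$ one needs $\rho\in L^2_{\loc}$, which the artificial pressure furnishes at this stage. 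Separating these two roles ($\rho\in L^2$ for the commutator, $\rho\in L^\gamma$ for the $b$-term) cleans up the write-up and removes the false exponent chain.
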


This is basically due to the fact that $\rho \in L^2((0,T)\X \Omega)$ and $\uu \in L^2(0,T;$ $ H_0^1(\Omega))$. The proof consists in mollifying equation the continuity equation, multiplying the resulting equation by $B'(\rho)$ and then taking the limit as the regularizing parameter vanishes, wherein the convergence is ensured by the $L^2$ integrability of the density, which is available at this stage because of the estimates available due to the artificial pressure. 

\begin{proposition}
    $\rho^{(\ve)} \rightarrow \rho$ strongly in $L^1((0,T)\X\Omega)$. 
  \end{proposition}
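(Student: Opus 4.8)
The plan is to follow the classical Lions--Feireisl argument for strong convergence of the densities, now powered by the variant of the effective viscous flux identity established in Lemma~\ref{3.thm.effviscflux}. The first step is to introduce the renormalized quantities. For a parameter $k>0$ let $T_k(z)$ be a smooth, concave, bounded truncation with $T_k(z)=z$ for $z\le k$ and $T_k(z)=k+1$ for $z$ large, so that $T_k$ and the associated $b_k(z):=T_k'(z)z-T_k(z)$ fall into the admissible class \eqref{2.classB} for which Lemma~\ref{2.renormalized} applies to $(\rho,\uu)$; simultaneously, $T_k(\rho^{(\ve)})$ satisfies the renormalized continuity equation for the regularized system. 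Passing to subsequences, set $\overline{T_k(\rho)}:=\text{w-}\lim_\ve T_k(\rho^{(\ve)})$ and $\overline{T_k(\rho)\Div\uu}:=\text{w-}\lim_\ve T_k(\rho^{(\ve)})\Div\uu^{(\ve)}$. Writing the renormalized equation for $T_k(\rho^{(\ve)})$ and letting $\ve\to0$, and comparing with the renormalized equation satisfied by $T_k(\rho)$ itself, one obtains in the sense of distributions
\begin{equation*}
\partial_t\big(\overline{T_k(\rho)}-T_k(\rho)\big)+\Div\big((\overline{T_k(\rho)}-T_k(\rho))\uu\big)=\overline{b_k(\rho)\Div\uu}-b_k(\rho)\Div\uu
\end{equation*}
(using that $T_k(\rho)\uu$ passes to the weak limit correctly since $\rho^{(\ve)}\to\rho$ in $C([0,T];H^{-1})$). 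The idea is then to test this against a renormalizing function of $\overline{T_k(\rho)}-T_k(\rho)$ to control the defect.

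The second step is the amplitude estimate on the oscillation defect. Using the variant of the effective viscous flux identity from Lemma~\ref{3.thm.effviscflux} with $F(\xi)=(\lambda(\xi)+2\mu(\xi))^{-1}$ (here the relevant $B$ is essentially $T_k$, which is bounded, so this is covered), one extracts
\begin{equation*}
\overline{P(\rho)\,T_k(\rho)\,F(\rhoeta)}-\overline{T_k(\rho)}\,\overline{P(\rho)}\,F(\rhoeta)=\big(2\mu(\rhoeta)+\lambda(\rhoeta)\big)F(\rhoeta)\Big(\overline{T_k(\rho)\Div\uu}-\overline{T_k(\rho)}\,\Div\uu\Big),
\end{equation*}
i.e.\ since $(2\mu+\lambda)F=1$,
\begin{equation*}
\overline{T_k(\rho)\Div\uu}-\overline{T_k(\rho)}\,\Div\uu=\overline{P(\rho)\,T_k(\rho)\,F(\rhoeta)}-\overline{T_k(\rho)}\,\overline{P(\rho)}\,F(\rhoeta).
\end{equation*}
Because $F(\rhoeta)>0$ is bounded above and below (by \eqref{1.eq.viscosities2}) and $z\mapsto P(z)$ and $z\mapsto T_k(z)$ are both nondecreasing, the right-hand side is nonnegative: monotonicity of the product of two increasing functions gives $\overline{P(\rho)T_k(\rho)}\ge\overline{P(\rho)}\,\overline{T_k(\rho)}$ pointwise a.e., after multiplying by the (deterministic, strongly convergent) weight $F(\rhoeta)$. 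Hence $\overline{T_k(\rho)\Div\uu}\ge\overline{T_k(\rho)}\,\Div\uu$, and moreover this difference is controlled: using $|T_k(\rho^{(\ve)})-T_k(\rho)|\le$ (something), one gets $\limsup_\ve\|T_k(\rho^{(\ve)})-T_k(\rho)\|_{L^{\gamma+1}((0,T)\times\Omega)}^{\gamma+1}\le C$ uniformly in $k$, which is the uniform bound on the oscillation defect measure $\operatorname{osc}_{\gamma+1}[\rho^{(\ve)}\to\rho]$.

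The third step closes the argument. Apply Lemma~\ref{2.renormalized} (renormalization for $(\rho,\uu)$) and the limiting renormalized equation for $T_k(\rho^{(\ve)})$ with renormalizing function $L(z)=z\log z$ (or a suitable truncation thereof), integrate over $\Omega$, and subtract. Using that $L$ is convex and $\uu\in L^2(0,T;H^1_0)$, the boundary terms vanish and one is left, for a.e.\ $t$, with
\begin{equation*}
\int_\Omega\big(\overline{T_k(\rho)\log T_k(\rho)}-T_k(\rho)\log T_k(\rho)\big)(t)\,d\xx\le\int_0^t\int_\Omega\big(T_k(\rho)\Div\uu-\overline{T_k(\rho)\Div\uu}\big)\,d\xx\,dt'+(\text{defect error}),
\end{equation*}
where the defect error comes from $\overline{b_k(\rho)\Div\uu}-b_k(\rho)\Div\uu$ and tends to $0$ as $k\to\infty$ by the uniform oscillation-defect bound together with $b_k(z)\to0$ and the equi-integrability of $\Div\uu$ in $L^2$; and the main integral on the right is $\le0$ by Step~2. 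Letting $k\to\infty$, the left side converges to $\int_\Omega(\overline{\rho\log\rho}-\rho\log\rho)(t)\,d\xx\ge0$, so this quantity is both $\le0$ and $\ge0$, hence vanishes. By strict convexity of $z\mapsto z\log z$, $\overline{\rho\log\rho}=\rho\log\rho$ forces $\rho^{(\ve)}\to\rho$ strongly in $L^1((0,T)\times\Omega)$ (indeed in $L^p$ for any $p<\beta+1$ by interpolation with the uniform $L^{\beta+1}$ bound). The main obstacle I anticipate is Step~2: one must verify carefully that the variant identity of Lemma~\ref{3.thm.effviscflux} can be applied with $B=T_k$ \emph{and} that the extra weight $F(\rhoeta)$ genuinely preserves the sign, i.e.\ that multiplying the monotonicity inequality $\overline{P(\rho)T_k(\rho)}\ge\overline{P(\rho)}\,\overline{T_k(\rho)}$ by the strongly convergent positive function $F(\rhoeta)$ is legitimate at the level of weak limits; this is where the specific structure \eqref{1.eq.viscosities1}--\eqref{1.eq.viscosities2} (in particular $F$ depending only on the mollified density, hence strongly convergent) is essential, and where the argument departs from the constant-viscosity case.
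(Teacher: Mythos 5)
Your proposal imports the machinery of the $\delta\to 0$ stage (concave truncations $T_k$, oscillation defect measures, two-stage renormalization) into the $\ve\to 0$ limit, but this creates a sign problem that is not addressed. Multiplying the $\ve$-regularized continuity equation $\partial_t\rho^{(\ve)}+\Div(\rho^{(\ve)}\uu^{(\ve)})=\ve\Delta\rho^{(\ve)}$ by $T_k'(\rho^{(\ve)})$ produces the extra term $-\ve T_k''(\rho^{(\ve)})|\nabla\rho^{(\ve)}|^2$, which is \emph{nonnegative} because $T_k$ is concave. By \eqref{2.e11} this quantity is bounded in $L^1$ uniformly in $\ve$ but need not vanish; in the limit $\ve\to 0$ it yields a nonnegative defect measure $\mu_k$ that your displayed equation for $\partial_t\bigl(\overline{T_k(\rho)}-T_k(\rho)\bigr)$ silently drops (there is also a sign slip on the $b_k$ terms there). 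When you subsequently renormalize that equation with a convex $L$, the contribution $L'(\overline{T_k(\rho)})\,\mu_k$ is neither signed (recall $L'(z)=\log z+1$ changes sign) nor controllable, so the final inequality is contaminated. In addition, applying Lemma~\ref{3.thm.effviscflux} with $B=T_k$ requires a bounded-$B$ variant for the $\ve$-regularized solutions, whereas that lemma is stated and proved only for $B(z)=z$ (cf.\ the remark preceding it); such a variant is provable, but it is an extra lemma you would have to supply.

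The paper's proof is deliberately simpler precisely because, at the $\ve\to 0$ stage, the fixed artificial pressure already gives a uniform $L^{\beta+1}$ bound on $\rho^{(\ve)}$, so truncations and oscillation defect measures are unnecessary. One takes $B(z)=z\log z$ directly: it is admissible for the limit $(\rho,\uu)$ by Lemma~\ref{2.renormalized}, and for $\rho^{(\ve)}$ one uses smooth \emph{convex} approximations $B_k$ of $z\log z$, so that $-\ve B_k''(\rho^{(\ve)})|\nabla\rho^{(\ve)}|^2\le 0$; the artificial-viscosity contribution is dropped as a favorable inequality, yielding \eqref{4.eq.rhologrholim}. Subtracting this from the exact identity \eqref{4.eq.rhologrho} and invoking Lemma~\ref{3.thm.effviscflux} with $B(z)=z$ exactly as stated, together with the convexity inequality \eqref{2.convexity} and Proposition~\ref{teo2.11}, closes the argument. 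If you wished to preserve a truncation-based scheme here, you would have to replace the concave $T_k$ by convex entropies (like the $B_M$ of Section~\ref{S.dto0}) throughout, so that the $\ve$-defect always carries the good sign; but the direct convex-renormalization argument is the natural one at this stage.
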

  
  \begin{proof}
  Since $\rho^{(\ve)}$ is uniformly bounded in $L^{\beta+1}((0,T)\X\Omega)$, it suffices to show that, passing to a subsequence if necessary, $\rho^{(\ve)} \rightarrow \rho$ almost everywhere. To prove this, we will apply a convexity argument; more specifically, we will show that $\overline{(\rho \log \rho)}(t,\xx) = (\rho \log \rho)(t,\xx)$ for almost every $0<t<T$ and $\xx \in \Omega$.
  
  Let $B(z) = z \log z$. On the one hand, in light of Lemma~\ref{2.renormalized}, the function $B(\rho)$ is admissible in the definition of renormalized solutions and therefore we have that the following equation is satisfied in the sense of distributions in $\mathbb{R}^N$
  \begin{equation*}
  \partial_t(\rho\log\rho) +\Div(\rho\log(\rho)\uu)+\rho\Div\uu =0.
  \end{equation*}
Thus, we see that
\begin{equation}\label{4.eq.rhologrho}
\int_0^t\int_\Omega \rho\Div\uu d\x ds = \int_\Omega\rho_0^{(\d)}\log(\rho_0^{\d})d\x - \int_\Omega \rho(t,\x)\log(\rho(t,x))d\x,
\end{equation} 
for any $t\in [0,T]$.

Next, we approximate $B$ by a sequence of smooth convex functions $B_k$, $k=1,2,...$ with $B'$ and $B''$ uniformly bounded and multiply equation \eqref{2.eq.continuity-ve} by $B_k'(\rho^{(\ve)})$ to obtain that, in the sense of distributions in $\mathbb{R}^N$,
\begin{multline*}
\partial_t(B_k(\rho^{(\ve)})) +\Div(B_k(\rho^{(\ve)})\uu^{(\ve)})+(B_k'(\rho^{(\ve)})\rho^{(\ve)}-B_k(\rho^{(\ve)}))\Div\uu^{(\ve)} \\
= \ve\Delta(1_{\Omega}B_k(\rho^{(\ve)}))-\ve 1_\Omega B_k''(\rho^{(\ve)})|\nabla\rho^{(\ve)}|^2.
\end{multline*}

Thus, multiplying this equation by some test function $\psi$, integrating and sending $k\to\infty$, we obtain
\begin{equation*}
\int_0^t\int_\Omega \psi \rho^{(\ve)}\Div\uu^{(\ve)} d\x ds \leq  \int_\Omega \psi \rho_0^{(\d)}\log(\rho_0^{\d})d\x - \int_\Omega \psi \rho^{(\ve)}(t,\x)\log(\rho^{(\ve)}(t,\x))d\x.
\end{equation*} 
for any $t\in [0,T]$.

Passing to a subsequence if necessary, taking the limit as $\ve\to 0$ we get
\begin{equation*}
\int_0^t\int_\Omega \psi \overline{\rho\Div\uu} d\x ds \leq  \int_\Omega \psi \rho_0^{(\d)}\log(\rho_0^{\d})d\x - \int_\Omega \psi \overline{\rho\log(\rho)}(t)d\x,
\end{equation*} 
which implies
\begin{equation}\label{4.eq.rhologrholim}
\int_0^t \int_\Omega \overline{\rho\Div\uu} d\x ds \leq  \int_\Omega  \rho_0^{(\d)}\log(\rho_0^{\d})d\x - \int_\Omega  \overline{\rho\log(\rho)}(t)d\x.
\end{equation} 
  
 In this way, from equations \eqref{4.eq.rhologrho} and \eqref{4.eq.rhologrholim} we infer that
\begin{equation*}
\int_\Omega (\overline{\rho\log(\rho)} -\rho\log\rho)(t) d\x \leq \int_0^t\int_\Omega \rho\Div\uu - \overline{\rho\Div\uu}d\x ds,
\end{equation*}
  for almost all $t\in[0,T]$.
  
  Therefore, by Lemma~\ref{3.thm.effviscflux} we have that
  $$\int_0^T \int_\Omega \big[ \rho \log \rho - \overline{\rho \log \rho } \big]\,d\xx dt \leq \int_0^T \int_\Omega \Big[ \frac{\overline{\rho P(\rho)} - \rho\, \overline{P(\rho)}}{2\mu(\rhoeta) + \lambda(\rhoeta)} \Big] \,d\xx dt. $$
  As, by convexity, 
  \begin{equation}
\rho\, \overline{P(\rho)} \leq \overline{\rho P(\rho)}, \label{2.convexity}
  \end{equation}
	 we deduce that $\rho \log \rho \leq \overline{\rho \log \rho}$. This last bit of information is enough to conclude the strong convergence of the densities due to Proposition~\ref{teo2.11}.
  \end{proof}

\begin{remark}%\label{4.rmrkconv}
To see \eqref{2.convexity}, consider a measurable set $X \subset (0,T)\X\Omega$ and define the functional $F: L^{\beta+1}((0,T)\X\Omega) \rightarrow \R$ by $$F(f) = \int_X f((A |f|^{\gamma -1}f + \delta |f|^{\beta-1} f) - \overline{P(\rho)}) \, d\xx dt.$$ It is clear that $F$ is convex and continuous in the strong topology of $L^{\beta+1}(\Omega)$; it is consequently lower semicontinuous in the weak topology $\sigma(L^{\beta+1}, L^{(\beta+1)'})$. As $\rho^{(\ve)} \rightharpoonup \rho$ weakly, we conclude that $$F(\rho) \leq \liminf F(\rho^{(\ve)}),$$ which, passing to a subsequence if necessary, yields that $$0 \leq \int_X \overline{\rho P(\rho)} \, d\xx - \int_X \rho \overline{P(\rho)} \, d\xx dt.$$ This evidently yields that $\rho \overline{P(\rho)} \leq \overline{\rho P(\rho)}$ almost everywhere, as we previously claimed.
  \end{remark}

All things considered, we conclude that the limit functions $(\rho\uu)$ are a weak energy solution of the Navier-Stokes equations. Thus, we have proven the following.

\begin{theorem}\label{4.thm1}
Let $\beta>\max\{ 4,\frac{3}{2}N,\gamma \}$ and let $\rho_0^{(\d)}$ and $\mathbf{m}_0^{(\d)}$ be as in Proposition~\ref{2.thm}. Assume that the viscosity coefficients satisfy \eqref{1.eq.viscosities1}-\eqref{1.eq.viscosities2}. Then, there exists a weak energy solution $(\rho,\rho\uu)$ of equations \eqref{1.eq.continuity}-\eqref{1.eq.momentum} with 
\[
P(\rho)= A\rho^{\gamma}+\d \rho^{\beta},
\]
satisfying the initial and boundary conditions \eqref{1.eq.boundarydata}-\eqref{1.eq.initialconds}.

Moreover, $\rho$ and $\uu$ satisfy the continuity equation in the sense of renormalized solutions and satisfy the estimate
\begin{multline}\label{4.energyestimate}
  \int_\Omega \Big[ \frac{1}{2} \rho(t) |\uu(t)|^2 + \frac{A}{\gamma-1} \rho(t)^\gamma + \frac{\delta}{\beta-1} \rho(t)^\beta \Big] \, d\xx \\
  + \int_0^t\int_\Omega \Big[ \mu(\rhoeta) | \nabla \uu|^2 + \Big(\lambda(\rhoeta) + \mu(\rhoeta)\Big) (\Div \uu)^2  \Big] \, d\xx\, dt' \\
  \leq \int_\Omega \Big[ \frac{1}{2} (\rho_0^{(\delta)})^{-1} |\m_0^{(\delta)}|^2 + \frac{A}{\gamma-1} (\rho_0^{(\delta)})^\gamma + \frac{\delta}{\beta-1} (\rho_0^{(\delta)})^\beta \Big] \, d\xx \stackrel{\text{def}}{=} E_0^{(\delta)}. 
  \end{multline}
\end{theorem}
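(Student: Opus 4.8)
The plan is to obtain $(\rho,\rho\uu)$ as a limit, along a subsequence $\ve_n\to 0$ with $\delta$ held fixed, of the regularized solutions $(\rho^{(\ve)},\rho^{(\ve)}\uu^{(\ve)})$ furnished by Proposition~\ref{2.thm} for the data $\rho_0^{(\delta)},\m_0^{(\delta)}$. The uniform-in-$\ve$ bounds of Corollary~\ref{2.corollary1} together with the improved integrability $\rho^{(\ve)}\in L^{\beta+1}((0,T)\X\Omega)$ of Lemma~\ref{2.lemmarho} place us exactly in the hypotheses of Proposition~\ref{2.thmconvs}; applying it produces $\rho\in L^\infty(0,T;L^\gamma(\Omega))\cap L^{\beta+1}((0,T)\X\Omega)$ with $\rho\ge 0$, $\uu\in L^2(0,T;H_0^1(\Omega;\R^N))$, and a weak limit $\overline{P(\rho)}$ of $P(\rho^{(\ve_n)})$, for which the convergences (i)--(iv) hold, $(\rho,\uu)$ is a weak solution of \eqref{2.system2}--\eqref{2.system2conds}, and the energy inequality \eqref{2.energy} is satisfied. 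Hence the whole theorem reduces to the identification $\overline{P(\rho)}=P(\rho)$ a.e.: once this is known, \eqref{2.system2}--\eqref{2.system2conds} become precisely \eqref{1.eq.continuity}--\eqref{1.eq.momentum} with $P=A\rho^\gamma+\delta\rho^\beta$ under the boundary/initial conditions \eqref{1.eq.boundarydata}--\eqref{1.eq.initialconds} (with $\rho_0^{(\delta)},\m_0^{(\delta)}$), the inequality \eqref{2.energy} becomes \eqref{4.energyestimate}, and the renormalized property is exactly the statement of Lemma~\ref{2.renormalized}.

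To obtain $\overline{P(\rho)}=P(\rho)$ I would establish the strong convergence $\rho^{(\ve_n)}\to\rho$ via the convexity argument of the Proposition proved just above. By Lemma~\ref{2.renormalized}, $\rho$ is a renormalized solution of $\rho_t+\Div(\rho\uu)=0$, so the choice $B(z)=z\log z$ gives an exact balance for $\int_0^t\!\int_\Omega\rho\Div\uu$; on the other hand, multiplying \eqref{2.eq.continuity-ve} by $B_k'(\rho^{(\ve)})$ for smooth convex approximations $B_k$ of $B$, letting $k\to\infty$ and then $\ve\to 0$, and using that the term $-\ve\,1_\Omega B_k''(\rho^{(\ve)})|\nabla\rho^{(\ve)}|^2$ has the favourable sign, yields the reverse inequality for $\overline{\rho\log\rho}$. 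Subtracting, and invoking Lemma~\ref{3.thm.effviscflux} (the regularized-solution analogue of Theorem~\ref{2.thm.EVF}, in which the density-dependent viscosities cause no trouble because $\mu(\rhoveta),\lambda(\rhoveta)$ converge strongly by Proposition~\ref{2.thmconvs}(ii)), one is reduced to
\[
\int_0^T\int_\Omega\big(\rho\log\rho-\overline{\rho\log\rho}\big)\,d\x\,dt\ \le\ \int_0^T\int_\Omega\frac{\overline{\rho P(\rho)}-\rho\,\overline{P(\rho)}}{2\mu(\rhoeta)+\lambda(\rhoeta)}\,d\x\,dt\ \le\ 0,
\]
the last inequality coming from the weak lower-semicontinuity estimate $\rho\,\overline{P(\rho)}\le\overline{\rho P(\rho)}$ and the positivity $2\mu+\lambda>0$ of \eqref{1.eq.viscosities2}. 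Strict convexity of $z\mapsto z\log z$ then forces $\overline{\rho\log\rho}=\rho\log\rho$, hence $\rho^{(\ve_n)}\to\rho$ a.e. along a further subsequence, and the $L^{\beta+1}$ bound upgrades this to strong convergence in $L^1((0,T)\X\Omega)$. The hard part is precisely this step: justifying the use of Lemma~\ref{3.thm.effviscflux} with the unbounded weight $B(z)=z$ (which is licensed by the $\delta\rho^\beta$ integrability) and controlling, in the $B_k$-renormalized regularized equation, the two artificial terms $\ve\Delta\rho^{(\ve)}$ and $\ve\nabla\uu^{(\ve)}\!\cdot\nabla\rho^{(\ve)}$ as $\ve\to 0$.

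Finally, with $\rho^{(\ve_n)}\to\rho$ a.e. and $\{P(\rho^{(\ve_n)})\}$ bounded in $L^{(\beta+1)/\beta}((0,T)\X\Omega)$, Vitali's theorem gives $P(\rho^{(\ve_n)})\to P(\rho)$ in $L^1$, whence $\overline{P(\rho)}=P(\rho)$. Substituting into \eqref{2.system2}, the pair $(\rho,\rho\uu)$ is a weak solution of \eqref{1.eq.continuity}--\eqref{1.eq.momentum} with $P=A\rho^\gamma+\delta\rho^\beta$; the boundary and initial data are inherited from \eqref{2.system2conds0}--\eqref{2.system2conds}; the energy inequality \eqref{4.energyestimate} is \eqref{2.energy} after the identification; and $\rho$ is a renormalized solution of the continuity equation by Lemma~\ref{2.renormalized}. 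This establishes every assertion of the theorem.
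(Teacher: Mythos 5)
Your proposal reproduces the paper's own argument for Theorem~\ref{4.thm1} essentially step by step: the limit $\ve_n\to 0$ with $\delta$ fixed via Proposition~\ref{2.thmconvs}, reduction to $\overline{P(\rho)}=P(\rho)$, the $B(z)=z\log z$ renormalization for the limit, the smooth convex $B_k$ renormalization of the $\ve$-regularized continuity equation with the sign of $-\ve\,1_\Omega B_k''(\rho^{(\ve)})|\nabla\rho^{(\ve)}|^2$, Lemma~\ref{3.thm.effviscflux}, the convexity estimate $\rho\,\overline{P(\rho)}\le\overline{\rho P(\rho)}$, and Proposition~\ref{teo2.11}. The only point worth flagging is the sign in your displayed inequality chain. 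You write
\[
\int_0^T\!\!\int_\Omega\big(\rho\log\rho-\overline{\rho\log\rho}\big)\,d\xx\,dt
\;\le\;
\int_0^T\!\!\int_\Omega\frac{\overline{\rho P(\rho)}-\rho\,\overline{P(\rho)}}{2\mu(\rhoeta)+\lambda(\rhoeta)}\,d\xx\,dt
\;\le\;0,
\]
and attribute the last inequality to $\rho\,\overline{P(\rho)}\le\overline{\rho P(\rho)}$. But that estimate makes the numerator $\overline{\rho P(\rho)}-\rho\,\overline{P(\rho)}$ \emph{nonnegative}, and since $2\mu+\lambda>0$ the middle integral is $\ge 0$, not $\le 0$; as written, the chain is internally inconsistent. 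What the renormalization actually gives (from subtracting \eqref{4.eq.rhologrho} from \eqref{4.eq.rhologrholim}) together with Lemma~\ref{3.thm.effviscflux} is
\[
\int_\Omega\big(\overline{\rho\log\rho}-\rho\log\rho\big)(t)\,d\xx
\;\le\;\int_0^t\!\!\int_\Omega\big(\rho\Div\uu-\overline{\rho\Div\uu}\big)\,d\xx\,ds
\;=\;\int_0^t\!\!\int_\Omega\frac{\rho\,\overline{P(\rho)}-\overline{\rho P(\rho)}}{2\mu(\rhoeta)+\lambda(\rhoeta)}\,d\xx\,ds
\;\le\;0,
\]
so $\overline{\rho\log\rho}\le\rho\log\rho$; combined with $\rho\log\rho\le\overline{\rho\log\rho}$ from Proposition~\ref{teo2.11}, equality and then a.e.\ (hence $L^1$) convergence follow. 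The paper's corresponding display has the same orientation confusion, so this is not an error you introduced, but it should be corrected; with the signs fixed your argument is exactly the paper's.
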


\section{Vanishing artificial pressure}\label{S.dto0}

Our goal now is to take the limit as $\d\to 0$ and show that, up to a subsequence, $(\rho^{(\d)},\uu^{(\d)})$ converge to a solution for the original problem, where $(\rho^{(\d)},\uu^{(\d)})$ are the solutions of the Navier-Stokes equations provided by Theorem~\ref{4.thm1}. 

Let us consider general datum $\rho_0$ and $\m_0$ as in \eqref{1.eq.initialdata}. Despite they may not fall in the class of initial conditions considered in Theorem~\ref{4.thm1}, we may always approximate them by $\rho_0^{(\delta)} \in C^\infty(\overline \Omega)$, with $\frac{\partial \rho^{(\delta)}}{\partial \nu(\xx)}(\xx)$ along $\xx \in \partial\Omega$, and $\m_0^{(\delta)} \in C^\infty(\overline{\Omega};\R^N)$, for each $\delta > 0$, such that
  \begin{enumerate}
   \item [(i)]   $\rho_0^{(\delta)} \rightarrow \rho_0$ almost everywhere and in $L^\gamma(\Omega)$,
   \item [(ii)]  $0 < \delta \leq \rho_0^{(\delta)} \leq \delta^{-1/(2\beta)}$ everywhere, 
   \item [(iii)] $\m_0^{(\delta)}$ being a suitable regularization by convolution of
   $$\xx \mapsto \sqrt{\rho_0^{(\delta)}(\xx)}\frac{\m_0(\xx)}{\sqrt{\rho_0(\xx)}}$$
   for which $\m_0^{(\delta)}(\rho_0^{(\delta)})^{-1/2} \rightarrow \m_0(\rho_0)^{-1/2}$ in $L^2(\Omega;\R^N)$.
 \end{enumerate}
 This choice is also convenient since it implies that
 $$E_0^{(\delta)} \rightarrow E_0$$
 where $E_0$ and $E_0^{(\delta)}$ are defined in, respectively, \eqref{1.energy} and \eqref{4.energyestimate}.
  
    Let us begin with the following analogue of Lemma \ref{2.lemmarho}.
  
  \begin{lemma} \label{3.lemmarho}
    For any $0<\omega< \operatorname{Min} \{ 1/N, 2\gamma/N - 1 \}$, there exists a constant $C = C(\omega, E_0^{(\delta)})$ such that, for all $\delta > 0,$
    $$\int_0^T\int_\Omega \Big\{ A \, |\rho^{(\delta)}(t,\xx)|^{\gamma + \omega} + \delta \, |\rho^{(\delta)}(t,\xx)|^{\beta + \omega}  \Big\}\,d\xx dt \leq C.$$ 
  \end{lemma}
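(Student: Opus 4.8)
The plan is to obtain the improved integrability estimate by testing the momentum equation \eqref{1.eq.momentum}---here with the regularized pressure $P^{(\delta)}(\rho) = A\rho^\gamma + \delta\rho^\beta$---against a carefully chosen test function built from the Bogovskii operator. Specifically, I would set
\[
\phi(t,\xx) := \Div^{-1}\Big[ (\rho^{(\delta)})^\omega - \frac{1}{|\Omega|}\int_\Omega (\rho^{(\delta)}(t,\yy))^\omega\, d\yy \Big](t,\xx),
\]
where $\Div^{-1}$ is the Bogovskii operator mapping functions of zero mean in $L^p(\Omega)$ into $W_0^{1,p}(\Omega;\R^N)$. Since $\rho^{(\delta)} \in L^\infty(0,T;L^\gamma(\Omega))$ and $\gamma > N/2$, the constraint $\omega < 2\gamma/N - 1$ (together with $\omega < 1/N$) will guarantee that $(\rho^{(\delta)})^\omega$ lies in the right Lebesgue spaces so that $\phi$ is an admissible test function of the quality needed to pair with each term of the momentum equation. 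Plugging $\phi$ in, the key term is
\[
\int_0^T\int_\Omega P^{(\delta)}(\rho^{(\delta)})\, (\rho^{(\delta)})^\omega\, d\xx\, dt = \int_0^T\int_\Omega \Big[ A\,(\rho^{(\delta)})^{\gamma+\omega} + \delta\,(\rho^{(\delta)})^{\beta+\omega} \Big]\, d\xx\, dt
\]
up to the harmless mean-value correction, which is controlled by \eqref{2.e6}--\eqref{2.e8}.

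The order of steps would be: first, record the properties of the Bogovskii operator (boundedness $L^p \to W_0^{1,p}$, and the time-regularity of $\partial_t \phi$ obtained by applying $\Div^{-1}$ to the continuity equation $\partial_t \rho^{(\delta)} + \Div(\rho^{(\delta)}\uu^{(\delta)}) = 0$); second, justify that $\phi$ is admissible in the weak formulation \eqref{2.eqm}; third, expand the weak formulation and isolate $\int\int P^{(\delta)}(\rho^{(\delta)})(\rho^{(\delta)})^\omega$ on one side; fourth, bound each of the remaining terms---the convective term $\int\int \rho^{(\delta)}\uu^{(\delta)}\otimes\uu^{(\delta)} : \nabla\phi$, the viscous term $\int\int \Sbb : \nabla\phi$, the time-derivative term $\int\int \rho^{(\delta)}\uu^{(\delta)}\cdot\partial_t\phi$, and the boundary/initial term---using the a priori bounds of Corollary~\ref{2.corollary1} (uniform in $\delta$, since $E_0^{(\delta)} \to E_0$), Sobolev embeddings, and Hölder's inequality, absorbing any small fractional power of $\int\int (\rho^{(\delta)})^{\gamma+\omega}$ into the left-hand side via Young's inequality. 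The restrictions on $\omega$ are exactly what makes all the Hölder exponents work out and the absorption legitimate; the constant $C$ depends only on $\omega$ and $E_0^{(\delta)}$ (hence, in the limit, on $E_0$), $T$, $\Omega$.

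The main obstacle I expect is the estimate of the term involving $\partial_t \phi$, i.e.\ $\int_0^T\int_\Omega \rho^{(\delta)}\uu^{(\delta)}\cdot \partial_t\phi\, d\xx\, dt$. Here one uses that $\partial_t\phi = \Div^{-1}[\partial_t(\rho^{(\delta)})^\omega - (\text{mean})]$ and that $\partial_t(\rho^{(\delta)})^\omega$ can be expressed, via the renormalized continuity equation, as $-\Div((\rho^{(\delta)})^\omega \uu^{(\delta)}) - (\omega-1)(\rho^{(\delta)})^\omega \Div\uu^{(\delta)}$; then $\Div^{-1}\Div$ is a bounded operator on $L^p$, so $\partial_t\phi$ inherits good bounds from $(\rho^{(\delta)})^\omega \uu^{(\delta)}$ and $(\rho^{(\delta)})^\omega \Div\uu^{(\delta)}$, but checking that $\rho^{(\delta)}\uu^{(\delta)}$ pairs with this in an integrable way forces the precise exponent arithmetic that $\omega < \min\{1/N, 2\gamma/N - 1\}$ encodes. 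Since this entire computation is carried out in \cite{FNP,F} for the constant-viscosity case and the density-dependence of $\lambda,\mu$ only enters the viscous term $\int\int \Sbb:\nabla\phi$---where $\lambda(\rhoeta), \mu(\rhoeta)$ are bounded pointwise in terms of $\|\rho^{(\delta)}\|_{L^\gamma}$ via \eqref{1.eq.viscosities2} and the compactness of $f \mapsto [f]^\eta$, and $\nabla\uu^{(\delta)} \in L^2$ uniformly by \eqref{2.e10}---the argument transfers essentially verbatim. I would therefore present the adaptation concisely and refer to \cite{F}, lemma~7.6 (cf.\ the Remark following Lemma~\ref{2.lemmarho}) for the routine details.
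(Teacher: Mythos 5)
Your proposal is correct and follows essentially the same approach as the paper: testing the momentum equation against the Bogovskii operator applied to $(\rho^{(\delta)})^\omega$ minus its spatial mean, isolating $\int\!\!\int P^{(\delta)}(\rho^{(\delta)})(\rho^{(\delta)})^\omega$, controlling the remaining terms by the uniform energy bounds, and handling $\partial_t\phi$ via the renormalized continuity equation, with routine details deferred to Feireisl's proposition~5.1. The paper regularizes the test function somewhat more carefully, using $\Div^{-1}\{\theta_n\star B_m(\rho^{(\delta)})-\text{mean}\}$ with $\theta_n$ mollifiers and $B_m$ a bounded approximation of $z\mapsto z^\omega$ before passing to the limit in $n$ and $m$, but the underlying argument is the one you describe.
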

  
  The proof of this estimate on the improved integrability of the densities, like that of Lemma~\ref{2.lemmarho} may be carried out line by line as the proof of proposition~5.1 in \cite{F} and, therefore, we omit it. The idea is to take the following test function in the momentum equation
  $$\varphi(t,\xx) = \Div^{-1} \Big\{ \big(\theta_n \star B_m(\rho^{(\delta)})\big) - \frac{1}{|\Omega|} \int_\Omega \big(\theta_n \star B_m(\rho^{(\delta)})\big)\,d\yy \Big\}(t,\xx),$$
  where $(\theta_n)$ is a mollifier sequence in $C_c^\infty(\R^N)$ and $B_m(z)$ is an adequate approximation of $z \mapsto z^\omega$. Note that the assumption that $\gamma>\frac{N}{2}$ is essential for $\omega$ to be positive.
 
As a consequence of this estimate and \eqref{4.energyestimate} we deduce the following result whose proof follows the same lines of that of Proposition~\ref{2.thmconvs}.
 
 \begin{proposition}  %\label{3.thmconvs}
Keeping the notations above and passing to subsequences $\delta_n \rightarrow 0$ if necessary, there exist $\rho \in L^\infty(0,T;L^\gamma(\Omega))$, $\overline{P(\rho)} \in L^{1+\frac{\omega}{\gamma}}((0,T)\X\Omega)$ (where $0<\omega<\operatorname{Min} \{ 1/N, 2\gamma/N -1 \}$), and $\uu \in L^2(0,T;H_0^1(\Omega; \R^N))$, such that 
\begin{equation*} %\label{3.convergences}
\begin{cases}
  \rho^{(\delta)}                                 \rightharpoonup \rho               &\text{ weakly in } L^{\gamma+\omega}((0,T)\times\Omega), \\
  P(\rho^{(\delta)})                              \rightharpoonup \overline{P(\rho)} &\text{ weakly in } L^{\frac{\gamma+\omega}{\gamma}}((0,T)\times\Omega),  \text{ and}\\
  \uu^{(\delta)}                                  \rightharpoonup \uu                &\text{ weakly in } L^2(0,T;H_0^1(\Omega; \R^N)).
\end{cases}
\end{equation*}

Moreover, we have the next convergences in the following spaces:
\begin{enumerate}
  \item[(i)]  $\rho^{(\delta)} \rightarrow \rho$ in $C([0,T];L_{\text{weak}}^{\gamma}(\Omega))$;
  \item[(ii)] $\lambda(\rhodeta) \rightarrow \lambda(\rhoeta)$ and $\mu(\rhodeta) \rightarrow \mu(\rhoeta)$ strongly in $C([0,T];C^\infty(\overline{\Omega}))$ (see Remark~\ref{A.A.remark1});
  \item[(iii)] $\rho^{(\delta)}\uu^{(\delta)} \rightarrow \rho \uu$ in $C([0,T];L_{\text{weak}}^{\frac{2\gamma}{\gamma + 1}}(\Omega))$;
  \item[(iv)] $\rho^{(\delta)} \uu^{(\delta)}\otimes\uu^{(\delta)} \rightharpoonup \rho\uu\otimes\uu$ weakly in $L^{2}(0,T; L^s(\Omega;\mathbb{R}^{N\times N}))$ for $1 < s < \infty$ satisfying $\frac{1}{s} = \frac{\gamma+1}{2\gamma} + \frac{1}{2^*}$ if $N\geq 3$, and $\frac{1}{s} > \frac{\gamma+1}{2\gamma}$ if $N = 2$.
\end{enumerate}

Finally, the pair $(\rho,\uu)$ is a weak solution to
 \begin{align} \label{3.system2}
&\partial_t \rho + \Div(\rho \uu) = 0, \\    
    &\partial_t \big(\rho \uu\big) + \Div \big(\rho \uu \otimes \uu \big) + \nabla \overline{P(\rho)} = \Div \Sbb,
\end{align}
with initial and boundary conditions
\begin{align}
    &\uu(t,x) = 0 &\text{ for $0 < t < T$ and  $\xx \in \partial\Omega$, }\label{4.system2conds0} \\
    &\rho(0,x) = \rho_0(x),\quad (\rho \uu)(0,x) = \m_0(x) &\text{ for }\xx \in \Omega,\label{4.system2conds}
  \end{align}
where $\Sbb = \lambda(\rhoeta)\, (\Div \uu) \mathbb{I} + 2\mu(\rhoeta)\, \mathbb{D}(\uu)$, and satisfies the energy estimate
\begin{multline}\label{5.energy}
  \int_\Omega \Big[ \frac{1}{2} \rho(t) |\uu(t)|^2 + \frac{A}{\gamma-1} \rho(t)^\gamma \Big] \, d\xx \\
  + \int_0^t\int_\Omega \Big[ \mu(\rhoeta) | \nabla \uu|^2 + \Big(\lambda(\rhoeta) + \mu(\rhoeta)\Big) (\Div \uu)^2  \Big] \, d\xx\, dt'  \\
  \leq \int_\Omega \Big[ \frac{1}{2} \rho_0^{-1} |\m_0|^2 + \frac{A}{\gamma-1} \rho_0^\gamma \Big] \, d\xx = E_0 
\end{multline}
for almost every $0 < t < T$.
\end{proposition}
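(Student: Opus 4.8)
The plan is to argue along the same lines as the proof of Proposition~\ref{2.thmconvs}, with the parameter $\delta$ now playing the role previously played by $\ve$; since no artificial viscosity is left in the continuity equation, the scheme is in fact slightly simpler. First I would collect the uniform-in-$\delta$ bounds. As $E_0^{(\delta)}\to E_0$, the energy estimate \eqref{4.energyestimate} provides, uniformly in $\delta$, bounds for $\rho^{(\delta)}$ in $L^\infty(0,T;L^\gamma(\Omega))$, for $\sqrt{\rho^{(\delta)}}\,\uu^{(\delta)}$ in $L^\infty(0,T;L^2(\Omega))$ and, via \eqref{1.eq.viscosities2}, for $\uu^{(\delta)}$ in $L^2(0,T;H_0^1(\Omega;\R^N))$; Lemma~\ref{3.lemmarho} supplies in addition a uniform bound for $A(\rho^{(\delta)})^\gamma$ in $L^{(\gamma+\omega)/\gamma}((0,T)\X\Omega)$ together with a uniform bound for $\delta(\rho^{(\delta)})^{\beta+\omega}$ in $L^1$, whence a Hölder interpolation gives $\int_0^T\!\!\int_\Omega \delta(\rho^{(\delta)})^\beta \le C\,\delta^{\omega/(\beta+\omega)}\to 0$, so that the artificial pressure disappears in the limit. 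Passing to subsequences $\delta_n\to 0$, I then obtain $\rho^{(\delta)}\rightharpoonup\rho$ in $L^{\gamma+\omega}$, $A(\rho^{(\delta)})^\gamma\rightharpoonup\overline{P(\rho)}$ in $L^{(\gamma+\omega)/\gamma}$ (hence $P(\rho^{(\delta)})\to\overline{P(\rho)}$ in $\mathcal D'$), and $\uu^{(\delta)}\rightharpoonup\uu$ in $L^2(0,T;H_0^1)$.

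Next come the time-compactness and identification arguments. Applying Proposition~\ref{A.B.prop2} to the continuity equation satisfied by $\rho^{(\delta)}$ yields (i); since $\gamma>N/2$ the embedding $L^\gamma(\Omega)\hookrightarrow H^{-1}(\Omega)$ is compact, so $\rho^{(\delta)}\to\rho$ strongly in $C([0,T];H^{-1}(\Omega))$, which together with $\uu^{(\delta)}\rightharpoonup\uu$ in $L^2(0,T;H_0^1)$ identifies the weak limit of $\rho^{(\delta)}\uu^{(\delta)}$ as $\rho\uu$. A second application of Proposition~\ref{A.B.prop2}, now to the momentum equation and using the uniform bound of $\rho^{(\delta)}\uu^{(\delta)}$ in $L^\infty(0,T;L^{2\gamma/(\gamma+1)}(\Omega))$, gives (iii); the compact embedding $L^{2\gamma/(\gamma+1)}(\Omega)\hookrightarrow H^{-1}(\Omega)$ (again $\gamma>N/2$) together with $\uu^{(\delta)}\rightharpoonup\uu$ in $L^2(0,T;L^{2^*})$ then gives (iv). Property (ii) follows, as in Proposition~\ref{2.thmconvs}, from the compactness of the mollification operator $f\in L^\gamma(\Omega)\mapsto[f]^\eta\in C^k(\overline\Omega)$ combined with (i); in particular $\lambda(\rhodeta)\to\lambda(\rhoeta)$ and $\mu(\rhodeta)\to\mu(\rhoeta)$ in $C([0,T];C^\infty(\overline\Omega))$, so that $\Sbb^{(\delta)}\to\Sbb$ in $\mathcal D'$ with $\Sbb=\lambda(\rhoeta)(\Div\uu)\mathbb I+2\mu(\rhoeta)\mathbb D(\uu)$. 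With (iii), (iv), $P(\rho^{(\delta)})\to\overline{P(\rho)}$ and $\Sbb^{(\delta)}\to\Sbb$ at hand, I pass to the limit in the weak formulations of the continuity and momentum equations for the $\delta$-problem supplied by Theorem~\ref{4.thm1} to conclude that $(\rho,\uu)$ is a weak solution of \eqref{3.system2}; the initial conditions \eqref{4.system2conds} are recovered from the convergences in $C([0,T];L^\gamma_{\text{weak}}(\Omega))$ and $C([0,T];L^{2\gamma/(\gamma+1)}_{\text{weak}}(\Omega))$ together with $\rho_0^{(\delta)}\to\rho_0$ in $L^\gamma(\Omega)$ and $\m_0^{(\delta)}(\rho_0^{(\delta)})^{-1/2}\to\m_0(\rho_0)^{-1/2}$ in $L^2(\Omega;\R^N)$.

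Finally, for the energy inequality \eqref{5.energy} I would test \eqref{4.energyestimate} against an arbitrary nonnegative $\psi\in C_c^\infty([0,T))$ and let $\delta\to0$: the term $\frac{\delta}{\beta-1}\rho(t)^\beta$ on the left is nonnegative and may be discarded, while on the right $E_0^{(\delta)}\to E_0$, and the remaining terms pass to the limit by weak lower semicontinuity — joint convexity and lower semicontinuity of $(\rho,\m)\mapsto|\m|^2/\rho$ for the kinetic energy (along $\rho^{(\delta)}\uu^{(\delta)}\rightharpoonup\rho\uu$ and $\rho^{(\delta)}\rightharpoonup\rho$), convexity of $z\mapsto z^\gamma$ for the potential energy, and, for the dissipation, the fact that $\sqrt{\mu(\rhodeta)}\to\sqrt{\mu(\rhoeta)}$ and $\sqrt{\lambda(\rhodeta)+\mu(\rhodeta)}\to\sqrt{\lambda(\rhoeta)+\mu(\rhoeta)}$ uniformly (note $\lambda+\mu\ge 0$ by \eqref{1.eq.viscosities2}) while $\nabla\uu^{(\delta)}\rightharpoonup\nabla\uu$ weakly in $L^2$. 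I do not expect any genuine obstacle inside this Proposition: the only real subtlety is the identification of the weak limits of $\rho^{(\delta)}\uu^{(\delta)}$ and $\rho^{(\delta)}\uu^{(\delta)}\otimes\uu^{(\delta)}$, which rests on the compact Sobolev embeddings valid precisely because $\gamma>N/2$, and the treatment of the density-mollified viscosities through the compactness of the mollification operator. The genuinely hard point — the identification $\overline{P(\rho)}=P(\rho)$, equivalently the strong convergence of $\rho^{(\delta)}$ — is deliberately left out of this statement and is handled separately by the same renormalization-and-convexity scheme used for the vanishing-viscosity limit, where Theorem~\ref{2.thm.EVF} enters.
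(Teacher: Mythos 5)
Your proposal is correct and follows essentially the same route the paper has in mind: the paper gives no independent proof of this proposition, stating only that it "follows the same lines of that of Proposition~\ref{2.thmconvs}," and your argument is precisely that proof transposed to the $\delta\to 0$ setting (with the $\ve$-terms absent and the artificial-pressure term dispatched by the interpolation $\int\delta(\rho^{(\delta)})^\beta\le C\delta^{\omega/(\beta+\omega)}$, which the paper records slightly later as strong $L^{(\beta+\omega)/\beta}$ convergence). The chain "energy bounds and Lemma~\ref{3.lemmarho} $\Rightarrow$ weak limits; Proposition~\ref{A.B.prop2} plus the compact embeddings $L^\gamma,L^{2\gamma/(\gamma+1)}\hookrightarrow H^{-1}$ (valid since $\gamma>N/2$) $\Rightarrow$ (i), (iii), (iv); compactness of the mollification $\Rightarrow$ (ii); passage to the limit in the weak formulation; weak lower semicontinuity (convexity of $(\rho,\m)\mapsto|\m|^2/\rho$ and of $z\mapsto z^\gamma$, uniform convergence of the viscosity coefficients) for \eqref{5.energy}" is exactly the intended argument.
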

 
 Once more, all that is left then is to show that $\overline{P(\rho)} = A \rho^\gamma$, which is the same as establishing that $\rho^{(\delta)} \rightarrow \rho$ almost everywhere.
  
  This can be achieved in a similar fashion we proceeded previously. However, due to the lack of higher integrability of the density, we are obliged to consider truncations of the sequence $\rho^{(\delta)}$. Following  \cite{FNP}, we choose a function $T \in C^\infty(\R)$ such that
  \begin{enumerate}
    \item $T(z) = z$ for $z \leq 1$,
    \item $T(z) = 2$ for $z \geq 3$, and
    \item $T$ is concave.
  \end{enumerate}
  Furthermore, for any real number $M > 0$, put $T_M : \R \rightarrow \R$ as
  $$T_M(z) = M T\Big( \frac{z}{M} \Big).$$
  
  Notice that $T_M(z) = z$ for $z \leq M$, $T_M(z) = 2M$ for $z \geq 3M$, each $T_M$ is concave and $T_M'(z)$ is uniformly bounded in $0<M<\infty$ and $-\infty < z < \infty$. 
  
  Moreover, as each $\rho^{(\delta)}$ is a renormalized solution to the continuity equation, it holds that
  $$T_M (\rho^{(\delta)})_t + \Div ( T_M(\rho^{(\delta)}) \uu^{(\delta)} ) + (T_M'(\rho^{(\delta)}) \rho^{(\delta)} - T_M(\rho^{(\delta)})) \Div \uu^{(\delta)} = 0,$$
  for any $\delta> 0$ and $M > 0$. Using Proposition~\ref{A.B.prop2} we find that
  $$T_M(\rho^{(\delta)}) \rightarrow \overline{T_M(\rho)} \text{ in } C([0,T];L_\text{weak}^p(\Omega)) \text{ for any } 1 \leq p < \infty.$$
Thus, since $L^p(\Omega)$ is compactly imbedded in $H^{-1}(\Omega)$  for large enough $p$ and $\uu^{(\d)}\to \uu$ weakly in $L^2(0,T;H^{-1}(\Omega))$, we have that
\[
\Div ( T_M(\rho^{(\delta)}) \uu^{(\delta)} ) \to \Div ( \overline{T_M(\rho)}\, \uu )
\]
in the sense of distributions and consequently, for any $M > 0$, the following equation holds also in the sense of distributions
 \begin{equation}\label{5.eq.TMbar}
 \overline{T_M (\rho)}_t + \Div ( \overline{T_M(\rho)}\, \uu ) + \overline{(T_M'(\rho) \rho - T_M(\rho)) \Div \uu} = 0,
 \end{equation}
where $\overline{(T_M'(\rho) \rho - T_M(\rho)) \Div \uu}$ is a weak limit in $L^2((0,T)\X\Omega)$ of $(T_M'(\rho^{(\delta)}) \rho^{(\delta)} - T_M(\rho^{(\delta)}) \Div \uu^{(\delta)}$ as $\d\to 0$.
  
At this point, we realize that by Lemma~\ref{3.lemmarho} we have that $\d\rho^{\beta}\to 0$ strongly in $L^{(\beta+\omega)/\beta}((0,T)\times\Omega)$. Thus, $\overline{P(\rho)}=A\overline{\rho^\gamma}$, from which a direct application of Theorem~\ref{2.thm.EVF} yields the following result.
  
  \begin{lemma} \label{3.effviscflux}
    For any $\varphi \in C_c^\infty((0,T)\times\Omega)$ and any $M > 0$,
        \begin{multline*}
      \int_0^T \int_\Omega \varphi T_M(\rho^{(\delta)}) \Big[ \frac{A(\rho^{(\delta)})^\gamma}{2\mu(\rhodeta)+ \lambda(\rhodeta)} - \Div \uu^{(\delta)} \Big] \, d\xx dt \nonumber \\
               \rightarrow \int_0^T \int_\Omega \varphi \overline{T_M(\rho)} \Big[ \frac{\overline{P(\rho)}}{2\mu(\rhoeta) + \lambda(\rhoeta)} - \Div \uu \Big] \, d\xx dt.
    \end{multline*}
  \end{lemma}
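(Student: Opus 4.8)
The plan is to obtain Lemma~\ref{3.effviscflux} as an instance of Theorem~\ref{2.thm.EVF}, applied to the sequence $(\rho^{(\delta_n)},\uu^{(\delta_n)})$ along the subsequence $\delta_n\to 0$ of the preceding Proposition, with the bounded continuous function $B$ taken to be the truncation $T_M$ (for $M$ fixed). First I would verify that the hypotheses of Theorem~\ref{2.thm.EVF} hold along this sequence: by Theorem~\ref{4.thm1} each $(\rho^{(\delta)},\uu^{(\delta)})$ is a weak energy solution of the Navier--Stokes equations with viscosities of the form \eqref{1.eq.viscosities1}--\eqref{1.eq.viscosities2} and is a renormalized solution of the continuity equation, while the weak convergences required in \eqref{2.convergen} and the uniform bound \eqref{2.rhoun2} on $\rho^{(\delta)}|\uu^{(\delta)}|^2$ in $L^\infty(0,T;L^1(\Omega))$ are exactly what the preceding Proposition and the energy estimate \eqref{5.energy} furnish. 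The truncation $T_M\in C^\infty(\R)$ is bounded, with $0\le T_M\le 2M$, hence admissible in Theorem~\ref{2.thm.EVF}; in fact $b_M(z):=T_M'(z)z-T_M(z)$ is continuous and bounded with $T_M(0)=b_M(0)=0$, so $T_M$ already lies in the reduced class used in Step~\#0 of that proof, and $T_M(\rho^{(\delta)})\to\overline{T_M(\rho)}$ weakly-$\star$ in $L^\infty((0,T)\times\Omega)$.

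The one step requiring new care, and the only genuine difference from the proof of Theorem~\ref{2.thm.EVF}, is that the momentum equation for $(\rho^{(\delta)},\uu^{(\delta)})$ carries the pressure $P^{(\delta)}(\rho)=A\rho^\gamma+\delta\rho^\beta$ rather than $A\rho^\gamma$. Here Lemma~\ref{3.lemmarho} is decisive: it gives $\delta(\rho^{(\delta)})^\beta\to 0$ strongly in $L^{(\beta+\omega)/\beta}((0,T)\times\Omega)$, with $(\beta+\omega)/\beta>1$, so that the weak limit $\overline{P(\rho)}$ of $P^{(\delta)}(\rho^{(\delta)})$ equals $A\overline{\rho^\gamma}$. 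I would then run the proof of Theorem~\ref{2.thm.EVF} verbatim, carrying the extra summand $\delta(\rho^{(\delta)})^\beta$ alongside $A(\rho^{(\delta)})^\gamma$ throughout. This term enters only in the effective viscous pressure $q_\delta$, in the analogue of the term $I_7^n$ of that proof (through $(P^{(\delta)}(\rho^{(\delta)})\,\mathbb I-\mathbb S^{(\delta)})\cdot\nabla\varphi$), and, at the final stage, inside the integrand $\varphi\, T_M(\rho^{(\delta)})\,[(A(\rho^{(\delta)})^\gamma+\delta(\rho^{(\delta)})^\beta)/(2\mu(\rhodeta)+\lambda(\rhodeta))-\Div\uu^{(\delta)}]$. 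In each of these occurrences the factor multiplying $\delta(\rho^{(\delta)})^\beta$ is bounded in $L^\infty$, being a product of $\varphi$, of $T_M(\rho^{(\delta)})$ and $F(\rhodeta)$, and, where relevant, of $\Delta^{-1}\nabla$ applied to a bounded density (cf.\ Proposition~\ref{A.C.1}); hence the strong convergence of $\delta(\rho^{(\delta)})^\beta$ to $0$ in $L^{(\beta+\omega)/\beta}$ forces every such contribution to $0$. In particular the $\delta(\rho^{(\delta)})^\beta$-part of the final integrand drops out, which is why the statement of the Lemma carries only $A(\rho^{(\delta)})^\gamma$ in the numerator on the left-hand side.

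All remaining passages to the limit are precisely those in the proof of Theorem~\ref{2.thm.EVF}: Proposition~\ref{A.B.prop2} yields $T_M(\rho^{(\delta)})F(\rhodeta)\to\overline{T_M(\rho)}F(\rhoeta)$ in $C([0,T];L^p_{\text{weak}}(\Omega))$; the regularizing properties of $\Delta^{-1}\nabla$ (Proposition~\ref{A.C.1}), the compact embedding $L^\gamma(\Omega)\hookrightarrow H^{-1}(\Omega)$ available since $\gamma>N/2$, and Theorem~\ref{A.C.3} dispose of the convective contributions $I_1^{(\delta)},\dots,I_5^{(\delta)}$; and the Coifman--Meyer estimates (Theorem~\ref{A.C.2}, part~(2)) control the commutators $[\nabla\Delta^{-1}\nabla,\varphi]:\mathbb S^{(\delta)}$ and $[\nabla\Delta^{-1}\nabla,\mu(\rhodeta)]:\mathbb D(\uu^{(\delta)})$. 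I therefore expect no obstacle beyond the bookkeeping of the vanishing artificial pressure term described above, and the conclusion is the asserted convergence.
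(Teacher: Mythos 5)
Your argument is correct and follows the paper's own route: invoke Lemma~\ref{3.lemmarho} to conclude that $\delta(\rho^{(\delta)})^\beta\to 0$ strongly so that $\overline{P(\rho)}=A\overline{\rho^\gamma}$, and then reduce to Theorem~\ref{2.thm.EVF} applied to the truncations $T_M$. You usefully make explicit the bookkeeping for the vanishing artificial pressure that the paper compresses into the phrase ``a direct application of Theorem~\ref{2.thm.EVF}''; this is worth spelling out, since the hypotheses of that theorem as stated assume the fixed pressure law \eqref{1.eq.pressure} rather than the $\delta$-dependent law $P^{(\delta)}$, and your observation that the $\delta(\rho^{(\delta)})^\beta$ contributions are always multiplied by $L^\infty$-bounded factors (via $\varphi$, $T_M$, $F$, and Proposition~\ref{A.C.1}) is exactly what closes that gap.
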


\begin{remark}
Since $2\mu(\rhodeta)+\lambda(\rhodeta)\to 2\mu(\rhodeta)+\lambda(\rhodeta)$ strongly, then it follows directly from Lemma~\ref{3.effviscflux} that
\begin{multline}\label{4.effviscflux2}
      \int_0^T \int_\Omega \varphi T_M(\rho^{(\delta)}) \Big[ A(\rho^{(\delta)})^\gamma - \Big( 2\mu(\rhodeta)+ \lambda(\rhodeta) \Big)\Div \uu^{(\delta)} \Big] \, d\xx dt \\
               \rightarrow \int_0^T \int_\Omega \varphi \overline{T_M(\rho)}\, \Big[ \overline{P(\rho)} - \Big( 2\mu(\rhoeta) + \lambda(\rhoeta) \Big)\Div \uu \Big] \, d\xx dt.
    \end{multline}
\end{remark}
   
With this observation at hand, we deduce the following estimate, which enables for the proof that the limit functions $(\rho,\uu)$ are renormalized solutions of the continuity equation.
 
 \begin{lemma}[Bounds on the oscillation defect measure]\label{5.l.odm}
 	There exists a constant $C$ such that
 	$$\limsup_{\delta \rightarrow 0} \Vert \, T_M(\rho^{(\delta)}) - T_M(\rho) \,\Vert_{L^{\gamma+1}((0,T)\X\Omega)} \leq C,$$
 	for any $M > 1$.    
 \end{lemma}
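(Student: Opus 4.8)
The plan is to combine the weak continuity of the effective viscous flux of Lemma~\ref{3.effviscflux} (in the form \eqref{4.effviscflux2}) with an elementary pointwise inequality and a self-improving estimate. I would first record that, since $T_M$ is nondecreasing, $1$-Lipschitz and vanishes at $0$, while $t\mapsto t^\gamma$ is convex with $|a^\gamma-b^\gamma|\ge|a-b|^\gamma$ for $a,b\ge0$ (here $\gamma>N/2\ge1$), one has
\[
A\,\bigl|T_M(a)-T_M(b)\bigr|^{\gamma+1}\le\bigl(Aa^\gamma-Ab^\gamma\bigr)\bigl(T_M(a)-T_M(b)\bigr)\qquad(a,b\ge0).
\]
Applying this with $a=\rho^{(\delta)}$, $b=\rho$, multiplying by a cut-off $\varphi\in C_c^\infty((0,T)\X\Omega)$ with $0\le\varphi\le1$ and integrating, it is enough to bound, uniformly in $M>1$ and in such $\varphi$, the quantity
\[
Y_M^\varphi:=\limsup_{\delta\to0}\int_0^T\!\!\int_\Omega\varphi\,\bigl|T_M(\rho^{(\delta)})-T_M(\rho)\bigr|^{\gamma+1}\,d\xx\,dt,
\]
since once $Y_M^\varphi\le C_0$ with $C_0$ independent of $\varphi$, letting $\varphi\uparrow1$ gives the assertion (the remainder $\int_0^T\!\int_\Omega(1-\varphi)|\cdots|^{\gamma+1}$ being controlled, for a fixed $M$, by the trivial bound $|T_M(\rho^{(\delta)})-T_M(\rho)|\le2M$).

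To estimate $Y_M^\varphi$ I would decompose, with $a_\delta:=2\mu(\rhodeta)+\lambda(\rhodeta)$, $a:=2\mu(\rhoeta)+\lambda(\rhoeta)$, $E_\delta:=A(\rho^{(\delta)})^\gamma-a_\delta\Div\uu^{(\delta)}$ and $\overline E:=\overline{P(\rho)}-a\Div\uu$,
\[
A(\rho^{(\delta)})^\gamma-A\rho^\gamma=\bigl(E_\delta-\overline E\bigr)+A\bigl(\overline{\rho^\gamma}-\rho^\gamma\bigr)+\bigl(a_\delta\Div\uu^{(\delta)}-a\Div\uu\bigr),
\]
where I use $\overline{P(\rho)}=A\overline{\rho^\gamma}$, valid because $\delta(\rho^{(\delta)})^\beta\to0$ strongly by Lemma~\ref{3.lemmarho}. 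Pairing each piece with $\varphi\bigl(T_M(\rho^{(\delta)})-T_M(\rho)\bigr)$ and letting $\delta\to0$: the $\bigl(E_\delta-\overline E\bigr)$-term tends to $0$ --- this is precisely \eqref{4.effviscflux2}, the auxiliary cross terms being handled by the (subsequential) weak convergences of $(\rho^{(\delta)})^\gamma$, $\Div\uu^{(\delta)}$ and $T_M(\rho^{(\delta)})$ together with the strong convergence $a_\delta\to a$ in $C([0,T];C^\infty(\overline\Omega))$ established earlier; the $A\bigl(\overline{\rho^\gamma}-\rho^\gamma\bigr)$-term tends to $\int_0^T\!\int_\Omega\varphi\,A\bigl(\overline{\rho^\gamma}-\rho^\gamma\bigr)\bigl(\overline{T_M(\rho)}-T_M(\rho)\bigr)$, which is $\le0$ since $\overline{\rho^\gamma}\ge\rho^\gamma$ by convexity of $t\mapsto t^\gamma$ and $\overline{T_M(\rho)}\le T_M(\rho)$ by concavity of $T_M$, and may therefore be dropped.

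The decisive step --- which I expect to be the main obstacle to set up cleanly --- is the divergence term. After replacing $a_\delta$ by $a$ (licit by the strong convergence of the viscosities) and splitting $T_M(\rho^{(\delta)})-\overline{T_M(\rho)}=\bigl(T_M(\rho^{(\delta)})-T_M(\rho)\bigr)+\bigl(T_M(\rho)-\overline{T_M(\rho)}\bigr)$, its limit equals
\[
\lim_{\delta\to0}\int_0^T\!\!\int_\Omega\varphi\,a\,\Div\uu^{(\delta)}\bigl(T_M(\rho^{(\delta)})-T_M(\rho)\bigr)\,d\xx\,dt+\int_0^T\!\!\int_\Omega\varphi\,a\,\Div\uu\,\bigl(T_M(\rho)-\overline{T_M(\rho)}\bigr)\,d\xx\,dt.
\]
I would not try to identify this quantity but merely estimate it: by Cauchy--Schwarz and Hölder (using $\gamma+1\ge2$, which holds as $\gamma>1$) each of the two pieces is at most $C\,\bigl(\sup_\delta\|\nabla\uu^{(\delta)}\|_{L^2((0,T)\X\Omega)}\bigr)\bigl(Y_M^\varphi\bigr)^{1/(\gamma+1)}$ --- for the second piece one also invokes weak lower semicontinuity of the $L^{\gamma+1}(\varphi\,d\xx\,dt)$-norm to replace $\|T_M(\rho)-\overline{T_M(\rho)}\|$ by $\bigl(Y_M^\varphi\bigr)^{1/(\gamma+1)}$ --- and $\sup_\delta\|\nabla\uu^{(\delta)}\|_{L^2}<\infty$ follows from the energy estimate \eqref{4.energyestimate}, \eqref{1.eq.viscosities2} and $E_0^{(\delta)}\to E_0$. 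Combining this with the pointwise inequality yields $A\,Y_M^\varphi\le C\bigl(Y_M^\varphi\bigr)^{1/(\gamma+1)}$, hence (the case $Y_M^\varphi=0$ being trivial) $Y_M^\varphi\le(C/A)^{(\gamma+1)/\gamma}$, a bound independent of $M$ and of $\varphi$; letting $\varphi\uparrow1$ concludes. The heart of the matter is thus this absorption: the offending divergence term depends only \emph{linearly} on $\|T_M(\rho^{(\delta)})-T_M(\rho)\|_{L^{\gamma+1}}$, while $Y_M^\varphi$ is \emph{superlinear} in it, so the term is swallowed and the final bound is uniform in $M$.
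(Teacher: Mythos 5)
Your proof is correct and follows essentially the same strategy as the paper: the pointwise inequality $A|T_M(a)-T_M(b)|^{\gamma+1}\le(Aa^\gamma-Ab^\gamma)(T_M(a)-T_M(b))$, the weak continuity of the effective viscous flux in the form \eqref{4.effviscflux2}, the convexity/concavity argument to drop the term involving $\overline{\rho^\gamma}-\rho^\gamma$, and finally the self-improving Young-type estimate obtained from H\"older and the energy bound. A welcome extra rigor is the explicit cutoff $\varphi$ followed by $\varphi\uparrow1$, which tidies up the fact that \eqref{4.effviscflux2} is stated for $\varphi\in C_c^\infty((0,T)\times\Omega)$ while the paper implicitly applies it with $\varphi\equiv1$.
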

 \begin{proof}
 %Since $z\to z^\gamma$ is convex and $T_m$ concave we see that

  Using the inequality $|y - z|^{\gamma+1} \leq (y^\gamma - z^\gamma)(y - z)$, which holds for any non-negative $y$ and $z$, we deduce that
 	\begin{equation*}
 	| T_M(y) - T_M(z)|^{\gamma+1}  \leq (T_M(y)^\gamma - T_M(z)^\gamma)(T_M(y) - T_M(z)),  \quad\quad \,0 \leq y, 0\leq z.
 	\end{equation*}
 From the properties of $T_M$ we see that the function $y\to T_m(y)^\gamma-y^\gamma$ must be non-increasing. Thus, we conclude that
 \begin{equation*}
 	| T_M(y) - T_M(z)|^{\gamma+1}  \leq( y^\gamma - z^\gamma)(T_M(y) - T_M(z)),  \quad\quad \,0 \leq y, 0\leq z.
 	\end{equation*}
 	
Next, since the $y\to y^\gamma$ is convex and $T_m$ is concave, by Lemma~\ref{teo2.11} we see that
\begin{align}
&\limsup_{\d\to 0}\int_0^T \int_\Omega | T_M(\rho^{(\d)}) - T_M(\rho)|^{\gamma+1}\, d\xx dt\nonumber \\
&\qquad\leq \limsup_{\d\to 0}\int_0^T\int_\Omega ((\rho^{(\d)})^\gamma-\rho^\gamma)(T_M(\rho^{(\d)})-T_M(\rho))d\x dt\nonumber\\
&\qquad\leq \limsup_{\d\to 0}\int_0^T\int_\Omega ((\rho^{(\d)})^\gamma-\rho^\gamma)(T_M(\rho^{(\d)})-T_M(\rho))d\x dt \nonumber\\
&\quad\quad\quad\qquad\qquad\qquad+ \int_0^T\int_\Omega (\overline{\rho^\gamma}-\rho^\gamma)(T_M(\rho)-\overline{T_M(\rho)})d\x dt\nonumber\\
&\qquad =  \limsup_{\d\to 0} \int_0^T\int_\Omega   \big[(\rho^{(\delta)})^\gamma T_M(\rho^{(\delta)}) - \overline{\rho^{\gamma}}\,\, \overline{T_M(\rho)} \big] \, d\xx dt\label{5.ineq.convex}
\end{align}

Then, denoting $G(\rhoeta)=2\mu(\rhoeta)+\lambda(\rhoeta)$, we may use \eqref{4.effviscflux2} and the energy inequality \eqref{5.energy} to conclude that
 	\begin{align*}
 	\limsup_{\d\to 0}& \int_0^T \int_\Omega | T_M(\rho^{(\d)}) - T_M(\rho)|^{\gamma+1}\, d\xx dt  \\
 	&\leq \limsup \int_0^T\int_\Omega \Big[ T_M(\rho^{(\delta)}) \Div \uu^{(\delta)}  - \overline{T_M(\rho)}\Div \uu \Big]G(\rhoeta)\, d\xx dt \\
 	&\leq  \|G(\rho)\|_{L^{\frac{\gamma+1}{\gamma-1}}((0,T)\times\Omega)}^{2}  \Big(\sup_{0<\kappa<1} \int_0^T \int_\Omega G(\rhoeta)(\Div \uu^{(\kappa)})^2\, d\xx dt \Big)^{1/2}\\ &\quad\quad\quad\quad\quad\quad\quad \Big(\int_0^T \int_\Omega | \overline{T_M(\rho)} - T_M(\rho)|^{\gamma+1}\, d\xx dt \Big)^{\frac{1}{\gamma+1}} \\
 	&\leq C   \Big(\int_0^T \int_\Omega | \overline{T_M(\rho)} - T_M(\rho)|^{\gamma+1}\, d\xx dt \Big)^{\frac{1}{\gamma+1}},
 	\end{align*}
 	hence the desired conclusion.
 \end{proof}
 
 	 \begin{lemma} [$\rho$ is a renormalized solution]
 		The limit functions $\rho$ and $\uu$ satisfy the continuity equation in the sense of renormalized solutions, i.e., extending $\rho$ and $\uu$ by zero outside $\Omega$, we have
 		\begin{multline}\label{5.eq.renorm}
 		B(\rho)_t + \Div (B(\rho) \Div \uu) + (B'(\rho)\rho - B(\rho)) \Div \uu = 0, \\
 		\text{ in the sense of distributions in $(0,T)\X\R^N$}
 		\end{multline}
 		where $B \in C([0,\infty))\cap C^1((0,\infty))$ satisfies
 		$$|\zeta B'(\zeta)| \leq C(\zeta^{\theta} + \zeta^{\gamma/2}) \quad\quad\quad\quad[\,\forall \zeta > 0\,]$$
 		for some $C>0$ and $0<\theta < \gamma/2$.
 	\end{lemma}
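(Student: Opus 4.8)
The plan is to pass to the limit $\delta\to 0$ in the renormalized continuity equations for the $\rho^{(\delta)}$, working with the truncations $T_M$ and, crucially, with the uniform oscillation‑defect bound of Lemma~\ref{5.l.odm}; this is the scheme of \cite{FNP} (cf.\ \cite{F}). Since each $\rho^{(\delta)}$ is a renormalized solution by Theorem~\ref{4.thm1}, the choice $B=T_M$ gives
\[
\partial_t T_M(\rho^{(\delta)})+\Div\big(T_M(\rho^{(\delta)})\uu^{(\delta)}\big)+\big(T_M'(\rho^{(\delta)})\rho^{(\delta)}-T_M(\rho^{(\delta)})\big)\Div\uu^{(\delta)}=0
\]
in $\mathcal D'((0,T)\X\R^N)$ for every $M>0$, and, as already recorded in \eqref{5.eq.TMbar}, letting $\delta\to0$ (via Proposition~\ref{A.B.prop2}, the weak convergence $\Div\uu^{(\delta)}\rightharpoonup\Div\uu$ in $L^2$, and the compactness of $T_M(\rho^{(\delta)})$ in $C([0,T];H^{-1}(\Omega))$ coming from the equation itself) yields $\partial_t\overline{T_M(\rho)}+\Div(\overline{T_M(\rho)}\,\uu)+\overline{(T_M'(\rho)\rho-T_M(\rho))\Div\uu}=0$.

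Because $\overline{T_M(\rho)}\in L^\infty((0,T)\X\Omega)$, $\uu\in L^2(0,T;H_0^1(\Omega))$, and, for each fixed $M$, the source term above belongs to $L^2((0,T)\X\Omega)$, the DiPerna--Lions regularization technique (the same tool invoked for Lemma~\ref{2.renormalized}, see \cite[Proposition~4.2]{F}) applies to the bounded function $\overline{T_M(\rho)}$: for every bounded $b\in C[0,\infty)\cap C^1(0,\infty)$ whose derivative $b'$ has compact support,
\[
\partial_t b(\overline{T_M(\rho)})+\Div\big(b(\overline{T_M(\rho)})\,\uu\big)+\big(b'(\overline{T_M(\rho)})\overline{T_M(\rho)}-b(\overline{T_M(\rho)})\big)\Div\uu=-\,b'(\overline{T_M(\rho)})\,\overline{(T_M'(\rho)\rho-T_M(\rho))\Div\uu}
\]
in $\mathcal D'((0,T)\X\R^N)$. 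We then let $M\to\infty$. By Lemma~\ref{3.lemmarho} the family $\{\rho^{(\delta)}\}$ is equi‑integrable in $L^1$, so $\|T_M(\rho^{(\delta)})-\rho^{(\delta)}\|_{L^1}\le\int_{\{\rho^{(\delta)}>M\}}\rho^{(\delta)}\to0$ uniformly in $\delta$, whence $\overline{T_M(\rho)}\to\rho$ strongly in $L^1((0,T)\X\Omega)$ and (along a subsequence) a.e.; since $b$ and $\zeta\mapsto b'(\zeta)\zeta$ are bounded and $\Div\uu\in L^2$, the first three terms of the last display converge in $\mathcal D'$ to the corresponding terms of the renormalized equation for $\rho$.

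The decisive point is that the source term tends to $0$ as $M\to\infty$. Writing $g_M^{(\delta)}:=T_M'(\rho^{(\delta)})\rho^{(\delta)}-T_M(\rho^{(\delta)})$, which is supported in $\{\rho^{(\delta)}>M\}$, weak lower semicontinuity of the $L^1$‑norm together with H\"older's inequality reduces the matter to showing $\lim_{M\to\infty}\limsup_{\delta\to0}\big\|g_M^{(\delta)}\,\Div\uu^{(\delta)}\big\|_{L^1}=0$, and this follows from the uniform estimate $\sup_{M>1}\limsup_{\delta\to0}\|T_M(\rho^{(\delta)})-T_M(\rho)\|_{L^{\gamma+1}}\le C$ of Lemma~\ref{5.l.odm}, combined with the concavity of $T_M$, the identity $T_M(z)=z$ for $z\le M$, and the uniform bound $\|\Div\uu^{(\delta)}\|_{L^2}\le C$ coming from \eqref{5.energy}. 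This establishes \eqref{5.eq.renorm} for every $b$ of the type above.

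Finally, the class of admissible $B$ is enlarged to those with $|\zeta B'(\zeta)|\le C(\zeta^\theta+\zeta^{\gamma/2})$: approximating $B-B(0)$ by functions $b_k$ of the previous type, uniformly on compacta and with $|b_k'(\zeta)\zeta|\le C(\zeta^\theta+\zeta^{\gamma/2})$, the terms $b_k(\rho)$, $b_k(\rho)\uu$, $b_k(\rho)\Div\uu$ and $b_k'(\rho)\rho\,\Div\uu$ are bounded in suitable Lebesgue spaces and lie in $L^1((0,T)\X\Omega)$, since $\rho^{\gamma/2}\in L^\infty(0,T;L^2(\Omega))$, $\rho^\theta\in L^\infty(0,T;L^{\gamma/\theta}(\Omega))$ with $\gamma/\theta>2$, $\uu\in L^2(0,T;H_0^1(\Omega))$ and $\Div\uu\in L^2((0,T)\X\Omega)$, so one may pass to the limit $k\to\infty$. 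The main obstacle is precisely the vanishing of the source term as $M\to\infty$: neither $\Div\uu^{(\delta)}$ nor $\rho^{(\delta)}$ has enough integrability on its own, and it is exactly the uniform control of the oscillation defect measure in Lemma~\ref{5.l.odm} that closes this gap.
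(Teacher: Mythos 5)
Your proposal follows essentially the same route as the paper's own (sketched) proof, which explicitly defers to Lemma 4.4 of Feireisl--Novotn\'y--Petzeltov\'a: start from the equation satisfied by $\overline{T_M(\rho)}$, apply the DiPerna--Lions regularization to obtain the renormalized form with the extra source term $-b'(\overline{T_M(\rho)})\,\overline{(T_M'(\rho)\rho - T_M(\rho))\Div\uu}$, and let $M\to\infty$, using the uniform oscillation-defect-measure bound of Lemma~\ref{5.l.odm} together with the energy bound on $\Div\uu^{(\delta)}$ to kill the source term, then extend to the full class of $B$ by approximation. You correctly identify all of these ingredients and the role each plays, so the argument matches the paper's.
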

 		The proof of this Lemma consists in regularizing equation \eqref{5.eq.TMbar} through a mollifying operator $S_k$, multiplying the resulting equation by $B'(S_k\overline{T_M(\rho)})$, and letting $k\to \infty$ first and then $M\to \infty$ to find equation \eqref{5.eq.renorm} in the limit, wherein the convergence of this last limit is allowed by the bounds on the oscillation defect measure from Lemma~\ref{5.l.odm}. Since this proof can be carried out line by line as that of lemma 4.4. in \cite{FNP} we omit the details.

 Finally, we deduce the strong convergence of the densities $\rho^{(\delta)}$.
 
 \begin{theorem}
 	$\rho^{(\delta)} \rightarrow \rho$ strongly in $L^1((0,T)\X\Omega)$.
 \end{theorem}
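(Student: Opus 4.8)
The plan is to follow the convexity strategy of Feireisl, Novotn\'y and Petzeltov\'a \cite{FNP} (see also \cite{F}). Since $\{\rho^{(\delta)}\}$ is bounded in $L^{\gamma+\omega}((0,T)\X\Omega)$ with $\gamma+\omega>1$, it is uniformly integrable, so by Vitali's theorem it suffices to show $\rho^{(\delta)}\to\rho$ a.e.; because $z\mapsto z\log z$ is strictly convex on $(0,\infty)$ and $\rho^{(\delta)}\rightharpoonup\rho$, $\rho^{(\delta)}\log\rho^{(\delta)}\rightharpoonup\overline{\rho\log\rho}$, this will follow (via Proposition~\ref{teo2.11}) once we prove $\overline{\rho\log\rho}=\rho\log\rho$ a.e.\ in $(0,T)\X\Omega$. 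As the integrability of $\rho$ does not exceed $L^{\gamma+\omega}$, I would not work directly with $B(z)=z\log z$ but with the truncated renormalizations $L_k(z)=z\int_1^z T_k(s)s^{-2}\,ds$ ($k>1$), which satisfy $L_k(z)=z\log z$ for $z\le k$, lie in $C[0,\infty)\cap C^1(0,\infty)$, vanish at $0$, obey $zL_k'(z)-L_k(z)=T_k(z)$ so that $(L_k,T_k)$ is an admissible pair in the definition of renormalized solutions, and are convex since $L_k''(z)=T_k'(z)/z\ge0$; moreover $|L_k(z)|\le C(1+z^{\gamma+\omega})$ uniformly in $k$.

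First I would record the renormalized continuity equation for $B=L_k$: along the sequence, $\partial_t L_k(\rho^{(\delta)})+\Div(L_k(\rho^{(\delta)})\uu^{(\delta)})+T_k(\rho^{(\delta)})\Div\uu^{(\delta)}=0$ by Theorem~\ref{4.thm1}, and in the limit $\partial_t \overline{L_k(\rho)}+\Div(\overline{L_k(\rho)}\,\uu)+\overline{T_k(\rho)\Div\uu}=0$, obtained from the convergences of Proposition~\ref{A.B.prop2}, the compact embedding $L^p(\Omega)\hookrightarrow H^{-1}(\Omega)$ and $\uu^{(\delta)}\rightharpoonup\uu$. Subtracting from the latter the renormalized identity for $(\rho,\uu)$ (valid by the renormalization lemma just proved) and integrating over $\Omega$ — the transport term drops since $\uu|_{\partial\Omega}=0$, and the initial data agree because $\rho_0^{(\delta)}\to\rho_0$ strongly — gives, for a.e.\ $t$,
\[
0\ \le\ \int_\Omega\big(\overline{L_k(\rho)}-L_k(\rho)\big)(t,\x)\,d\x\ =\ \int_0^t\!\!\int_\Omega\big(T_k(\rho)\Div\uu-\overline{T_k(\rho)\Div\uu}\big)\,d\x\,ds ,
\]
the left inequality being the pointwise convexity bound $\overline{L_k(\rho)}\ge L_k(\rho)$.

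Next I would bring in the effective viscous flux. Since $\delta\rho^\beta\to0$ strongly (Lemma~\ref{3.lemmarho}), $\overline{P(\rho)}=A\,\overline{\rho^\gamma}$, so Lemma~\ref{3.effviscflux} in the form \eqref{4.effviscflux2}, together with the strong convergence $2\mu(\rhodeta)+\lambda(\rhodeta)\to2\mu(\rhoeta)+\lambda(\rhoeta)$, yields (with $G(\rhoeta):=2\mu(\rhoeta)+\lambda(\rhoeta)$) that $\overline{T_k(\rho)\Div\uu}-\overline{T_k(\rho)}\Div\uu=\tfrac{A}{G(\rhoeta)}\big(\overline{T_k(\rho)\rho^\gamma}-\overline{T_k(\rho)}\;\overline{\rho^\gamma}\big)\ge0$, the sign coming from the comonotonicity (Chebyshev/FKG) of the non-decreasing maps $T_k$ and $z\mapsto z^\gamma$ against the Young measure of $\{\rho^{(\delta)}\}$ (Proposition~\ref{teo2.11}). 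Hence the right-hand side above is at most $\int_0^t\!\int_\Omega(T_k(\rho)-\overline{T_k(\rho)})\Div\uu\,d\x\,ds\le C\|T_k(\rho)-\overline{T_k(\rho)}\|_{L^2((0,T)\X\Omega)}$, with $C$ fixed by the energy bound on $\Div\uu$ in \eqref{5.energy}. By the oscillation defect estimate (Lemma~\ref{5.l.odm}) and weak lower semicontinuity, $\|T_k(\rho)-\overline{T_k(\rho)}\|_{L^{\gamma+1}}\le C$ uniformly in $k$, while $\|T_k(\rho)-\overline{T_k(\rho)}\|_{L^1}\to0$ as $k\to\infty$, since $T_k(\rho^{(\delta)})-\rho^{(\delta)}$ is supported on $\{\rho^{(\delta)}>k\}$, where $\int_{\{\rho^{(\delta)}>k\}}\rho^{(\delta)}\le k^{-(\gamma+\omega-1)}\!\int(\rho^{(\delta)})^{\gamma+\omega}\le Ck^{-(\gamma+\omega-1)}$ uniformly in $\delta$ (and likewise for $\rho$); interpolating ($1\le2\le\gamma+1$) gives $\|T_k(\rho)-\overline{T_k(\rho)}\|_{L^2}\to0$, hence $\int_0^T\!\int_\Omega(\overline{L_k(\rho)}-L_k(\rho))\,d\x\,dt\to0$. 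Letting $k\to\infty$ and using that $L_k(\rho)\to\rho\log\rho$ and $\overline{L_k(\rho)}\to\overline{\rho\log\rho}$ in $L^1$ — both by dominated convergence, the latter via the uniform-in-$\delta$ tail bound $\|L_k(\rho^{(\delta)})-\rho^{(\delta)}\log\rho^{(\delta)}\|_{L^1}\le Ck^{-\sigma}$ for some $\sigma>0$ — we obtain $0\le\int_0^T\!\int_\Omega(\overline{\rho\log\rho}-\rho\log\rho)=0$; combined with the a.e.\ bound $\overline{\rho\log\rho}\ge\rho\log\rho$ from convexity, this gives $\overline{\rho\log\rho}=\rho\log\rho$ a.e., whence $\rho^{(\delta)}\to\rho$ a.e.\ by Proposition~\ref{teo2.11}, and then strongly in $L^1((0,T)\X\Omega)$ by Vitali's theorem.

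The step I expect to be the main obstacle is controlling $\int_0^T\!\int_\Omega(\overline{L_k(\rho)}-L_k(\rho))$ \emph{uniformly in the truncation level $k$}: this is precisely where the bound on the oscillation defect measure (Lemma~\ref{5.l.odm}) is indispensable, and it must be combined, through interpolation, with the $k$-uniform smallness of the truncation error supplied by the improved integrability of Lemma~\ref{3.lemmarho}. The favourable sign $\overline{T_k(\rho)\Div\uu}\ge\overline{T_k(\rho)}\Div\uu$ that makes the whole estimate close is, in turn, the payoff of the effective viscous flux identity together with the monotonicity of the pressure.
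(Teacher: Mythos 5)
Your proof is correct and follows essentially the same path as the paper: show $\overline{\rho\log\rho}=\rho\log\rho$ via the truncated renormalizations $L_k$ (the paper's $B_M$), the renormalized continuity equation, the effective viscous flux identity for the favourable sign on $\overline{T_k(\rho)\Div\uu}-\overline{T_k(\rho)}\,\Div\uu$, and the oscillation-defect-measure argument to pass $k\to\infty$. You are in fact slightly more explicit than the paper about why $\|T_k(\rho)-\overline{T_k(\rho)}\|_{L^2}\to 0$, making clear that when $\gamma\le 2$ this needs Lemma~\ref{5.l.odm} together with the interpolation $1\le 2\le\gamma+1$, a point the paper leaves implicit after stating Claims \#1 and \#2.
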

 \begin{proof}
 	Notice that, from the renormalized solution property, we have that
 	$$\rho^{(\delta)} \log \rho^{(\delta)} \rightarrow \overline{\rho \log \rho} \text{ in $C([0,T];L_{weak}^p(\Omega))$ for any $1\leq p<\gamma$. }$$
 	Our goal is again to show that $\rho \log \rho = \overline{\rho \log \rho}$, which will again imply the desired conclusion according to Proposition \ref{teo2.11}.
 	
 	For $M>1$, define the ``entropies''
 	$$B_M(\zeta) = \xi\int_1^\zeta \frac{T_k(\xi)}{\xi^2}\,d\xi,$$
 	so that $b_M(\zeta)$ is an approximation of $B(\zeta) = \zeta \log \zeta$ and $\zeta (B_M)'(\zeta) - B_M(\zeta) = T_M(\zeta)$, which is an approximation of $\zeta \mapsto \zeta$. Applying $B_M$ to both $\rho^{(\delta)}$ and $\rho$ yields
 	\begin{align*}
 	\int_\Omega \big[ B_M&(\rho^{(\delta)}(t,\xx)) - B_M(\rho(t,\xx)) \big] \phi \, d\xx =  \int_\Omega \big[ B_M(\rho_0^{(\delta)}(\xx)) - B_M(\rho_0(\xx)) \big] \phi \, d\xx \\
 	&+\int_0^t \int_\Omega \big[ B_M(\rho^{(\delta)}(t',\xx)) \uu^{(\delta)} - B_M(\rho(t',\xx)) \uu \big] \cdot \nabla \phi(\xx)\, d\xx dt'\\
 	&+\int_0^t \int_\Omega \big[ T_M(\rho(t',\xx)) \Div \uu(t',\xx) - T_M(\rho^{(\delta)}(t',\xx)) \Div \uu^{(\delta)}(t',\xx) \big]\, d\xx dt',
 	\end{align*}
 	where $\phi \in C^\infty(\overline{\Omega})$ and $0<t<T$. By the same convexity argument that leads to \eqref{5.ineq.convex} we have that
 \[
 \overline{\rho^\gamma\, T_M(\rho)} \ge  \overline{\rho^{\gamma}}\, \overline{T_M(\rho)}.
 \]	
Then, taking $\phi \equiv 1$ and letting $\delta$ approach $0$, by Lemma~\ref{3.effviscflux} we have that
 	\begin{align}
 	\int_\Omega \big[ \overline{B_M(\rho)}&(t,\xx) - B_M(\rho(t,\xx)) \big]  \, d\xx \nonumber\\ =& \lim_{\delta \rightarrow 0}\int_0^t \int_\Omega \big[ T_M(\rho) \Div \uu - T_M(\rho^{(\delta)}) \Div \uu^{(\delta)} \big]\, d\xx dt' \nonumber \\
 	=& \int_0^t \int_\Omega \big[ T_M(\rho) - \overline{ T_M(\rho) } \big] \Div \uu \, d\xx dt' \nonumber \\
 	&\qquad + \int_0^t\int_\Omega \Big[ \frac{\overline{P(\rho)}\,\, \overline{T_M(\rho)} - \overline{P(\rho)T_M(\rho)} }{2\mu(\rhoeta) + \mu(\rhoeta)}\, d\xx dt' \nonumber \\
 	&\leq \int_0^t \int_\Omega \big[ T_M(\rho) - \overline{ T_M(\rho) } \big] \Div \uu \, d\xx dt'. \label{3.limitefinal}
 	\end{align}
 	To finalize the proof, we require the following two limits.
 	
 	\textit{Claim \#1:}
 	\begin{equation}
 	 	 \overline{T_M(\rho)} \rightarrow \rho \text{ in $L^p((0,T)\X\Omega)$ for any $1 \leq p < \gamma$ as $M \rightarrow \infty$.} \label{3.claim1}
 	\end{equation}

 	Indeed, notice that
 	$$\Vert \, \overline{T_M(\rho)} - \rho \,\Vert_{L^p((0,T)\X\Omega)}^p \leq \liminf \Vert \, T_M(\rho^{(\delta)}) - \rho^{(\delta)} \,\Vert_{L^p(\Omega)}^p.$$
 	On the other hand $$|T_M(\zeta) - \zeta| \leq \begin{cases} 0 &\text{ if $0 \leq \zeta < M$, and  }\\ (z-M)_+ + M  &\text{ if $M \leq \zeta$,}\end{cases}$$ so that
 	\begin{align*}
 	\Vert \, \overline{T_M(\rho)} - \rho \,\Vert_{L^p((0,T)\X\Omega)}^p &\leq \liminf \frac{1}{M^{\gamma-p}}\int_0^T\int_\Omega 1_{\rho^{(\delta)} \geq M} M^{\gamma - p} |\rho^{(\delta)}|^p\, d\xx dt \\
 	&\leq \frac{1}{M^{\gamma - p}}\liminf \int_0^T \int_\Omega |\rho^{(\delta)}|^\gamma \, d\xx dt\\
 	&\leq \frac{E T}{M^{\gamma - p}} \\ &\rightarrow 0 \text{ as $M \rightarrow \infty$},
 	\end{align*}
 	just as asserted. 
 	
 	\textit{Claim \#2:} 
 	\begin{equation}
 	\overline{B_M(\rho)} \rightarrow \overline{ \rho \log \rho } \text{ in $L^p((0,T)\X\Omega)$ for any $1 \leq p < \gamma$ as $M \rightarrow \infty$.} \label{3.claim2}
 	\end{equation}
 	
 	The argument is almost interchangeable from the one leading to \eqref{3.claim1}. Noting that $|B_M(\zeta) - \zeta \log \zeta| \leq 1_{(M,\infty)}(\zeta) \zeta \log \zeta$ for $\zeta > 0$ and $M > 1$, we see that for $p < \sigma < \gamma$,
 	\begin{align*}
 	\Vert \, \overline{B_M(\rho)} -&\overline{ \rho \log \rho } \,\Vert_{L^p((0,T)\X\Omega)}^p \leq \liminf  \Vert \, B_M(\rho^{(\delta)}) - \rho^{(\delta)} \log \rho^{(\delta)} \,\Vert_{L^p((0,T)\X\Omega)}^p\\
 	&\leq \frac{1}{ (M \log M)^{\sigma - p}}\int_0^T\int_\Omega 1_{\rho^{(\delta)} \geq M} M^{\sigma - p} |\rho^{(\delta)} \log \rho^{(\delta)}|^p\, d\xx dt \\
 	&\leq \frac{C}{(M \log M)^{\sigma - p}}\liminf \int_0^T \int_\Omega |\rho^{(\delta)}|^\gamma \, d\xx dt\\
 	&\leq \frac{C E T}{(M \log M)^{\sigma - p}} \rightarrow 0 \text{ as $M \rightarrow \infty$},
 	\end{align*}
which shows the claim.
 	
 	In virtue of both \eqref{3.claim1} and \eqref{3.claim2}, we can pass $M \rightarrow \infty$ in \eqref{3.limitefinal} to conclude that
 	$$\int_\Omega \big[ \overline{\rho \log \rho}(t,\xx) - \rho(t,\xx) \log \rho(t,\xx)\big] \, d\xx \leq 0$$
 	for any $0 \leq t \leq T$. This implies that $\overline{\rho \log \rho} = \rho \log \rho$, which, as explained in the beginning of this proof, furnishes the desired result.
 \end{proof}

The proof of Theorem~\ref{T:1.1} is thereby complete.

\begin{appendix}

 \section{On the $C([0,T]; E_\text{weak})$ spaces.}\label{A.ApdxA}
   
   Let $E$ be a Banach space and $E^*$ its dual. We denote by $E_\text{weak}^*$ the space $E^*$ endowed with the weak-$\star$ topology $\sigma(E,E^*)$. Similarly, we denote by $E_{weak}$ the space $E$ endowed with the weak topology $\sigma(E^*,E)$.
   
By $C([0,T]; E_\text{weak})$, we understand the set of functions $u : [0,T] \rightarrow E_\text{weak}$ which are continuous. Provided that $E$ is separable, then the weak-$\star$ topology is metrizable on bounded sets in $E^*$ and the space $C([0,T]; E_\text{weak}^*)$ is also metrizable on bounded sets. Thus, combining the Banach-Alaouglu theorem with the Arzel\`{a}-Ascoli theorem yields the following (see corollary 2.1 in \cite{F}).
   
   \begin{proposition} \label{A.B.prop2}
   Let $E$ be a separable Banach space. Assume that $v_n::[0,T]\to E^*$, $n=1,2,...$ is a sequence of measurable functions such that
   \[
   \sup_{t\in[0,T]}\| v_n(t)\|_{E^*}\leq M\qquad\text{uniformly in }n=1,2,...
   \]
Moreover, let the family of (real) functions
\[
\langle v_n, \Phi \rangle:t\to \langle v_n(t),\Phi\rangle,\qquad t\in [0,T], \quad n=1,2,...
\]
be equicontinuous for any fixed $\Phi$ belonging to a dense subset in the space $E$.

Then, $v_n\in C([0,T];E_{weak}^*)$ for any $n=1,2,...$, and there exists $v\in C([0,T];E_{weak}^*)$ such that 
\[
v_n\to v in C([0,T];E_{weak}^*)\text{ as }n\to\infty,
\]
passing to a subsequence as the case may be.
   \end{proposition}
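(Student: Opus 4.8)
The plan is to obtain this as the standard marriage of the Banach--Alaoglu theorem with the Arzel\`{a}--Ascoli theorem, the separability of $E$ serving only to metrize bounded balls of $E^*$ in the weak-$\star$ topology. Throughout, set $B_M=\{\xi\in E^*:\|\xi\|_{E^*}\le M\}$, which by Banach--Alaoglu is compact for $\sigma(E^*,E)$; since $E$ is separable this topology is metrizable on $B_M$, and a concrete metric is $d(\xi,\zeta)=\sum_{k\ge1}2^{-k}\,\frac{|\langle\xi-\zeta,e_k\rangle|}{1+|\langle\xi-\zeta,e_k\rangle|}$, where $(e_k)_{k\ge1}$ is a fixed countable subset of $E$ that is dense in $E$ and along which the equicontinuity hypothesis holds (such a sequence exists: take a countable dense subset of the prescribed dense subset of $E$). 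One checks at once that $d$ is a metric inducing $\sigma(E^*,E)$ on $B_M$, and — crucially — that for sequences in $B_M$ the following are equivalent: convergence in $d$, convergence of $\langle\cdot,e_k\rangle$ for every $k$, and convergence of $\langle\cdot,\Phi\rangle$ for every $\Phi\in E$; the last equivalence uses the uniform bound $\|\cdot\|_{E^*}\le M$ through an $\varepsilon/3$ approximation of $\Phi$ by the $e_k$. The same equivalences hold uniformly in a parameter.

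First I would check that $v_n\in C([0,T];E_{\mathrm{weak}}^*)$. Equicontinuity of the family $\{\langle v_n,\Phi\rangle\}_n$ forces in particular each $t\mapsto\langle v_n(t),\Phi\rangle$ to be continuous for $\Phi$ in the prescribed dense subset. For an arbitrary $\Phi\in E$, picking $e_k$ with $\|\Phi-e_k\|_E$ small and using
\[
|\langle v_n(t)-v_n(s),\Phi\rangle|\le 2M\,\|\Phi-e_k\|_E+|\langle v_n(t)-v_n(s),e_k\rangle|,
\]
one fixes $k$ first and then $|t-s|$ to see that $t\mapsto\langle v_n(t),\Phi\rangle$ is continuous for every $\Phi\in E$; equivalently $v_n:[0,T]\to E_{\mathrm{weak}}^*$ is continuous, and in fact $v_n$ ranges in the compact metric space $(B_M,d)$.

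Next I would verify that $(v_n)_n$ is equicontinuous as a family of maps into $(B_M,d)$: given $\varepsilon>0$ choose $K$ with $\sum_{k>K}2^{-k}<\varepsilon/2$, and invoke the equicontinuity hypothesis for the finitely many generators $e_1,\dots,e_K$ to find $\delta>0$ with $|\langle v_n(t)-v_n(s),e_k\rangle|<\varepsilon/2$ for all $n$, all $k\le K$, whenever $|t-s|<\delta$; then $d(v_n(t),v_n(s))<\varepsilon$ for all $n$. Since $(B_M,d)$ is compact, the pointwise-in-$t$ relative compactness of $\{v_n(t)\}_n$ is automatic, so the Arzel\`{a}--Ascoli theorem in $C([0,T];(B_M,d))$ with the sup metric yields a subsequence, not relabeled, and a limit $v\in C([0,T];(B_M,d))$ with $\sup_{t\in[0,T]}d(v_n(t),v(t))\to0$. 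By the equivalences recorded in the first paragraph, applied uniformly in $t$, we get $v\in C([0,T];E_{\mathrm{weak}}^*)$ together with $\sup_{t\in[0,T]}|\langle v_n(t)-v(t),\Phi\rangle|\to0$ for every $\Phi\in E$, which is exactly $v_n\to v$ in $C([0,T];E_{\mathrm{weak}}^*)$.

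The only steps needing genuine care — and hence what I regard as the main obstacle — are the two density reductions: upgrading continuity and equicontinuity of $\langle v_n(\cdot),\Phi\rangle$ from the prescribed dense subset to all of $E$, and upgrading equicontinuity against the countably many generators $e_k$ to equicontinuity in the metric $d$. Both are driven solely by the uniform bound $\|v_n(t)\|_{E^*}\le M$, which is precisely what makes the weak-$\star$ topology well behaved on $B_M$; once these are in place, the rest is the textbook Banach--Alaoglu / Arzel\`{a}--Ascoli argument.
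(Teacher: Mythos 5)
Your argument is correct and is a complete write-up of precisely the strategy the paper sketches (Banach--Alaoglu for compactness of $B_M$, separability of $E$ to metrize the weak-$\star$ topology on $B_M$, then Arzel\`a--Ascoli in $C([0,T];(B_M,d))$), which the paper itself does not spell out but instead attributes to Corollary 2.1 of Feireisl's book. The two density reductions you flag — upgrading pointwise continuity/equicontinuity from the dense set to all of $E$, and from the countably many $e_k$ to the metric $d$ — are handled correctly via the uniform bound, so there is no gap.
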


This result is particularly useful if the space $E$ is reflexive.

   \begin{remark}\label{A.A.remark1}
   In the text, we also encountered the space $C([0,T];C^\infty(\overline{\Omega}))$, which does not fall precisely in the hypotheses of this appendix. However, $C^\infty(\overline\Omega)$ is a Fréchet space, in particular, a metric space, so its topology is straightforward to define and it is actually metrizable. (Recall that $u_n \rightarrow u$ in $C^\infty(\overline{\Omega})$ if, and only if, $u_n \rightarrow u$ in $C^k(\overline{\Omega})$ for every $k \geq 0$).
      \end{remark}

\section{The $\Delta^{-1}\nabla $ operator and some commutator estimates involving Riesz transforms}\label{ApdxC}
   
   Let $N \geq 1$ be an integer. By the symbol $\Delta^{-1}\nabla$, we will understand the operator which maps $L^p(\mathbb{R}^N)$ into the homogeneous Sobolev space $\dot{W}^{1,p}(\mathbb{R}^N;\mathbb{R}^N)$ for any $1<p<\infty$ and is given by the formula
   $$(\Delta^{-1} \nabla f)(\xx) = \Big[ \Delta^{-1} \Big(\frac{\partial f}{\partial x_1}\Big)(\xx), \ldots, \Delta^{-1} \Big(\frac{\partial f}{\partial x_N}\Big)(\xx) \Big],$$
   for any $f \in S(\mathbb{R}^N)$; equivalently, and probably more clearly, $\Delta^{-1} \nabla$ is operator whose the Fourier multiplier is
   $$(\Delta^{-1} \nabla f)_{j}\widehat{\,}\,(\xi) = -\frac{i\xi_j}{|\xi|^2} \widehat f(\xi),$$
   where $1 \leq j \leq N$ and again $f \in S(\R^N)$.
   
  In virtue of the Sobolev inequality, 
  $$\Delta^{-1} \nabla : L^p(\mathbb{R}^N) \rightarrow L^{p*}(\mathbb{R}^N;\mathbb{R}^N) \text{ constinuously},$$ provided that $1 < p < N$, where $\frac{1}{p^*} = \frac{1}{p} - \frac{1}{N}$. Consequently, Morrey's theorem asserts that, provided that $1<q<N<p<\infty$,
  $$\Delta^{-1} \nabla: (L^q \cap L^p) (\R^N) \rightarrow (C^\alpha \cap L^\infty)(\R^N;\R^N)$$
  for $\alpha = 1 - N/p$. Observe that, in our case, we will only apply $\Delta^{-1} \nabla$ to functions supported in $\Omega$, so that many of these conclusions may be strengthened as follows.
   
   \begin{proposition} \label{A.C.1}
   Let $\Omega \subset \R^N$ be a bounded open set and, for any $1 < p < \infty$, consider $L^p(\Omega)$ as subspace of $L^p(\R^N)$ by extending its elements to be zero outside $\Omega$. Then,
  \begin{enumerate}
    \item there exists a constant $C = C(p, \Omega)$ such that
    $$\Vert \Delta^{-1} \nabla u \Vert_{W^{1,p}(\Omega)} \leq C \Vert u \Vert_{L^p(\Omega)} \text{ for any $u \in L^p(\Omega)$};$$
    \item for any $1<p<N$ and $1\leq q<\infty$ satisfying $\frac{1}{q} \geq \frac{1}{p} - \frac{1}{N}$, there exists a constant $C = C(p, q, \Omega)$ such that
    $$\Vert \Delta^{-1} \nabla u \Vert_{W^{1,q}(\Omega)} \leq C \Vert u \Vert_{L^p(\Omega)} \text{ for any $u \in L^p(\Omega)$};$$
    \item if $p > N$, there exists a constant $C = C(p, \Omega)$ such that
    $$\Vert \Delta^{-1}\nabla  u \Vert_{C^\alpha(\overline\Omega)} \leq C \Vert u \Vert_{L^p(\Omega)} \text{ for any $u \in L^p(\Omega)$},$$
    where $\alpha = 1 - N/p$.
  \end{enumerate}
   \end{proposition}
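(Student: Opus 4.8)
The plan is to realize $\Delta^{-1}\nabla$ as convolution with $\mathbf{K}:=\nabla\mathcal N$, where $\mathcal N$ is the fundamental solution of the Laplacian on $\R^N$ (so $\Delta^{-1}\nabla u=\mathbf{K}\star u$, with the usual principal-value reading of the resulting second derivatives), and to exploit throughout that $u$ is supported in the bounded set $\Omega$ while one only ever evaluates $\Delta^{-1}\nabla u$ on $\Omega$. Recall $|\mathbf{K}(x)|\le C_N|x|^{1-N}$ and $|\nabla\mathbf{K}(x)|\le C_N|x|^{-N}$ for $x\neq0$. Put $R=\diam\Omega$ and let $B_R$ be the ball of radius $R$ about the origin; then for every $x\in\Omega$ one has $(\mathbf{K}\star u)(x)=(\mathbf{K}\mathbf{1}_{B_R}\star u)(x)$, so on $\Omega$ we may replace $\mathbf{K}$ by its truncation $\mathbf{K}_R:=\mathbf{K}\mathbf{1}_{B_R}$. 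This truncation is the crux: globally $\Delta^{-1}\nabla$ sends $L^p(\R^N)$ only into $L^{p^*}(\R^N)$ (for $p<N$) or into $C^\alpha\cap L^\infty$ (for $p>N$), never back into $L^p(\R^N)$, so none of the estimates below can be inherited from a global mapping property; they must be extracted from the compact support of $u$ together with the restriction to $\Omega$, which is exactly what $\mathbf{K}\mapsto\mathbf{K}_R$ encodes. The rest is routine bookkeeping of exponents.

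For item (1), split $\|\Delta^{-1}\nabla u\|_{W^{1,p}(\Omega)}$ into the $L^p(\Omega)$-norms of $\Delta^{-1}\nabla u$ and of $\nabla\Delta^{-1}\nabla u$. The operator $\nabla\Delta^{-1}\nabla$ has matrix Fourier symbol $\xi_i\xi_j|\xi|^{-2}$ — componentwise it is $-R_iR_j$, a composition of Riesz transforms — which is a Calder\'on--Zygmund (in particular Mikhlin--H\"ormander) multiplier; hence $\|\nabla\Delta^{-1}\nabla u\|_{L^p(\R^N)}\le C(N,p)\|u\|_{L^p(\R^N)}$ for $1<p<\infty$, and restricting to $\Omega$ disposes of this term. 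For $\Delta^{-1}\nabla u$ itself, $|\mathbf{K}_R|\le C_N|\cdot|^{1-N}\mathbf{1}_{B_R}\in L^1(\R^N)$ since $1-N>-N$, so Young's convolution inequality gives $\|\Delta^{-1}\nabla u\|_{L^p(\Omega)}\le\|\mathbf{K}_R\star u\|_{L^p(\R^N)}\le\|\mathbf{K}_R\|_{L^1}\|u\|_{L^p(\Omega)}$, the constant depending only on $N$ and $\diam\Omega$. Adding the two gives (1).

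For item (2), the first-order part of $\Delta^{-1}\nabla u$ is controlled by (1) together with the continuous inclusion $W^{1,p}(\Omega)\hookrightarrow W^{1,q}(\Omega)$ on bounded $\Omega$ (the pertinent range being $q\le p$, which is automatically admissible since then $1/q\ge1/p>1/p-1/N$). For $\Delta^{-1}\nabla u=\mathbf{K}_R\star u$ itself: when $q\le p$ the $L^1$-kernel estimate of (1) and boundedness of $\Omega$ suffice, while when $p<q\le p^*$ one uses that $1<p<N$ forces $\mathbf{K}_R\in L^s(\R^N)$ for every $s<\tfrac{N}{N-1}$ — Young's inequality with $\tfrac1s=\tfrac1q-\tfrac1p+1\in(\tfrac{N-1}{N},1)$ handles $p<q<p^*$, and the endpoint $q=p^*$, i.e.\ $\tfrac1q=\tfrac1p-\tfrac1N$, is the Hardy--Littlewood--Sobolev inequality for the Riesz potential of order one. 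Thus $\Delta^{-1}\nabla u$ is bounded in $L^q(\Omega)$ for every $q$ with $\tfrac1q\ge\tfrac1p-\tfrac1N$, as required.

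For item (3), $p>N$ gives $p'<\tfrac{N}{N-1}$, hence $\mathbf{K}_R\in L^{p'}(\R^N)$, and H\"older's inequality immediately yields $\|\Delta^{-1}\nabla u\|_{L^\infty(\Omega)}\le\|\mathbf{K}_R\|_{L^{p'}}\|u\|_{L^p(\Omega)}$. For the H\"older seminorm, for $x,x'\in\Omega$ write $\Delta^{-1}\nabla u(x)-\Delta^{-1}\nabla u(x')=\int_\Omega\bigl(\mathbf{K}(x-y)-\mathbf{K}(x'-y)\bigr)u(y)\,dy$ and apply H\"older over $\Omega$; it then suffices to prove the standard kernel bound $\|\mathbf{K}(x-\cdot)-\mathbf{K}(x'-\cdot)\|_{L^{p'}(\Omega)}\le C|x-x'|^{1-N/p}$, obtained by splitting into the ``diagonal'' region $\{|x-y|+|x'-y|\le2|x-x'|\}$ — where each term is estimated via $\|\,|\cdot|^{1-N}\|_{L^{p'}(B_{c|x-x'|})}\sim|x-x'|^{1-N/p}$ — and its complement, where $|\mathbf{K}(x-y)-\mathbf{K}(x'-y)|\lesssim|x-x'|\,|x'-y|^{-N}$ by the mean value inequality and $\|\,|\cdot|^{-N}\|_{L^{p'}(\{|z|\ge c|x-x'|\})}\sim|x-x'|^{-N/p}$ (finite because $p'>1$), once more producing $|x-x'|^{1-N/p}$. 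I expect the only genuine obstacle to be the one flagged in the first paragraph — recognizing that the $L^p(\Omega)$-estimates of (1) cannot be imported from $\R^N$ and must instead come from the truncation of the Riesz-potential kernel allowed by the compact support of $u$; the remaining tools (Young's and Hardy--Littlewood--Sobolev inequalities, $L^p$-boundedness of second-order Riesz transforms, classical pointwise and H\"older estimates for Riesz-potential kernels) are entirely standard.
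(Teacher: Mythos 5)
The paper itself offers no proof of Proposition~\ref{A.C.1}; the proposition is stated in the appendix with only the preliminary remark that it follows from the Sobolev and Morrey mapping properties of $\Delta^{-1}\nabla$ together with the compact support of $u$. Your write-up is therefore a genuine fill-in rather than an alternative to an existing argument, and the mechanism you chose — replacing the Riesz-potential kernel $\mathbf{K}$ by the truncation $\mathbf{K}_R=\mathbf{K}\mathbf{1}_{B_R}$ and exploiting that only $x\in\Omega$ and $y\in\Omega$ ever occur — is exactly the right way to make precise the paper's throwaway ``in our case, we will only apply $\Delta^{-1}\nabla$ to functions supported in $\Omega$.'' Items (1) and (3), and the $L^q(\Omega)$ estimate for $\Delta^{-1}\nabla u$ inside item (2), are handled correctly by Young, Hardy--Littlewood--Sobolev, and the standard H\"older estimate for the Riesz kernel.

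There is, however, a genuine gap in item (2) which you flag in passing but never resolve. The claimed $W^{1,q}(\Omega)$ bound requires not just $\Delta^{-1}\nabla u$ in $L^q(\Omega)$ but also its gradient $\nabla\Delta^{-1}\nabla u=-\sum_{i,j}R_iR_j u$ in $L^q(\Omega)$. Your parenthetical ``(the pertinent range being $q\le p$)'' quietly restricts the gradient estimate to $q\le p$, leaving the range $p<q\le p^*$ — which is the entire point of part (2) — unaddressed. And this is not a matter of a missing trick: $R_iR_j$ is a singular integral of order zero, which does not improve local integrability even for compactly supported data. Taking $\Omega=B_1$ and $u(x)=|x|^{-a}\mathbf{1}_{B_1}(x)$ with $N/q<a<N/p$ (possible whenever $q>p$) gives $u\in L^p(\Omega)\setminus L^q(\Omega)$, and $R_iR_j u$ has the same $|x|^{-a}$ singularity at the origin, so $R_iR_j u\notin L^q(\Omega)$. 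Thus the $W^{1,q}(\Omega)$ estimate for $q>p$ cannot hold; what your argument actually proves (and what is consistent with the Sobolev embedding $W^{1,p}\hookrightarrow L^{p^*}$ invoked by the paper before the proposition, as well as with the way the result is used later — namely \eqref{2.BnFntoBFstrongC} for item (3) and the $L^2(0,T;H^1)$ bound for $\Delta^{-1}\nabla h_n$ for item (1)) is the estimate $\Vert\Delta^{-1}\nabla u\Vert_{L^q(\Omega)}+\Vert\nabla\Delta^{-1}\nabla u\Vert_{L^p(\Omega)}\le C\Vert u\Vert_{L^p(\Omega)}$. You should either prove this corrected statement or note explicitly that the $W^{1,q}$ in part (2) should read $L^q$; as it stands, the proof silently claims something slightly stronger than what it establishes, and stronger than what is true.
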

   
   An important feature of this operator $\Delta^{-1} \nabla$ is that $\nabla \Delta^{-1} \nabla$ can be seen as the matrix
   $$\nabla \Delta^{-1} \nabla = \big[ R_j R_k \big]_{1\leq j, k \leq N},$$
   where $R_j$ denotes the $j^{\text{th}}$ Riesz transform; that is, the Fourier operator with multiplier
   $$(R_j  f)\widehat{\,\,}\,(\xi) = -\frac{i\xi_j}{|\xi|} \widehat f(\xi) \quad\quad [\, f \in S(\R^N)\, ].$$
   This allows us to apply the  results by Coifman, Rochberg and Weiss \cite{CRW} and by Coifmann and Meyer  \cite{CM} on the regularity of the commutators involving Riesz transforms. The following theorem plays an important role in our work.
   
    \begin{theorem} \label{A.C.2}
      If $1\leq j, k \leq N$, and $b$ and $f \in S(\mathbb{R}^N)$, let us define the commutator
      $$\big[ b, R_j R_k \big] f(x) = b(x) (R_jR_k f)(x) - (R_j R_k (bf))(x).$$
      Then, 
      \begin{enumerate}
        \item (Coifman--Rochberg--Weiss) for $1< p < \infty$, there exists a constant $C = C(p)$ such that
        $$\big\Vert \big[b, R_jR_k \big] f\big\Vert_{L^p(\R^N)} \leq C \Vert b \Vert_{BMO(\R^N)} \Vert f \Vert_{L^p(\R^N)}; \text{ and}$$
        \item (Coifman--Meyer) if $1 < p, q, r <\infty$ with $\frac{1}{r} = \frac{1}{p} + \frac{1}{q}$, then there exists a constant $C = C(p,q,r)$ such that
        $$\Vert \nabla[b, R_jR_k] f\big\Vert_{L^r(\mathbb{R}^N)} \leq C \Vert \nabla b \Vert_{L^p(\R^N)} \Vert f \Vert_{L^q(\R^N)}.$$
      \end{enumerate}
    \end{theorem}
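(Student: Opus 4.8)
The plan is to treat the two parts separately; both are classical --- part $(1)$ is the Coifman--Rochberg--Weiss commutator theorem \cite{CRW} and part $(2)$ is due to Coifman and Meyer \cite{CM} --- so I will only sketch the arguments. Throughout I use that each $R_jR_k$ is a Calder\'on--Zygmund operator: it is bounded on $L^p(\R^N)$ for $1<p<\infty$, and off the diagonal its kernel is $K_{jk}(x-y)$ with $K_{jk}$ smooth on $\R^N\setminus\{0\}$, homogeneous of degree $-N$, and of vanishing mean over spheres (modulo a harmless constant multiple of $\delta_{jk}$ times the identity, coming from the local part of the symbol).

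For part $(1)$ the cleanest route is via the Fefferman--Stein sharp maximal function $M^{\#}$. Write $T=R_jR_k$ and, for a cube $Q\ni x$, set $b_Q=\frac{1}{|Q|}\int_Q b$. Splitting
$$[b,T]f=(b-b_Q)\,Tf-T\bigl((b-b_Q)\,f\,\mathbf 1_{3Q}\bigr)-T\bigl((b-b_Q)\,f\,\mathbf 1_{(3Q)^c}\bigr)$$
and estimating the mean oscillation of $[b,T]f$ over $Q$ term by term --- the first by H\"older and John--Nirenberg (which bounds $\bigl(\frac{1}{|Q|}\int_Q|b-b_Q|^{s'}\bigr)^{1/s'}\lesssim\|b\|_{\mathrm{BMO}}$ for every finite $s'$) together with the $L^s$-boundedness of $T$; the second likewise; and the third, with the choice $c_Q=T\bigl((b-b_Q)\,f\,\mathbf 1_{(3Q)^c}\bigr)(x)$, via the kernel's Calder\'on--Zygmund smoothness (which produces a gain $\ell(Q)\,|x-y|^{-N-1}$) and John--Nirenberg once more --- one arrives, for any $1<s<\infty$, at
$$M^{\#}\bigl([b,T]f\bigr)(x)\lesssim\|b\|_{\mathrm{BMO}}\bigl(M_s(Tf)(x)+M_s f(x)\bigr),\qquad M_s g:=\bigl(M(|g|^s)\bigr)^{1/s}.$$
Taking $L^p$ norms, using the Fefferman--Stein inequality $\|g\|_{L^p}\lesssim\|M^{\#}g\|_{L^p}$ (the a priori finiteness it requires being obtained by first assuming $b\in L^\infty$, with the resulting bound independent of $\|b\|_{L^\infty}$, and then passing to general $b\in\mathrm{BMO}$ by truncation), choosing $1<s<p$ so that $M_s$ is bounded on $L^p$, and invoking the $L^p$-boundedness of $T$, yields part $(1)$. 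One may instead run the original argument of \cite{CRW}: for $|z|$ small relative to $\|b\|_{\mathrm{BMO}}^{-1}$ the map $z\mapsto e^{zb}\,T(e^{-zb}f)$ is an analytic $L^p$-valued function, because $e^{\pm(\mathrm{Re}\,z)\,b}$ is an $A_p$ weight by John--Nirenberg, and Cauchy's formula on a small circle extracts $\tfrac{d}{dz}\big|_{z=0}=[b,T]f$ with the desired norm bound.

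For part $(2)$ the essential point is that differentiating the commutator transfers a derivative onto $b$, a gain invisible at the level of the scalar Calder\'on--Zygmund theory. Since $R_jR_k$ is a Fourier multiplier it commutes with $\partial_\ell$, hence
$$\partial_\ell[b,R_jR_k]f=(\partial_\ell b)\,(R_jR_kf)-R_jR_k\bigl((\partial_\ell b)f\bigr)+[b,R_jR_k](\partial_\ell f).$$
The first two terms lie in $L^r$ with norm $\lesssim\|\nabla b\|_{L^p}\|f\|_{L^q}$ by H\"older (using $\tfrac1r=\tfrac1p+\tfrac1q$) and the boundedness of $R_jR_k$ on $L^q$ and on $L^r$. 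For the third term I would integrate by parts in the kernel representation, moving $\partial_\ell$ off $f$ and onto $(b(x)-b(y))\,K_{jk}(x-y)$: the derivative landing on $b$ reproduces, up to sign, $R_jR_k\bigl((\partial_\ell b)f\bigr)$, already controlled, while the derivative landing on $K_{jk}$ leaves
$$\mathrm{p.v.}\!\int\bigl(b(x)-b(y)\bigr)\,(\partial_\ell K_{jk})(x-y)\,f(y)\,dy.$$
Now I would insert $b(x)-b(y)=\int_0^1\nabla b\bigl(y+s(x-y)\bigr)\cdot(x-y)\,ds$, turning this into $\int_0^1\sum_m\mathcal B_s[\partial_m b,f]\,ds$ with
$$\mathcal B_s[u,f](x):=\mathrm{p.v.}\!\int\widetilde K_{\ell m}(x-y)\,u\bigl(sx+(1-s)y\bigr)\,f(y)\,dy,\qquad \widetilde K_{\ell m}(z):=z_m\,(\partial_\ell K_{jk})(z),$$
where $\widetilde K_{\ell m}$ is smooth on $\R^N\setminus\{0\}$ and homogeneous of degree $-N$. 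For $s=0$, $\mathcal B_0[u,f]=u\cdot(\text{a CZO applied to }f)$ is handled by H\"older; for general $s\in[0,1]$ a linear change of variables recasts $\mathcal B_s$ as a bilinear Calder\'on--Zygmund operator, to which the bilinear $T1$ theorem --- or directly the Coifman--Meyer bilinear multiplier theorem, after checking the symbol bounds --- applies with operator norm bounded \emph{uniformly} in $s$; integrating in $s$ finishes part $(2)$. (Equivalently, one verifies that the bilinear Fourier multiplier of $(\nabla b,f)\mapsto\nabla[b,R_jR_k]f$ is homogeneous of degree $0$ and smooth off the three hyperplanes $\{\eta=0\}$, $\{\zeta=0\}$, $\{\eta+\zeta=0\}$, near which it has only Riesz-type singularities; a homogeneous partition of unity subordinate to which of $|\eta|,|\zeta|,|\eta+\zeta|$ is smallest then splits the estimate into pieces each covered by the bilinear H\"ormander--Mikhlin theorem.)

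The hard part is part $(2)$: one must genuinely use the cancellation $b(x)-b(y)\approx|x-y|\,\nabla b$, and doing so turns the single-variable commutator into a true bilinear singular integral whose symbol violates the classical Coifman--Meyer hypotheses along the hyperplanes $\{\eta=0\}$, $\{\zeta=0\}$, $\{\eta+\zeta=0\}$; the real work lies in resolving those (mild, Riesz-type) singularities, equivalently in proving the uniform-in-$s$ bound for the operators $\mathcal B_s$. By contrast, part $(1)$ is essentially routine once the John--Nirenberg inequality and the Fefferman--Stein $M^{\#}$-machinery are in place.
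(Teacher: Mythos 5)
The paper does not prove Theorem~\ref{A.C.2}: it is stated in the appendix as a citation of the classical results of Coifman--Rochberg--Weiss~\cite{CRW} and Coifman--Meyer~\cite{CM}, with no argument supplied. There is therefore no ``paper's own proof'' to compare against; what you have written is a self-contained sketch of results the authors simply quote.

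With that caveat, your sketch is essentially sound. For part~(1) both routes you describe (Fefferman--Stein sharp maximal estimate and the analytic-family/Cauchy-formula trick) are standard and correct; the pointwise bound $M^{\#}\bigl([b,T]f\bigr)\lesssim\|b\|_{\mathrm{BMO}}\bigl(M_s(Tf)+M_sf\bigr)$ together with Fefferman--Stein is exactly the modern proof of CRW. For part~(2), the Leibniz identity
\[
\partial_\ell[b,T]f=(\partial_\ell b)(Tf)-T\bigl((\partial_\ell b)f\bigr)+[b,T](\partial_\ell f)
\]
is correct, the first two terms are indeed dispatched by H\"older and the $L^q$- and $L^r$-boundedness of $T=R_jR_k$, and the integration by parts on the third term, followed by the first-order Taylor expansion $b(x)-b(y)=\int_0^1\nabla b(y+s(x-y))\cdot(x-y)\,ds$, is precisely the reduction to bilinear singular integrals at the heart of~\cite{CM}. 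Two small imprecisions: first, at $s=0$ you actually get $\mathcal B_0[u,f]=\widetilde T(uf)$, not $u\cdot\widetilde T f$ (the latter is the $s=1$ endpoint); both are handled by H\"older plus $L^q$-boundedness of a Calder\'on--Zygmund operator, so the conclusion is unaffected, but the labeling is swapped. Second, the integration by parts over $\{|x-y|>\epsilon\}$ produces a boundary term that does not vanish as $\epsilon\to 0$: it converges to a finite constant multiple of $(\partial_m b)(x)f(x)$, which is again controlled by H\"older and should be acknowledged. Finally, the kernel $\widetilde K_{\ell m}(z)=z_m(\partial_\ell K_{jk})(z)$ need not have vanishing spherical mean, so the principal-value representation of the corresponding operator may differ from the multiplier operator by a bounded constant times the identity; this does not affect the estimates but deserves a word. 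Subject to these details, the sketch correctly identifies the uniform-in-$s$ bilinear bound as the genuine difficulty, and is a faithful outline of the Coifman--Meyer argument.
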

    
    Another crucial and related result is the result which is a particular case of the the div--curl lemma (see \cite{F}, corollary 6.1).

     \begin{theorem} \label{A.C.3}
      Assume that $\Omega \subset \R^N$ is an open set and $1<p,q, r < \infty$ satisfy
      $$ \frac{1}{p} + \frac{1}{q} = \frac{1}{r}.$$
      Moreover, let $(f_n)$ be a sequence in $L^p(\Omega)$ and $(v_n)$ be a sequence in $L^q(\Omega;\R^N)$ for which
      \begin{align*}
        f_n \rightharpoonup f &\text{ weakly in $L^p(\Omega)$, and} \\
        v_n \rightharpoonup v &\text{ weakly in $L^q(\Omega;\R^N)$}.
      \end{align*}
      Then
      \begin{align*}
        (\nabla \Delta^{-1} \nabla)(f_n) &v_n - (\nabla \Delta^{-1} \Div)(v_n) f_n \\
        &\rightharpoonup   (\nabla \Delta^{-1} \nabla)(f) v - (\nabla \Delta^{-1} \Div)(v) f \text{ weakly in $L^r(\Omega;\R^N)$}.
      \end{align*}
     \end{theorem}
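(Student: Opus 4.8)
The plan is to prove first that the stated convergence holds in the sense of distributions on $\Omega$, and then to upgrade it to weak convergence in $L^r(\Omega;\R^N)$. The upgrade is automatic: since $f_n\rightharpoonup f$ and $v_n\rightharpoonup v$ are bounded in $L^p$ and $L^q$, and since $\nabla\Delta^{-1}\nabla$ (a matrix of second-order Riesz transforms) and $\nabla\Delta^{-1}\Div$ are bounded on $L^p$ and $L^q$ respectively, H\"older's inequality together with $\tfrac1p+\tfrac1q=\tfrac1r$ shows that $\{(\nabla\Delta^{-1}\nabla f_n)v_n-(\nabla\Delta^{-1}\Div v_n)f_n\}$ is bounded in $L^r(\Omega;\R^N)$; as $1<r<\infty$ this space is reflexive, so a bounded sequence converging in $\mathcal D'(\Omega)$ converges weakly in $L^r(\Omega;\R^N)$ to the same limit.

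For the distributional convergence the key is an algebraic identity that rewrites the expression as a sum of derivatives of products in which one factor converges strongly. Extend $f_n,v_n$ by zero to $\R^N$ and introduce the potentials $\mathbf A_n:=\Delta^{-1}\nabla f_n$ (defined up to an additive constant) and $\Psi_n:=\Delta^{-1}\Div v_n$, so that $\nabla\Delta^{-1}\nabla f_n=\nabla\mathbf A_n$, $\Div\mathbf A_n=f_n$, $\operatorname{curl}\mathbf A_n=0$, $\nabla\Delta^{-1}\Div v_n=\nabla\Psi_n$ and $\Delta\Psi_n=\Div v_n$. Exploiting $\operatorname{curl}\mathbf A_n=0$ (which turns $\sum_k\partial_j(\mathbf A_n)_k\,(v_n)_k$ into $(v_n\cdot\nabla)(\mathbf A_n)_j$) and repeated use of the product rule, I would derive — first for smooth $f_n,v_n$ by direct computation, then in $\mathcal D'(\R^N)$ by mollification —
\begin{multline*}
(\nabla\Delta^{-1}\nabla f_n)v_n-(\nabla\Delta^{-1}\Div v_n)f_n \\
=\Div\bigl(\mathbf A_n\otimes(v_n-\nabla\Psi_n)-\nabla\Psi_n\otimes\mathbf A_n\bigr)+\nabla(\mathbf A_n\cdot\nabla\Psi_n),
\end{multline*}
with $(\mathbf u\otimes\mathbf w)_{jk}=\mathbf u_j\mathbf w_k$ and $[\Div\mathbf T]_j=\sum_k\partial_k\mathbf T_{jk}$; note that $v_n-\nabla\Psi_n$ is exactly the divergence-free part of $v_n$. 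The very same identity holds with $f$, $v$, $\mathbf A:=\Delta^{-1}\nabla f$, $\Psi:=\Delta^{-1}\Div v$ in place of $f_n$, $v_n$, $\mathbf A_n$, $\Psi_n$.

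It then remains to pass to the limit on the right-hand side. Fix a ball $B\Subset\Omega$ and normalise $\mathbf A_n$ to have zero mean on $B$; by $L^p$-continuity of $\nabla\Delta^{-1}\nabla$ and the Poincar\'e inequality, $\{\mathbf A_n\}$ is bounded in $W^{1,p}(B)$, and $f_n\rightharpoonup f$ gives $\mathbf A_n\rightharpoonup\mathbf A$ weakly there. Since $\tfrac1p+\tfrac1q=\tfrac1r\le1$ forces $q'\le p<p^*$ (with the obvious modifications when $p\ge N$), Rellich--Kondrachov yields $\mathbf A_n\to\mathbf A$ strongly in $L^{q'}(B)$. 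On the other hand $v_n\rightharpoonup v$ in $L^q(\Omega)$ and, by continuity of $\nabla\Delta^{-1}\Div$ on $L^q$, also $\nabla\Psi_n\rightharpoonup\nabla\Psi$ in $L^q(\Omega)$. Hence $\mathbf A_n\otimes v_n$, $\mathbf A_n\otimes\nabla\Psi_n$, $\nabla\Psi_n\otimes\mathbf A_n$ and $\mathbf A_n\cdot\nabla\Psi_n$ are each products of a locally strongly convergent factor by a weakly convergent one, hence converge weakly (in $L^r_{\mathrm{loc}}$, in particular in $\mathcal D'$) to the corresponding objects built from $\mathbf A$, $v$, $\nabla\Psi$. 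Applying $\Div$ and $\nabla$ (continuous on $\mathcal D'$) and using the identity for the limit functions, one obtains $(\nabla\Delta^{-1}\nabla f_n)v_n-(\nabla\Delta^{-1}\Div v_n)f_n\to(\nabla\Delta^{-1}\nabla f)v-(\nabla\Delta^{-1}\Div v)f$ in $\mathcal D'(\Omega)$, and the $L^r$-bound of the first paragraph completes the argument. (In effect this is the div--curl lemma in the present setting: the identity makes the compensated-compactness cancellation explicit, so no appeal to an abstract version is needed.)

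The step I expect to be the main obstacle is the algebraic identity of the second paragraph: beyond the bookkeeping of the tensor and divergence conventions, one must justify the integrations by parts when $f_n\in L^p$ and $v_n\in L^q$ are merely integrable, for then $\mathbf A_n\in W^{1,p}_{\mathrm{loc}}$, $\Psi_n\in W^{1,q}_{\mathrm{loc}}$ and the products live only in $L^r_{\mathrm{loc}}$ — this is dealt with by establishing the identity for smooth data and passing to the limit in the relevant topologies. A secondary, routine point is checking that the exponent relation leaves enough Sobolev room ($q'<p^*$) for the Rellich compactness of the potential, together with the straightforward adjustments of that step when $p=N$ or $p>N$.
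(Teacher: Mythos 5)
Your argument is correct. The paper itself offers no proof of Theorem~\ref{A.C.3}; it simply invokes Corollary~6.1 of \cite{F}, where the statement is obtained as a consequence of an abstract div--curl (compensated compactness) lemma. You instead give a self-contained proof, and the heart of it is the algebraic identity
\begin{equation*}
(\nabla\Delta^{-1}\nabla f_n)v_n-(\nabla\Delta^{-1}\Div v_n)f_n
=\Div\bigl(\mathbf A_n\otimes(v_n-\nabla\Psi_n)-\nabla\Psi_n\otimes\mathbf A_n\bigr)+\nabla(\mathbf A_n\cdot\nabla\Psi_n),
\end{equation*}
which I checked componentwise: with $(\nabla\mathbf A_n)_{jk}=\partial_j\partial_k\Delta^{-1}f_n$ symmetric and $\Div(v_n-\nabla\Psi_n)=0$, the cross terms cancel exactly and the remainder is the left-hand side. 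This identity makes the compensated-compactness cancellation explicit, so that the distributional limit follows from ``strong local times weak'' convergence of products rather than from an abstract Murat--Tartar argument. The exponent bookkeeping is also sound: $1/p+1/q=1/r<1$ gives $q'<p$, and since $p<p^*$ (with the usual modification when $p\ge N$) Rellich--Kondrachov yields $\mathbf A_n\to\mathbf A$ strongly in $L^{q'}_{\mathrm{loc}}$ after normalising the additive constant, while $v_n$, $\nabla\Psi_n$ stay weakly convergent in $L^q$; hence each product on the right converges in $\mathscr D'$, and the uniform $L^r$ bound from Calder\'on--Zygmund plus H\"older upgrades $\mathscr D'$ convergence to weak $L^r(\Omega;\R^N)$ convergence by reflexivity. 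In short: your proof is correct, arguably more elementary and more transparent than a black-box appeal to the div--curl lemma, at the price of having to discover and verify the divergence-form identity and to justify it for rough data via mollification (both of which you correctly flag as the only nontrivial steps). Either route is legitimate; yours has the pedagogical advantage of exhibiting the cancellation mechanism directly.
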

     
\section{Weak convergence and convexity}

Let us state without proof the following general result which proved itself be very useful to show strong convergence of the sequences of densities considered in this text (see \cite[Theorem 2.11]{F}).
\begin{lemma}\label{teo2.11}
Let $O\subseteq \mathbb{R}^N$ be a measurable set and $\{ \mathbf{v}_n\}_{n=1}^\infty$ a sequence of functions in $L^1(O;\mathbb{R}^M)$ such that
\[
\mathbf{v}_n\to\mathbf{v} \text{ weakly in }  L^1(O;\mathbb{R}^M).
\]
Let $\Phi:\mathbb{R}^M\to (-\infty,\infty]$ be a lower semi-continuous convex function such that $\Phi(\mathbf{v}_n)\in L^1(O)$ for any $n$ and
\[
\Phi(\mathbf{v}_n)\to\overline{\Phi(\mathbf{v})} \text{ weakly in } L^1(O).
\]

Then,
\[
\Phi(\mathbf{v})\leq \overline{\Phi(\mathbf{v})} \text{ a.a. on } O.
\]

If, moreover, $\Phi$ is strictly convex on an open convex set $U\subseteq \mathbb{R}^M$, and
\[
\Phi(\mathbf{v})=\overline{\Phi(\mathbf{v})} \text{ a.a. on }O,, 
\]
then,
\[
\mathbf{v}_n(\mathbf{y})\to\mathbf{v}(\mathbf{y}) \text{ for a.e. } \mathbf{y}\in\{ \mathbf{y}\in O:\mathbf{v}(\mathbf{y})\in U\},
\]
extracting a subsequence as the case may be.
\end{lemma}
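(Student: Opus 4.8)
The plan is to prove the two assertions separately; both are elementary consequences of convexity that reduce to pointwise statements, and no deep functional-analytic machinery is needed beyond Mazur's lemma.

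For the first assertion (a Jensen-type lower semicontinuity) I would apply Mazur's lemma to the joint sequence $(\mathbf{v}_n,\Phi(\mathbf{v}_n))$, which converges weakly in $L^1(O;\mathbb{R}^{M+1})$ to $(\mathbf{v},\overline{\Phi(\mathbf{v})})$ by hypothesis. This produces finite convex combinations $\widetilde{\mathbf{v}}_k=\sum_{j\geq k}\theta_j^{(k)}\mathbf{v}_j$ and $\widetilde{\Phi}_k=\sum_{j\geq k}\theta_j^{(k)}\Phi(\mathbf{v}_j)$, with the \emph{same} weights, converging strongly in $L^1$ to $\mathbf{v}$ and $\overline{\Phi(\mathbf{v})}$ respectively; passing to a subsequence, both converge a.e. Convexity of $\Phi$ gives $\Phi(\widetilde{\mathbf{v}}_k)\leq\widetilde{\Phi}_k$ pointwise, while lower semicontinuity gives $\Phi(\mathbf{v})\leq\liminf_k\Phi(\widetilde{\mathbf{v}}_k)$; combining, $\Phi(\mathbf{v})\leq\lim_k\widetilde{\Phi}_k=\overline{\Phi(\mathbf{v})}$ a.e. In the equality case this same squeeze forces $\Phi(\widetilde{\mathbf{v}}_k)\to\Phi(\mathbf{v})$ a.e., but since this only concerns the combinations $\widetilde{\mathbf{v}}_k$ and not the original sequence, the second assertion requires the sharper argument below.

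For the second assertion I would work with a Bregman-type remainder. Fix a measurable, locally bounded selection $\xi(\cdot)$ of $\partial\Phi$ on the open set $U$ (it exists since $\Phi$ is finite and convex, hence locally Lipschitz, there), so that $y\mapsto\xi(\mathbf{v}(y))$ is measurable on $\{\mathbf{v}\in U\}$, and set
\[
g_n(y):=\Phi(\mathbf{v}_n(y))-\Phi(\mathbf{v}(y))-\xi(\mathbf{v}(y))\cdot(\mathbf{v}_n(y)-\mathbf{v}(y))\geq 0,
\]
the nonnegativity being the subgradient inequality. On any subset $O_R\subseteq\{\mathbf{v}\in U\}$ on which $\mathbf{v}$ takes values in a compact subset of $U$ (and of finite measure, should $|O|=\infty$), the function $\xi(\mathbf{v})$ is bounded, so $\int_{O_R}\xi(\mathbf{v})\cdot(\mathbf{v}_n-\mathbf{v})\,dy\to 0$ by weak $L^1$ convergence; combined with $\int_{O_R}\Phi(\mathbf{v}_n)\,dy\to\int_{O_R}\overline{\Phi(\mathbf{v})}\,dy$ and the hypothesis $\overline{\Phi(\mathbf{v})}=\Phi(\mathbf{v})$, this gives $\int_{O_R}g_n\,dy\to 0$, hence $g_n\to 0$ in $L^1(O_R)$ and, along a subsequence, a.e. on $O_R$. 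Exhausting $\{\mathbf{v}\in U\}$ by such sets and diagonalizing, one obtains $g_n\to 0$ a.e. on $\{\mathbf{v}\in U\}$. A technical point worth isolating here is precisely this localization, which is what makes $\xi(\mathbf{v})$ a legitimate $L^\infty$ test function against the weakly convergent sequence and keeps all integrals finite.

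The decisive, and hardest, step is then purely pointwise: at a point $y$ with $z_0:=\mathbf{v}(y)\in U$ for which $\Psi(\mathbf{v}_n(y))\to 0$, where $\Psi(z):=\Phi(z)-\Phi(z_0)-\xi(z_0)\cdot(z-z_0)$ is convex, nonnegative and vanishes only at $z_0$ (strict convexity on $U$ prevents $\Psi$ from vanishing along a nondegenerate segment), one must conclude $\mathbf{v}_n(y)\to z_0$. The difficulty is that weak $L^1$ convergence gives no control whatsoever on the values $\mathbf{v}_n(y)$, so one must rule out escape of $\mathbf{v}_n(y)$ to infinity or toward $\partial U$ while the convex remainder still tends to zero. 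I would argue by contradiction via segment slicing: if $|\mathbf{v}_{n_j}(y)-z_0|\geq\delta$ along a subsequence, choose $\epsilon\leq\delta$ with $\overline{B(z_0,\epsilon)}\subset U$, let $w_j$ be the point of the segment $[z_0,\mathbf{v}_{n_j}(y)]$ at distance $\epsilon$ from $z_0$, and use convexity to get $\Psi(w_j)\leq\frac{\epsilon}{|\mathbf{v}_{n_j}(y)-z_0|}\,\Psi(\mathbf{v}_{n_j}(y))\to 0$; but $(w_j)$ lies on the compact sphere $\partial B(z_0,\epsilon)\subset U$, on which $\Psi$ is continuous and strictly positive, hence bounded below by a positive constant — a contradiction. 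This yields $\mathbf{v}_n(y)\to\mathbf{v}(y)$ for a.e. $y\in\{\mathbf{v}\in U\}$, completing the proof.
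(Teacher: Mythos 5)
The paper does not prove this lemma: it is stated without proof, with a citation to Theorem~2.11 in Feireisl's book \cite{F}, so there is no in-paper argument to compare yours against. Judged on its own terms, your proposal is correct and complete.

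A few remarks on the structure. For the first assertion, your Mazur-lemma argument on the joint sequence $(\mathbf{v}_n,\Phi(\mathbf{v}_n))$ in $L^1(O;\R^{M+1})$ is a standard and entirely valid alternative to the other common route, which represents $\Phi$ as the supremum of a countable family of affine minorants (Fenchel biconjugate) and passes each affine inequality through the weak limit; both yield $\Phi(\mathbf{v})\le\overline{\Phi(\mathbf{v})}$ a.e.\ with roughly the same effort. For the second assertion, your Bregman-remainder device $g_n=\Phi(\mathbf{v}_n)-\Phi(\mathbf{v})-\xi(\mathbf{v})\cdot(\mathbf{v}_n-\mathbf{v})\ge 0$, localized to sets of finite measure on which $\mathbf{v}$ is compactly valued in $U$ (so that $\xi(\mathbf{v})$ is a legitimate bounded test function against weak $L^1$ convergence), correctly reduces the problem to the pointwise implication $\Psi(\mathbf{v}_n(y))\to 0 \Rightarrow \mathbf{v}_n(y)\to\mathbf{v}(y)$, and you are right that this pointwise step is where the actual work lies. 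Your segment-slicing argument — projecting $\mathbf{v}_{n_j}(y)$ radially onto $\partial B(z_0,\epsilon)\subset U$ and using convexity of $\Psi$ together with $\Psi(z_0)=0$ to contradict $\min_{\partial B(z_0,\epsilon)}\Psi>0$ — is sound, and it correctly sidesteps the fact that $\Psi$ might vanish outside $U$: only its strict positivity on a compact sphere inside $U$ is ever used. The passage to subsequences via exhaustion and diagonalization is also in order and matches the ``extracting a subsequence as the case may be'' caveat in the statement. No gaps.
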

\end{appendix}

\end{document}